\theoremstyle{plain} 
\newtheorem{theorem}{\indent\sc Theorem}[section]
\newtheorem{lemma}[theorem]{\indent\sc Lemma}
\newtheorem{corollary}[theorem]{\indent\sc Corollary}
\newtheorem{proposition}[theorem]{\indent\sc Proposition}
\theoremstyle{definition} 
\newtheorem{definition}[theorem]{\indent\sc Definition}
\newtheorem{remark}[theorem]{\indent\sc Remark}
\newcommand\on{\operatorname}
\newcommand\Ric{\on{Ric}}
\newcommand\scal{\on{scal}}
\title{$\alpha$-connections in generalized geometry}
\author{Adara M. Blaga and Antonella Nannicini}
\date{}
\begin{document}

\maketitle

\markboth{{\small\it {\hspace{4cm} $\alpha$-connections in generalized geometry}}}{\small\it{$\alpha$-connections in generalized geometry
\hspace{4cm}}}

\footnote{ 
2010 \textit{Mathematics Subject Classification}.
53C15, 53B05, 53D05.
}
\footnote{ 
\textit{Key words and phrases}.
Quasi-statistical structures, generalized geometry, calibrated geometries.
}

\begin{abstract}
We consider a family of $\alpha$-connections defined by a pair of generalized dual quasi-statistical connections $(\hat{\nabla},\hat{\nabla}^*)$ on the generalized tangent bundle $(TM\oplus T^*M, \check{h})$ and determine their curvature, Ricci curvature and scalar curvature.
Moreover, we provide the necessary and sufficient condition for $\hat \nabla^*$ to be an equiaffine connection and we prove that if $h$ is symmetric and $\nabla h=0$, then $(TM\oplus T^*M, \check{h}, \hat{\nabla}^{(\alpha)}, \hat{\nabla}^{(-\alpha)})$ is a conjugate Ricci-symmetric manifold. Also, we characterize the integrability of a generalized almost product, of a generalized almost complex and of a generalized metallic structure w.r.t. the bracket defined by the $\alpha$-connection. Finally we study $\alpha$-connections defined by the twin metric of a pseudo-Riemannian manifold, $(M,g)$, with a non-degenerate $g$-symmetric $(1,1)$-tensor field $J$ such that $d^\nabla J=0$, where $\nabla$ is the Levi-Civita connection of $g$.
\end{abstract}

\bigskip

\section{Introduction}

A bridge between Differential Geometry, Information Geometry and Theoretical Physics, statistical manifolds were firstly considered by Amari \cite{am}.
Geometrical aspects of statistical structures such as invariance, properties of submanifolds in statistical manifolds etc. have been lately studied. Statistical manifolds are also connected to Hessian manifolds \cite{sh}. Basically, a statistical structure on a smooth manifold $M$ consists of a pseudo-Riemannian metric $g$ and a torsion-free affine connection $\nabla$ such that $\nabla g$ is a Codazzi tensor field. To every statistical structure $(g,\nabla)$, one can naturally associate a dual statistical structure $(g,\nabla^*)$. Such a dualistic pair $(g;\nabla,\nabla^*)$ defines a family of connections, called $\alpha$-connections \cite{am}, which plays a significant role in Information Geometry.\\

In the larger framework of Generalized Geometry, we consider a family of $\alpha$-connections defined by a pair of generalized dual quasi-statistical connections $(\hat{\nabla},\hat{\nabla}^*)$ on the generalized tangent bundle $(TM\oplus T^*M, \check{h})$ and determine their curvature, Ricci curvature and scalar curvature.
Moreover, we provide the necessary and sufficient condition for $\hat \nabla^*$ to be an equiaffine connection and we prove that if $h$ is symmetric and $\nabla h=0$, then $(TM\oplus T^*M, \check{h}, \hat{\nabla}^{(\alpha)}, \hat{\nabla}^{(-\alpha)})$ is a conjugate Ricci-symmetric manifold. Also, we characterize the integrability of a generalized almost product, of a generalized almost complex and of a generalized metallic structure with respect to the bracket defined by the $\alpha$-connection, finding conditions under which the concept of integrability is $\alpha$-invariant. In the last section, we focus on $\alpha$-connections defined by the twin metric of a pseudo-Riemannian manifold, $(M,g)$, with a non-degenerate $g$-symmetric $(1,1)$-tensor field $J$ such that $d^\nabla J=0$, where $\nabla$ is the Levi-Civita connection of $g$.

\section{Preliminaries}

\subsection{Quasi-statistical structures}

Let $M$ be a smooth manifold and let $TM$ be its tangent bundle. Let $g$ be a pseudo-Riemannian metric and let $\nabla$ be a torsion-free affine connection on $M$. $(g,\nabla)$ is called a \textit{statistical structure} on $M$ if $$({\nabla}_Xg)(Y,Z)=({\nabla}_Yg)(X,Z),$$ for any $X,Y,Z\in C^{\infty}(TM)$.

We can extend this definition and say that $(h,\nabla)$ is a \textit{quasi-statistical structure} on $M$ if $h$ is a non-degenerate $(0,2)$-tensor field, $\nabla$ is an affine connection with torsion tensor $T^\nabla$ and $d^{\nabla}h=0$, where $$(d^{\nabla}h)(X,Y,Z):=({\nabla}_Xh)(Y,Z)-({\nabla}_Yh)(X,Z)+h(T^{\nabla}(X,Y),Z),$$ for any $X,Y,Z\in C^{\infty}(TM)$.

\subsection{Dualistic structures}

Let $M$ be a smooth manifold and let $h$ be a non-degenerate $(0,2)$-tensor field on $M$.
\begin{definition} Two affine connections $\nabla$ and $\nabla^*$ on $M$ are said to be \textit{dual connections} with respect to  $h$ if
$$X(h(Y,Z))=h(\nabla_XY,Z)+h(Y,\nabla^*_XZ),$$
for any $X,Y,Z\in C^{\infty}(TM)$ and we call $(h;\nabla,\nabla^*)$ a \textit{dualistic structure}.
\end{definition}

\begin{lemma} If $h$ is a non-degenerate symmetric or skew-symmetric $(0,2)$-tensor field on $M$ and $\nabla$ is an affine connection on $M$, then the dual connection $\nabla^*$ satisfies:
$$\nabla^*_XY=\nabla_XY+h^{-1}((\nabla_Xh)(Y)), \ \ \nabla^*_X\beta=\nabla_X\beta-(\nabla_Xh)(h^{-1}(\beta)),$$
for any $X,Y\in C^{\infty}(TM)$ and $\beta \in C^{\infty}(T^*M)$.

Moreover, if $(h,\nabla)$ is a quasi-statistical structure, then the dual connection $\nabla^*$ satisfies:
$$T^{\nabla^*}=0, \ \ \nabla^* h=-\nabla h,$$
therefore, $(h,\nabla^*)$ is always a statistical structure.
\end{lemma}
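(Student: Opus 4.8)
The plan is to treat the two displayed formulas for $\nabla^*$ as the computational backbone and then read off the torsion and the covariant derivative of $h$ directly from them. I would start from the defining relation $X(h(Y,Z))=h(\nabla_XY,Z)+h(Y,\nabla^*_XZ)$ and rewrite its left-hand side through the covariant derivative of $h$, namely $X(h(Y,Z))=(\nabla_Xh)(Y,Z)+h(\nabla_XY,Z)+h(Y,\nabla_XZ)$. Substituting and cancelling the common term $h(\nabla_XY,Z)$ leaves $h(Y,\nabla^*_XZ-\nabla_XZ)=(\nabla_Xh)(Y,Z)$ for every $Y$. Here I would use that $\nabla_Xh$ inherits the symmetry (resp.\ skew-symmetry) of $h$: transposing the two arguments turns the right-hand side into $\pm(\nabla_Xh)(Z,Y)$ and simultaneously turns the left-hand side into $\pm h(\nabla^*_XZ-\nabla_XZ,Y)$, the two signs matching in both cases. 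Non-degeneracy of $h$ then lets me invert and obtain $\nabla^*_XZ-\nabla_XZ=h^{-1}((\nabla_Xh)(Z))$, which is the first formula.

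For the action on $1$-forms I would use the Leibniz extension $(\nabla^*_X\beta)(Y)=X(\beta(Y))-\beta(\nabla^*_XY)$, insert the vector-field formula just obtained, and compare with $(\nabla_X\beta)(Y)$. The only point requiring care is the pairing identity $\beta(h^{-1}((\nabla_Xh)(Y)))=((\nabla_Xh)(h^{-1}(\beta)))(Y)$, which again follows by writing $\beta=h(h^{-1}(\beta),\cdot)$ and invoking the (skew-)symmetry of both $h$ and $\nabla_Xh$. This yields $\nabla^*_X\beta=\nabla_X\beta-(\nabla_Xh)(h^{-1}(\beta))$.

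With the explicit vector-field formula in hand, the torsion is immediate: subtracting the analogous expression with $X,Y$ interchanged gives $T^{\nabla^*}(X,Y)=T^{\nabla}(X,Y)+h^{-1}\bigl((\nabla_Xh)(Y)-(\nabla_Yh)(X)\bigr)$. At this stage I would invoke the hypothesis $d^{\nabla}h=0$, which says precisely that the $1$-form $(\nabla_Xh)(Y)-(\nabla_Yh)(X)$ equals $-h(T^{\nabla}(X,Y),\cdot)$; applying $h^{-1}$ turns the correction term into $-T^{\nabla}(X,Y)$, which cancels the torsion of $\nabla$, so that $T^{\nabla^*}=0$. For $\nabla^*h$ I would feed the duality relation back into $(\nabla^*_Xh)(Y,Z)=X(h(Y,Z))-h(\nabla^*_XY,Z)-h(Y,\nabla^*_XZ)$; the term $h(Y,\nabla^*_XZ)$ cancels against $X(h(Y,Z))$ by duality, leaving $h(\nabla_XY-\nabla^*_XY,Z)=-(\nabla_Xh)(Y,Z)$, i.e.\ $\nabla^*h=-\nabla h$. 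These two identities are exactly the properties recorded in the statement, and torsion-freeness of $\nabla^*$ together with $\nabla^*h=-\nabla h$ exhibits $(h,\nabla^*)$ as the asserted statistical structure.

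I expect the main obstacle to be purely bookkeeping rather than conceptual: keeping the musical isomorphism $h^{-1}$ and the slot conventions consistent throughout, and in particular verifying that the symmetric and skew-symmetric cases can be handled simultaneously, because in both the sign produced by transposing the arguments of $h$ matches the sign produced by transposing those of $\nabla_Xh$. Beyond the defining identity of the dual connection and the single use of $d^{\nabla}h=0$, no genuinely new idea is needed.
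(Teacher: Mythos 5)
Your proposal follows essentially the same route as the paper: derive both formulas for $\nabla^*$ directly from the duality relation using non-degeneracy and the fact that $\nabla_Xh$ inherits the (skew-)symmetry of $h$, then read off $T^{\nabla^*}=0$ from $d^{\nabla}h=0$ and compute $\nabla^*h=-\nabla h$ by substituting back into the definition (your use of the duality relation to cancel $h(Y,\nabla^*_XZ)$ is only a cosmetic shortcut relative to the paper's explicit substitution). The sign bookkeeping you flag is exactly the point the paper handles with its $\pm$ cases, so the argument is correct and complete to the same degree as the paper's own proof.
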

\begin{proof}  From $X(h(Y,Z))=h(\nabla_XY,Z)+h(Y,\nabla^*_XZ),$
we get:
$$({\nabla_X}h)(Y,Z)=X(h(Y,Z))-h(\nabla_XY,Z)-h(Y,\nabla_XZ)=$$
$$=h(\nabla_XY,Z)+h(Y,\nabla^*_XZ)-h(\nabla_XY,Z)-h(Y,\nabla_XZ)=$$
$$=h(Y,\nabla^*_XZ-\nabla_XZ),$$
for any $X,Y,Z\in C^{\infty}(TM).$
Therefore:
$$h(\nabla^*_XZ-\nabla_XZ)=({\nabla_X}h)(Z),$$
for any $X,Z\in C^{\infty}(TM)$. Then the first statement.

Let us compute $ \nabla^*_X\beta$. We get:
$$ (\nabla^*_X\beta)(Y)=X(\beta (Y))-\beta(\nabla^*_XY)=X(\beta (Y))-\beta (\nabla_X Y)-\beta ( h^{-1}((\nabla_Xh)(Y)))=$$
$$= (\nabla_X\beta)(Y)-\beta ( h^{-1}((\nabla_Xh)(Y))).$$

Let us suppose $\beta=h(Z)$ for some $Z\in C^{\infty}(TM)$. Then we obtain:
$$\beta ( h^{-1}((\nabla_Xh)(Y)))=h(Z)( ( h^{-1}((\nabla_Xh)(Y))))=h(Z, ( h^{-1}((\nabla_Xh)(Y))))=$$
$$=\pm h(( h^{-1}((\nabla_Xh)(Y)),Z)=\pm ((\nabla_Xh)(Y))(Z)=\pm ((\nabla_Xh)(Y))(h^{-1}(\beta))=$$
$$=\pm((\nabla_Xh)(Y,h^{-1}(\beta))=(\nabla_Xh)(h^{-1}(\beta),Y)=((\nabla_Xh)(h^{-1}(\beta)))(Y),$$
where the sign $+$ is for $h$ symmetric and $ - $ for $h$ skew-symmetric. Then the second statement is proved.

Now let us suppose that $(h,\nabla)$ is a quasi-statistical structure and let $T^*$ be the torsion of the dual connection $\nabla^*$,
$$T^*(X,Y):=\nabla^*_X Y-\nabla^*_Y X-[X,Y].$$
We have:
$$T^*(X,Y)={\nabla}_X Y-{\nabla}_Y X-[X,Y]+h^{-1}((\nabla_X h)(Y)-(\nabla_Y h)(X))=$$
$$=T^{\nabla}(X,Y)-h^{-1}(h(T^{\nabla}(X,Y)))=0.$$
Finally:
$$({{\nabla}^*_X}h)(Y,Z)=X(h(Y,Z))-h({\nabla}^*_XY,Z)-h(Y,{\nabla}^*_XZ)=$$
$$=({\nabla_X}h)(Y,Z)-({\nabla_X}h)(Y,Z)-h(Y,h^{-1}((\nabla_X h)(Z)))=\pm(\nabla_X h)(Z,Y)=-(\nabla_X h)(Y,Z).$$
Therefore $\nabla^* h=-\nabla h$ and the proof is complete.
\end{proof}
\subsection{Some geometrical structures on $TM\oplus T^*M$}

Let $TM\oplus T^*M$ be the generalized tangent bundle of $M$. On $TM\oplus T^*M$, we consider the natural indefinite metric
\begin{equation}\label{m1}
<X+\eta,Y+\beta>:=-\frac{1}{2}(\eta(Y)+\beta(X))
\end{equation}
and the natural symplectic structure
\begin{equation}\label{m2}
(X+\eta,Y+\beta):=-\frac{1}{2}(\eta(Y)-\beta(X)),
\end{equation}
for all $X,Y\in C^{\infty}(TM)$ and $\eta,\beta\in C^{\infty}(T^*M).$

If $h$ is a non-degenerate $(0,2)$-tensor field on $M$, we define the bilinear form, $\check h$, on $TM\oplus T^*M$ by:
\begin{equation}\label{m3}
\check h(X+\eta, Y+\beta):=h(X,Y)+h(h^{-1}(\eta),h^{-1}(\beta)),
\end{equation}
for all $X,Y\in C^{\infty}(TM)$ and $\eta,\beta\in C^{\infty}(T^*M)$.

Furthermore, given an affine connection $\nabla$ on $M$, we define the affine connections $\hat{\nabla}$ and $\check{\nabla}$ on $TM \oplus T^*M$ by:
\begin{equation}\label{m5}
\hat{\nabla}_{X+\eta}(Y+\beta):=\nabla_XY+h(\nabla_X(h^{-1}(\beta)))
\end{equation}
and
\begin{equation}\label{m4}
\check{\nabla}_{X+\eta}(Y+\beta):=\nabla_XY+\nabla_X\beta.
\end{equation}

Remark that $\hat \nabla =\check \nabla$ if and only if $\nabla h=0$.

Finally we define the bracket $[\cdot,\cdot]_{\nabla}$:
\begin{equation}
[X+\eta,Y+\beta]_{\nabla}:=[X,Y]+\nabla_X\beta-\nabla_Y\eta,
\end{equation}
for all $X,Y\in C^{\infty}(TM)$ and $\eta,\beta\in C^{\infty}(T^*M)$.

\section{$\alpha$-connections in generalized geometry}

\subsection{Generalized quasi-statistical structures}

In our previous paper \cite{blanan} we introduced the concept of generalized quasi-statistical structure.
\begin{definition} Let $h$ be a non-degenerate $(0,2)$-tensor field and let $\nabla$ be an affine connection on $M$. Let $\hat{\nabla}$ be the induced connection on $TM \oplus T^*M$ and let $\hat{h}$ be a non degenerate bilinear form on $TM\oplus T^*M$. Then $(\hat{h},\hat{\nabla})$ is called a \textit{generalized quasi-statistical structure on $TM\oplus T^*M$} if:
$$(d^{\hat{\nabla}}\hat{h})(\sigma,\tau,\nu):=(\hat{\nabla}_{\sigma}\hat{h})(\tau,\nu)-(\hat{\nabla}_{\tau}\hat{h})(\sigma,\nu)+
\hat{h}(T^{\hat{\nabla}}(\sigma,\tau),\nu),$$
for any $\sigma,\tau,\nu\in C^{\infty}(TM\oplus T^*M)$, where $T^{\hat{\nabla}}(\sigma,\tau):=\hat{\nabla}_{\sigma}\tau-\hat{\nabla}_{\tau}\sigma-[\sigma,\tau]_{\nabla}$.
\end{definition}

We proved the followings:
\begin{theorem}\cite{blanan}
Let $\hat{h}$ given by (\ref{m1}) or (\ref{m2}). Then $(\hat{h},\hat{\nabla})$ is a generalized quasi-statistical structure on $TM\oplus T^*M$ if and only if $(M,{h},\nabla)$ is a quasi-statistical manifold.
\end{theorem}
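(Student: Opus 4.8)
The plan is to feed arbitrary sections $\sigma=X+\eta$, $\tau=Y+\beta$, $\nu=Z+\gamma$ of $TM\oplus T^*M$ into $d^{\hat\nabla}\hat h$ and show that the result collapses, up to a fixed nonzero scalar, to $(d^\nabla h)\big(X,Y,h^{-1}(\gamma)\big)$; since $h$ is non-degenerate, $h^{-1}(\gamma)$ ranges over all of $C^\infty(TM)$, and the stated equivalence follows at once. Here $h(V)$ denotes the one-form obtained from $V$ by lowering an index with $h$ and $h^{-1}$ its inverse, with the slot convention fixed as in the Lemma of Subsection~2.2; note also that the anchor of $TM\oplus T^*M$ is the projection onto $TM$, so $\sigma$ acts on functions through $X$ and $\hat\nabla_\sigma$ differentiates only along $X$.

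The computation rests on a single Leibniz identity. Writing $\beta=h(h^{-1}(\beta))$ and differentiating, one obtains
$$h\big(\nabla_X(h^{-1}(\beta))\big)-\nabla_X\beta=-(\nabla_X h)\big(h^{-1}(\beta)\big),$$
which is the only channel through which the non-metricity $\nabla h$ enters. First I would apply this to the torsion: subtracting $\hat\nabla_\tau\sigma$ and the bracket $[\sigma,\tau]_\nabla=[X,Y]+\nabla_X\beta-\nabla_Y\eta$ from $\hat\nabla_\sigma\tau$ produces a section whose tangent part is $T^\nabla(X,Y)$ and whose cotangent part is $-(\nabla_X h)(h^{-1}(\beta))+(\nabla_Y h)(h^{-1}(\eta))$.

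Next I would expand the two derivative terms via $(\hat\nabla_\sigma\hat h)(\tau,\nu)=X\big(\hat h(\tau,\nu)\big)-\hat h(\hat\nabla_\sigma\tau,\nu)-\hat h(\tau,\hat\nabla_\sigma\nu)$ using the explicit form (\ref{m1}) (respectively (\ref{m2})). Each $X(\cdot)$-derivative pairs with the matching $\nabla$-term so that every contribution of the direction's form $\eta$ disappears and, after invoking the identity above, each surviving pair condenses into a clean $\nabla h$ expression; for (\ref{m1}) I expect
$$(\hat\nabla_\sigma\hat h)(\tau,\nu)=-\tfrac12\Big[(\nabla_X h)\big(Z,h^{-1}(\beta)\big)+(\nabla_X h)\big(Y,h^{-1}(\gamma)\big)\Big],$$
with $(\hat\nabla_\tau\hat h)(\sigma,\nu)$ given by the substitution $(X,\eta)\leftrightarrow(Y,\beta)$, and the corresponding formulas for (\ref{m2}) differing only by signs.

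Finally I would assemble $(d^{\hat\nabla}\hat h)(\sigma,\tau,\nu)=(\hat\nabla_\sigma\hat h)(\tau,\nu)-(\hat\nabla_\tau\hat h)(\sigma,\nu)+\hat h\big(T^{\hat\nabla}(\sigma,\tau),\nu\big)$. The terms carrying $h^{-1}(\beta)$ cancel between the first summand and the torsion summand, and likewise the $h^{-1}(\eta)$-terms cancel, leaving precisely
$$(d^{\hat\nabla}\hat h)(\sigma,\tau,\nu)=\mp\tfrac12\,(d^\nabla h)\big(X,Y,h^{-1}(\gamma)\big),$$
the upper sign for (\ref{m1}) and the lower for (\ref{m2}). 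I expect the only real difficulty to be bookkeeping: since $h$ is not assumed symmetric I must track carefully which argument of $h$ (and of $\nabla h$) each raised or lowered index occupies, and confirm that the three residual terms reassemble into $d^\nabla h$ itself rather than its transpose — it is exactly the index convention inherited from the Lemma that guarantees this. Granting that, non-degeneracy of $h$ converts the displayed identity into the asserted equivalence.
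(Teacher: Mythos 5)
Your proposal is correct: the Leibniz identity $h(\nabla_X(h^{-1}(\beta)))-\nabla_X\beta=-(\nabla_Xh)(h^{-1}(\beta))$, the resulting torsion formula $T^{\hat\nabla}(X+\eta,Y+\beta)=T^{\nabla}(X,Y)-\{(\nabla_Xh)(h^{-1}(\beta))-(\nabla_Yh)(h^{-1}(\eta))\}$, your expression for $(\hat\nabla_\sigma\hat h)(\tau,\nu)$, and the final collapse to $\mp\tfrac12\,(d^{\nabla}h)(X,Y,h^{-1}(\gamma))$ (upper sign for (\ref{m1}), lower for (\ref{m2})) all check out with the paper's slot conventions, and non-degeneracy of $h$ then gives the stated equivalence. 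The paper itself defers this proof to \cite{blanan}, but your direct computation is essentially the same argument used there: the intermediate formulas you derive coincide with the ones this paper quotes later (e.g.\ the torsion of $\hat\nabla$ in Subsection 3.3), so there is nothing methodologically different to compare.
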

Moreover:
\begin{proposition}\cite{blanan}
Let $(M,h,\nabla)$ be a quasi-statistical manifold and let $(\hat{h},\hat{\nabla})$ be the generalized quasi-statistical structure on $TM\oplus T^*M$,
with $\hat{h}$ given by (\ref{m1}) or (\ref{m2}). Then the generalized dual quasi-statistical connection, ${\hat {\nabla}}^*$, defined by:
$$\hat h(Y+\beta, {\hat {\nabla}}^*_{X+\eta}(Z+\gamma))=X(\hat h(Y+\beta,Z+\gamma))-\hat h({\hat \nabla}_{X+\eta}(Y+\beta),Z+\gamma),$$
for all $X,Y,Z \in C^{\infty}(TM)$ and $\eta, \beta, \gamma \in C^{\infty}(T^*M)$, is given by:
$${\hat {\nabla}}^*_{X+\eta}(Z+\gamma)=h^{-1}({\nabla}_X(h(Z)))+{\nabla}_X \gamma.$$
\end{proposition}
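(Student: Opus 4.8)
The plan is to exploit the non-degeneracy of $\hat h$: the defining relation
$$\hat h(Y+\beta, {\hat {\nabla}}^*_{X+\eta}(Z+\gamma))=X(\hat h(Y+\beta,Z+\gamma))-\hat h({\hat \nabla}_{X+\eta}(Y+\beta),Z+\gamma)$$
determines ${\hat\nabla}^*_{X+\eta}(Z+\gamma)$ uniquely, so it suffices to verify that the proposed expression satisfies it for every test argument $Y+\beta$. First I would recast the vector part of the claimed formula using the earlier Lemma. From the Leibniz rule for the $\nabla$-derivative of the one-form $h(Z)=h(Z,\cdot)$, namely $\nabla_X(h(Z))=(\nabla_X h)(Z)+h(\nabla_X Z)$, one obtains $h^{-1}(\nabla_X(h(Z)))=\nabla_X Z+h^{-1}((\nabla_X h)(Z))=\nabla^*_X Z$, where $\nabla^*$ is the base dual connection of the Lemma. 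Thus the statement is equivalent to ${\hat\nabla}^*_{X+\eta}(Z+\gamma)=\nabla^*_X Z+\nabla_X\gamma$, i.e. the generalized dual acts by $\nabla^*$ on the vector slot and by $\nabla$ on the covector slot.

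Next I would expand both sides of the defining relation with $\hat h$ given by (\ref{m1}) (the symplectic case (\ref{m2}) is handled identically, the sign change affecting only the covector block symmetrically on both sides). Writing ${\hat\nabla}_{X+\eta}(Y+\beta)=\nabla_X Y+h(\nabla_X(h^{-1}(\beta)))$ and using bilinearity, I would split the identity into its $Y$-part (the terms pairing with $\gamma$) and its $\beta$-part (the terms pairing with the vector slot). The $Y$-part collapses to the plain Leibniz identity $(\nabla_X\gamma)(Y)=X(\gamma(Y))-\gamma(\nabla_X Y)$ and is therefore automatic, pinning down the covector component of ${\hat\nabla}^*$ as $\nabla_X\gamma$.

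The only substantive step is the $\beta$-part. Setting $W:=h^{-1}(\beta)$, so that $\beta=h(W)$ and $\beta(Z)=h(W,Z)$, the $\beta$-terms reduce to the requirement
$$h(W,\nabla^*_X Z)=X(h(W,Z))-h(\nabla_X W,Z),$$
which is exactly the duality relation $X(h(W,Z))=h(\nabla_X W,Z)+h(W,\nabla^*_X Z)$ defining $(\nabla,\nabla^*)$ on $M$. The main (mild) obstacle is keeping the one-form/vector conventions straight and correctly moving $W=h^{-1}(\beta)$ between the two slots of $h$, where the $\pm$ bookkeeping of the symmetric versus skew-symmetric case intervenes; delegating this to the already-proved Lemma makes those signs cancel automatically, which is also why the pairings (\ref{m1}) and (\ref{m2}) produce the same ${\hat\nabla}^*$. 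Note that the quasi-statistical (Codazzi) hypothesis is not itself needed for this computation: it serves only to guarantee, via the preceding Theorem, that $(\hat h,\hat\nabla)$ is a genuine generalized quasi-statistical structure. Since both components match, the proposed formula satisfies the defining relation, and uniqueness completes the proof.
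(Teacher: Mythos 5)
Your proposal is correct. A point of comparison worth noting first: the paper itself gives no proof of this proposition --- it is imported verbatim from \cite{blanan} --- so there is no in-paper argument to match line by line; the natural proof there would be to \emph{solve} the defining relation for ${\hat\nabla}^*_{X+\eta}(Z+\gamma)$ component by component, whereas you run the same computation in reverse, checking that the candidate formula satisfies the relation and then invoking uniqueness from the non-degeneracy of $\hat h$. Both routes are legitimate and essentially equivalent in content. Your component analysis is right: pairing against $\gamma$ gives the Leibniz rule $(\nabla_X\gamma)(Y)=X(\gamma(Y))-\gamma(\nabla_XY)$, which fixes the covector slot as $\nabla_X\gamma$, and pairing against $\beta=h(W)$ reduces to the base duality relation once one knows $h^{-1}(\nabla_X(h(Z)))=\nabla_XZ+h^{-1}((\nabla_Xh)(Z))=\nabla^*_XZ$, which is exactly the first formula of Lemma 2.2; the quasi-statistical (Codazzi) hypothesis indeed plays no role in this verification, as you observe. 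The one caveat you flag but could state more forcefully: the slot exchanges ($\beta(Z)=h(W,Z)$ versus $h(Z,W)$, and the identification of the vector determined by $h(A,\cdot)$ with the one determined by $h(\cdot,A)$) genuinely require $h$ to be symmetric or skew-symmetric; for a general non-degenerate $h$ the two musical isomorphisms differ and the asserted formula would characterize the vector part by contraction in the wrong slot. Since Lemma 2.2, to which you delegate these signs, carries precisely that hypothesis (as does the rest of the paper's framework), your proof is sound in the regime where the statement is meant to be read.
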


Also, if $h$ is a non-degenerate, symmetric or skew-symmetric $(0,2)$-tensor field on $M$, then:
\begin{proposition}\cite{blanan}
$(\check{h}, \hat{\nabla})$ is a generalized quasi-statistical structure on $TM\oplus T^*M$ if and only if $(M,h,\nabla)$ is a quasi-statistical manifold.
\end{proposition}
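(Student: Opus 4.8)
The plan is to show by a direct computation that, writing $\sigma = X+\eta$, $\tau = Y+\beta$, $\nu = Z+\gamma$, the expression $(d^{\hat{\nabla}}\check{h})(\sigma,\tau,\nu)$ collapses exactly to $(d^{\nabla}h)(X,Y,Z)$, so that the cotangent data $\eta,\beta,\gamma$ disappear entirely. The equivalence then follows immediately: since $\sigma,\tau,\nu$ range over all sections and in particular $Z$ is arbitrary, $d^{\hat{\nabla}}\check{h}=0$ if and only if $(d^{\nabla}h)(X,Y,Z)=0$ for all $X,Y,Z\in C^\infty(TM)$, i.e.\ if and only if $(M,h,\nabla)$ is quasi-statistical. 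The calculation factors into the three pieces matching the three terms in the definition of $d^{\hat{\nabla}}\check{h}$.

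First I would compute the torsion $T^{\hat{\nabla}}(\sigma,\tau)=\hat{\nabla}_\sigma\tau-\hat{\nabla}_\tau\sigma-[\sigma,\tau]_\nabla$. Using (\ref{m5}) and the bracket, its $TM$-component is $\nabla_XY-\nabla_YX-[X,Y]=T^\nabla(X,Y)$, while its $T^*M$-component is $h(\nabla_X(h^{-1}(\beta)))-\nabla_X\beta-h(\nabla_Y(h^{-1}(\eta)))+\nabla_Y\eta$. The key simplification is the Leibniz identity $\nabla_X\beta=(\nabla_Xh)(h^{-1}(\beta))+h(\nabla_X(h^{-1}(\beta)))$, which turns each cotangent pair into $-(\nabla_Xh)(h^{-1}(\beta))$, giving $T^{\hat{\nabla}}(\sigma,\tau)=T^\nabla(X,Y)+\big[(\nabla_Yh)(h^{-1}(\eta))-(\nabla_Xh)(h^{-1}(\beta))\big]$, where the bracketed expression is a $1$-form.

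Next I would compute $(\hat{\nabla}_\sigma\check{h})(\tau,\nu)=X(\check{h}(\tau,\nu))-\check{h}(\hat{\nabla}_\sigma\tau,\nu)-\check{h}(\tau,\hat{\nabla}_\sigma\nu)$, the anchor being the tangent part $X$. Because $\hat{\nabla}$ and $\check{h}$ both split along $TM\oplus T^*M$, and $h^{-1}(h(\nabla_X(h^{-1}(\beta))))=\nabla_X(h^{-1}(\beta))$, this separates into a tangent and a cotangent covariant-derivative block, yielding the clean expression $(\hat{\nabla}_\sigma\check{h})(\tau,\nu)=(\nabla_Xh)(Y,Z)+(\nabla_Xh)(h^{-1}(\beta),h^{-1}(\gamma))$, and symmetrically for $(\hat{\nabla}_\tau\check{h})(\sigma,\nu)$. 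Finally, pairing the torsion with $\nu$ and using $\check{h}(\theta,Z+\gamma)=\theta(h^{-1}(\gamma))$ for a $1$-form $\theta$, the term $\check{h}(T^{\hat{\nabla}}(\sigma,\tau),\nu)$ contributes $h(T^\nabla(X,Y),Z)+(\nabla_Yh)(h^{-1}(\eta),h^{-1}(\gamma))-(\nabla_Xh)(h^{-1}(\beta),h^{-1}(\gamma))$. Summing the three pieces, the two cotangent blocks cancel in pairs and only $(\nabla_Xh)(Y,Z)-(\nabla_Yh)(X,Z)+h(T^\nabla(X,Y),Z)=(d^\nabla h)(X,Y,Z)$ survives.

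The main obstacle is bookkeeping rather than conceptual: one must track the musical isomorphism $h^{-1}$ and the contraction slots carefully, in particular verifying that $h(h^{-1}(\theta),h^{-1}(\gamma))=\theta(h^{-1}(\gamma))$ and that the Leibniz step holds irrespective of whether $h$ is symmetric or skew-symmetric, so that every cotangent contribution appears with precisely the sign required for the cancellation. Once the identity $(d^{\hat{\nabla}}\check{h})(\sigma,\tau,\nu)=(d^\nabla h)(X,Y,Z)$ is established, both implications of the equivalence are immediate.
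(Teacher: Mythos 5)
Your proposal is correct and takes the same route as the paper: everything reduces to the identity $(d^{\hat\nabla}\check h)(X+\eta,Y+\beta,Z+\gamma)=(d^\nabla h)(X,Y,Z)$, which is precisely what the paper itself records (item ii) of the last Remark of Subsection 3.2), and your intermediate formulas for $T^{\hat\nabla}$ and for $\hat\nabla\check h$ agree with the ones displayed in Subsection 3.3.

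One step needs repair, though it does not affect your conclusion. The identity you propose to verify, $\check h(\theta,Z+\gamma)=\theta(h^{-1}(\gamma))$ for a $1$-form $\theta$, does \emph{not} hold irrespective of whether $h$ is symmetric or skew-symmetric: with the paper's convention $h(W)=h(\cdot,W)$ one gets $\check h(\theta,Z+\gamma)=h(h^{-1}(\theta),h^{-1}(\gamma))=\gamma(h^{-1}(\theta))$, which equals $\theta(h^{-1}(\gamma))$ when $h$ is symmetric but $-\theta(h^{-1}(\gamma))$ when $h$ is skew-symmetric. Your final expression for the torsion contribution, $(\nabla_Yh)(h^{-1}(\eta),h^{-1}(\gamma))-(\nabla_Xh)(h^{-1}(\beta),h^{-1}(\gamma))$, is nevertheless valid in both cases, because rewriting $\bigl[(\nabla_Yh)(h^{-1}(\eta))\bigr](h^{-1}(\gamma))$ as $(\nabla_Yh)(h^{-1}(\eta),h^{-1}(\gamma))$ swaps the two slots of $\nabla_Yh$, and since $\nabla_Xh$ inherits the (skew-)symmetry of $h$, this produces a second sign in the skew case that cancels the first. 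This is exactly where the standing hypothesis of the proposition --- $h$ symmetric or skew-symmetric, stated in the paper immediately before the statement --- enters: for a general non-degenerate $h$ the cotangent blocks do not cancel and the identity fails, so you should invoke that hypothesis explicitly rather than assert sign-independence.
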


\begin{proposition}\cite{blanan}
Let $(M,h,\nabla)$ be a quasi-statistical manifold and let $(\check{h},\hat{\nabla})$ be the generalized quasi-statistical structure induced on $TM\oplus T^*M$. Then the generalized dual quasi-statistical connection, ${{\hat {\nabla}}^*}_{\check h}$, defined by:
$$ \check h (Y+\beta, ({{{\hat{\nabla}}^*}}_{\check h})_{X+\eta} (Z+\gamma))=X(\check h(Y+\beta,Z+\gamma))-\check h({\hat \nabla}_{X+\eta}(Y+\beta),Z+\gamma),$$
for all $X,Y,Z \in C^{\infty}(TM)$ and $\eta, \beta, \gamma \in C^{\infty}(T^*M)$, coincides with ${\hat{\nabla}}^*$.
\end{proposition}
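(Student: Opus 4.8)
The plan is to invoke the uniqueness of the dual connection: since $\check h$ is non-degenerate, for fixed $X+\eta$ and $Z+\gamma$ the defining relation pins down $({{\hat{\nabla}}^*}_{\check h})_{X+\eta}(Z+\gamma)$ uniquely (the right-hand side is $C^\infty$-linear in $Y+\beta$, by the Leibniz rule for $\hat\nabla$). Hence it suffices to check that the connection ${\hat\nabla}^*$ produced in the previous Proposition, namely
$${\hat\nabla}^*_{X+\eta}(Z+\gamma)=h^{-1}(\nabla_X(h(Z)))+\nabla_X\gamma,$$
already satisfies the $\check h$-duality equation, and then conclude equality by uniqueness.

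First I would exploit that $\check h$, $\hat\nabla$ and ${\hat\nabla}^*$ all respect the splitting $TM\oplus T^*M$. Expanding $\check h(\hat\nabla_{X+\eta}(Y+\beta),Z+\gamma)$ with (\ref{m3}) and (\ref{m5}) and cancelling $h^{-1}\circ h$ yields a tangential term $h(\nabla_X Y,Z)$ and a cotangential term $h(\nabla_X(h^{-1}(\beta)),h^{-1}(\gamma))$. Likewise, expanding $\check h(Y+\beta,{\hat\nabla}^*_{X+\eta}(Z+\gamma))$ yields $h(Y,h^{-1}(\nabla_X(h(Z))))$ together with $h(h^{-1}(\beta),h^{-1}(\nabla_X\gamma))$. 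Adding the four terms, the claim reduces to showing that their sum equals $X(\check h(Y+\beta,Z+\gamma))=X(h(Y,Z))+X(h(h^{-1}(\beta),h^{-1}(\gamma)))$.

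The whole computation then rests on a single scalar identity, valid for any non-degenerate $h$:
$$X(h(Y,Z))=h(\nabla_X Y,Z)+h\bigl(Y,h^{-1}(\nabla_X(h(Z)))\bigr).$$
This is nothing but the duality relation defining the tangent dual connection $\nabla^*$, once one uses its explicit form $\nabla^*_X Z=h^{-1}(\nabla_X(h(Z)))$ recorded in the Lemma; it follows from the Leibniz rule together with $({\nabla}_X(h(Z)))(Y)=X(h(Z,Y))-h(Z,\nabla_X Y)$ and the relation $h(Y,h^{-1}(\theta))=\pm\theta(Y)$, the two sign choices ($+$ for $h$ symmetric, $-$ for $h$ skew-symmetric) compensating so that the identity holds uniformly. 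This disposes of the pure $TM$ terms at once. For the pure $T^*M$ terms I would substitute $U:=h^{-1}(\beta)$ and $V:=h^{-1}(\gamma)$, so that $\nabla_X(h^{-1}(\beta))=\nabla_X U$ and $h^{-1}(\nabla_X\gamma)=h^{-1}(\nabla_X(h(V)))$; the cotangential sum becomes $h(\nabla_X U,V)+h(U,h^{-1}(\nabla_X(h(V))))$, which is literally the same identity applied to $U,V$ and hence equals $X(h(U,V))=X(h(h^{-1}(\beta),h^{-1}(\gamma)))$. Combining the two groups reproduces $X(\check h(Y+\beta,Z+\gamma))$, so ${\hat\nabla}^*$ satisfies the relation and, by uniqueness, coincides with $({{\hat{\nabla}}^*}_{\check h})$.

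The computations are routine bookkeeping; the only point requiring care is the consistent handling of the musical isomorphisms $h,h^{-1}$ and of the symmetric versus skew-symmetric sign $\pm$, which is precisely where the single scalar identity above must be checked uniformly in both cases.
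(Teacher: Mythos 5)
Your proof is correct, but note that there is nothing in this paper to compare it against: the proposition is imported from the authors' earlier work \cite{blanan} and is stated here without proof. Your strategy --- check that the explicit connection ${\hat\nabla}^*_{X+\eta}(Z+\gamma)=h^{-1}(\nabla_X(h(Z)))+\nabla_X\gamma$ satisfies the $\check h$-duality equation, then conclude by uniqueness (which holds because the right-hand side is $C^\infty$-linear in $Y+\beta$ by the Leibniz rule and $\check h$ is non-degenerate) --- is the natural argument, and the computation itself is sound: since $\check h$, $\hat\nabla$ and ${\hat\nabla}^*$ are block-diagonal with respect to the splitting $TM\oplus T^*M$, the duality equation decouples into a $TM$-part and a $T^*M$-part, and the substitution $U=h^{-1}(\beta)$, $V=h^{-1}(\gamma)$ reduces the latter to the former, namely to the scalar identity $X(h(Y,Z))=h(\nabla_XY,Z)+h\bigl(Y,h^{-1}(\nabla_X(h(Z)))\bigr)$. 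One caveat on wording: this identity is \emph{not} ``valid for any non-degenerate $h$,'' as your parenthetical asserts. Its proof requires $h(Y,h^{-1}(\theta))=\pm\theta(Y)$, which holds exactly when $h$ is symmetric or skew-symmetric; for a general non-degenerate $h$ the term $h(Y,h^{-1}(\theta))$ cannot be converted into $\pm\theta(Y)$, and the stated ${\hat\nabla}^*$ need not be $\check h$-dual to $\hat\nabla$. This is why the paper places the proposition under the standing hypothesis (stated in the sentence preceding Proposition 3.3) that $h$ is symmetric or skew-symmetric, and it is precisely this hypothesis that your own $\pm$ sign analysis uses; it should be invoked explicitly rather than claimed superfluous.
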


\begin{proposition}\cite{blanan}
Let $\nabla$ be a torsion-free affine connection on $M$ and let $(\hat{h}, \check{\nabla})$ be the generalized quasi-statistical structure on $TM\oplus T^*M$,
with $\hat{h}$ given by (\ref{m1}) or (\ref{m2}). Then $\check{\nabla}$ and its generalized dual quasi-statistical connection, ${\check{\nabla}}^*$, coincide.
\end{proposition}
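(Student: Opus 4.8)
The plan is to deduce $\check\nabla=\check\nabla^*$ from the single fact that $\check\nabla$ is parallel with respect to $\hat h$, i.e. $\check\nabla\hat h=0$. Indeed, exactly as in the preceding propositions, the generalized dual connection is defined by
$$\hat h(Y+\beta,\check\nabla^*_{X+\eta}(Z+\gamma))=X(\hat h(Y+\beta,Z+\gamma))-\hat h(\check\nabla_{X+\eta}(Y+\beta),Z+\gamma),$$
so once I establish the Leibniz identity
$$X(\hat h(Y+\beta,Z+\gamma))=\hat h(\check\nabla_{X+\eta}(Y+\beta),Z+\gamma)+\hat h(Y+\beta,\check\nabla_{X+\eta}(Z+\gamma)),$$
substituting it into the defining relation gives $\hat h(Y+\beta,\check\nabla^*_{X+\eta}(Z+\gamma))=\hat h(Y+\beta,\check\nabla_{X+\eta}(Z+\gamma))$ for all $Y+\beta$, and nondegeneracy of $\hat h$ then forces $\check\nabla^*=\check\nabla$.

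Everything therefore reduces to verifying the Leibniz identity. First I would take $\hat h$ as in (\ref{m1}), so that $\hat h(Y+\beta,Z+\gamma)=-\frac{1}{2}(\beta(Z)+\gamma(Y))$, and differentiate along $X$ to obtain $X(\hat h(Y+\beta,Z+\gamma))=-\frac{1}{2}(X(\beta(Z))+X(\gamma(Y)))$. Using the definition (\ref{m4}), namely $\check\nabla_{X+\eta}(Y+\beta)=\nabla_XY+\nabla_X\beta$, I would expand the two terms on the right-hand side and collect them as
$$-\frac{1}{2}\big((\nabla_X\beta)(Z)+\beta(\nabla_XZ)+(\nabla_X\gamma)(Y)+\gamma(\nabla_XY)\big).$$
The crucial point is that the connection induced by $\nabla$ on $T^*M$ obeys the duality rule $X(\beta(Z))=(\nabla_X\beta)(Z)+\beta(\nabla_XZ)$, and likewise for $\gamma,Y$; this is exactly what recombines the collected expression into $-\frac{1}{2}(X(\beta(Z))+X(\gamma(Y)))$, which is $X(\hat h(Y+\beta,Z+\gamma))$. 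For $\hat h$ given by (\ref{m2}) the same computation goes through after inserting the appropriate minus sign in the $\gamma(\cdot)$ terms, the duality rule then producing the matching $-\frac{1}{2}(X(\beta(Z))-X(\gamma(Y)))$.

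I expect no serious obstacle: the content is essentially that $\check\nabla$, which acts by $\nabla$ on the $TM$-summand and by the induced dual connection on the $T^*M$-summand, preserves the canonical pairing between $TM$ and $T^*M$ out of which both (\ref{m1}) and (\ref{m2}) are built, so $\check\nabla$ is automatically a metric (resp. symplectic) connection for $\hat h$. The only place demanding care is the uniform bookkeeping of signs, so that a single argument treats the symmetric case (\ref{m1}) and the skew-symmetric case (\ref{m2}) simultaneously. Finally, I would remark that the torsion-free hypothesis on $\nabla$ is precisely what makes $(\hat h,\check\nabla)$ a genuine generalized quasi-statistical structure: a direct expansion gives $T^{\check\nabla}(X+\eta,Y+\beta)=T^\nabla(X,Y)$, so that $T^\nabla=0$ together with $\check\nabla\hat h=0$ yields $d^{\check\nabla}\hat h=0$.
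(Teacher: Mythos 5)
Your proposal is correct. Note that the paper itself gives no proof of this proposition --- it is quoted from the authors' earlier work \cite{blanan} --- so there is no line-by-line comparison to make; judged on its own, your argument is complete. The key identity $\check\nabla\hat h=0$ does hold: with $\check\nabla_{X+\eta}(Y+\beta)=\nabla_XY+\nabla_X\beta$ and $\hat h$ built from the canonical pairing as in (\ref{m1}) or (\ref{m2}), the duality rule $(\nabla_X\beta)(Z)+\beta(\nabla_XZ)=X(\beta(Z))$ gives exactly the Leibniz identity you state, in both the symmetric and skew-symmetric cases; feeding that into the defining relation for $\check\nabla^*$ and invoking non-degeneracy of $\hat h$ forces $\check\nabla^*=\check\nabla$. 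Your argument is arguably cleaner than a brute-force computation of $\check\nabla^*$ from the defining relation (the route one would expect in \cite{blanan}), because it isolates the conceptual point: $\check\nabla$ acts by $\nabla$ on $TM$ and by the dual connection on $T^*M$, hence preserves the canonical pairing, i.e.\ it is self-dual with respect to any bilinear form built from that pairing. You are also right, and it is worth saying explicitly as you do, that the torsion-free hypothesis plays no role in the equality $\check\nabla^*=\check\nabla$; it is needed only so that $T^{\check\nabla}=T^\nabla=0$, which together with $\check\nabla\hat h=0$ makes $(\hat h,\check\nabla)$ a generalized quasi-statistical structure as asserted in the hypothesis of the proposition.
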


\begin{proposition}\cite{blanan}
Let $(M,h,\nabla)$ be a quasi-statistical manifold with $\nabla$ a torsion-free affine connection, $h$ a $\nabla$-parallel $(0,2)$-tensor field on $M$ and let $(\check{h},\check{\nabla})$ be the generalized quasi-statistical structure on $TM\oplus T^*M$, with $\check{h}$ given by (\ref{m3}). Then $\check{\nabla}$ and its generalized dual quasi-statistical connection, ${{\check {\nabla}}^*}_{\check h}$, coincide.
\end{proposition}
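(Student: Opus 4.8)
The plan is to reduce the claim to the single identity $\check\nabla\check h=0$, i.e.\ that $\check\nabla$ is a metric connection for $\check h$. Indeed, in exact analogy with the preceding proposition, the generalized dual ${{\check {\nabla}}^*}_{\check h}$ is defined by
$$\check h(Y+\beta,({{\check {\nabla}}^*}_{\check h})_{X+\eta}(Z+\gamma))=X(\check h(Y+\beta,Z+\gamma))-\check h(\check\nabla_{X+\eta}(Y+\beta),Z+\gamma),$$
and since $h$ is non-degenerate, $\check h$ is non-degenerate, so ${{\check {\nabla}}^*}_{\check h}$ is the \emph{unique} connection satisfying this relation. If $\check\nabla\check h=0$, the right-hand side equals $\check h(Y+\beta,\check\nabla_{X+\eta}(Z+\gamma))$, and the defining relation then forces $({{\check {\nabla}}^*}_{\check h})_{X+\eta}(Z+\gamma)=\check\nabla_{X+\eta}(Z+\gamma)$ for all $Z+\gamma$. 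Thus the whole statement rests on proving $\check\nabla\check h=0$.

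To establish that, I would first record the two consequences of $\nabla h=0$. On vector fields it gives $X(h(Y,Z))=h(\nabla_XY,Z)+h(Y,\nabla_XZ)$; on one-forms it gives the commutation $\nabla_X(h^{-1}(\beta))=h^{-1}(\nabla_X\beta)$, since writing $\beta=h(B)$ yields $\nabla_X\beta=(\nabla_Xh)(B)+h(\nabla_XB)=h(\nabla_XB)$. This second fact is exactly what aligns the cotangent part of $\check\nabla$ (which differentiates $\beta$ by $\nabla_X\beta$) with the cotangent part of $\check h$ (which is built from $h^{-1}$).

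Then I would expand directly. Writing $B=h^{-1}(\beta)$ and $C=h^{-1}(\gamma)$, definition (\ref{m3}) gives $\check h(Y+\beta,Z+\gamma)=h(Y,Z)+h(B,C)$, hence $X(\check h(Y+\beta,Z+\gamma))=X(h(Y,Z))+X(h(B,C))$. On the other side, using (\ref{m4}) together with the commutation $h^{-1}(\nabla_X\beta)=\nabla_XB$, one finds
$$\check h(\check\nabla_{X+\eta}(Y+\beta),Z+\gamma)=h(\nabla_XY,Z)+h(\nabla_XB,C),$$
and symmetrically $\check h(Y+\beta,\check\nabla_{X+\eta}(Z+\gamma))=h(Y,\nabla_XZ)+h(B,\nabla_XC)$. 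Adding the two and applying $\nabla h=0$ in both the vector slots and the $h^{-1}$-slots collapses the sum to $X(h(Y,Z))+X(h(B,C))=X(\check h(Y+\beta,Z+\gamma))$, which is precisely $\check\nabla\check h=0$.

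The main obstacle — indeed the only non-routine point — is the commutation $\nabla_X(h^{-1}(\beta))=h^{-1}(\nabla_X\beta)$ and the resulting compatibility of $\check\nabla$ with $\check h$; this is where the parallelism $\nabla h=0$ enters decisively, and without it the cotangent contribution would fail to match. Everything else is a direct expansion against (\ref{m3}) and (\ref{m4}) followed by invoking the non-degeneracy of $\check h$.
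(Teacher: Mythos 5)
Your proof is correct. Note first that this proposition is stated in the paper without proof (it is quoted from the authors' earlier work \cite{blanan}), so there is no in-paper argument to compare against; judged on its own, your reduction is sound: the defining relation determines ${{\check{\nabla}}^*}_{\check h}$ uniquely because $\check h$ inherits non-degeneracy from $h$, so it suffices to prove $\check{\nabla}\check{h}=0$, and your verification of that identity is complete. The decisive step is exactly the one you isolate, namely the commutation $\nabla_X(h^{-1}(\beta))=h^{-1}(\nabla_X\beta)$, which follows from $\nabla_X(h(B))=(\nabla_Xh)(B)+h(\nabla_XB)$ and $\nabla h=0$; once the cotangent part of $\check{\nabla}$ is aligned with the $h^{-1}$-slots of $\check h$, metricity collapses to two applications of $X(h(\cdot,\cdot))=h(\nabla_X\cdot,\cdot)+h(\cdot,\nabla_X\cdot)$. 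It is worth observing that the same commutation gives an even shorter route entirely inside the present paper's toolkit: $\nabla h=0$ forces $\hat{\nabla}=\check{\nabla}$ (as remarked after (\ref{m4})) and also forces the explicit formula ${\hat{\nabla}}^*_{X+\eta}(Z+\gamma)=h^{-1}(\nabla_X(h(Z)))+\nabla_X\gamma$ to reduce to $\nabla_XZ+\nabla_X\gamma=\check{\nabla}_{X+\eta}(Z+\gamma)$; since the paper's earlier proposition identifies the $\check h$-dual of $\hat{\nabla}$ with ${\hat{\nabla}}^*$, one gets ${{\check{\nabla}}^*}_{\check h}={\hat{\nabla}}^*=\check{\nabla}$ at once. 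Your argument proves the same thing from scratch, which is slightly longer but self-contained, and it makes transparent that only $\nabla h=0$ and non-degeneracy are actually used (torsion-freeness enters only in making $(\check h,\check{\nabla})$ a generalized quasi-statistical structure, not in the duality statement itself).
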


\subsection{$\alpha$-connections}

In \cite{blanan} we obtained the following pairs of dual connections with respect to $\hat{h}$
given by (\ref{m1}) or (\ref{m2}) and to $\check{h}$ given by (\ref{m3}):
$$(\hat{h}; \hat{\nabla},\hat{\nabla}^*), \ \ (\check{h}; \hat{\nabla},\hat{\nabla}^*),$$
where $\hat{\nabla}$ is given by (\ref{m5}) and $\check{\nabla}$ is given by (\ref{m4}). Remark that the dual of the generalized dual quasi-statistical connection with respect to $\hat{h}$ coincides with the initial connection, and with respect to $\check{h}$ coincides with the initial connection if $\check{h}$ is symmetric or skew-symmetric.

For the dualistic structure $(\check{h}; \hat{\nabla},\hat{\nabla}^*)$, we consider a family of connections, $\{ \hat{\nabla}^{(\alpha)}\}$, on $TM\oplus T^*M$, for any $\alpha \in \mathbb{R}$, called \textit{$\alpha$-connections}:
$$\hat{\nabla}^{(\alpha)}:=\frac{1+\alpha}{2}\hat{\nabla}+\frac{1-\alpha}{2}\hat{\nabla}^*.$$

We immediately have that $\hat{\nabla}^{(1)}=\hat \nabla$, $\hat{\nabla}^{(-1)}={\hat \nabla}^*$ and for $\alpha=0$, \textit{the average connection} of $\hat{\nabla}$ and $\hat{\nabla}^*$ is:
$$\hat{\nabla}^{(0)}_{X+\eta}(Y+\beta)=\frac{1}{2}\check{\nabla}_{X+\eta}(Y+\beta)+\frac{1}{2}\{h^{-1}((\nabla_Xh)(Y))-(\nabla_Xh)(h^{-1}(\beta))\},$$
for all $X,Y\in C^{\infty}(TM)$ and $\eta,\beta\in C^{\infty}(T^*M)$.

\bigskip
\begin{proposition}
The torsion of the $\alpha$-connection is given by:
$$T^{\hat{\nabla}^{(\alpha)}}(X+\eta,Y+\beta)=T^{\nabla}(X,Y)+$$
$$-\frac{1+\alpha}{2}\{(\nabla_Xh)(h^{-1}(\beta))-(\nabla_Yh)(h^{-1}(\eta))\}+$$
$$+\frac{1-\alpha}{2}\{h^{-1}((\nabla_Xh)(Y))-h^{-1}((\nabla_Yh)(X))\},$$
for any $X,Y\in C^{\infty}(TM)$ and $\eta,\beta\in C^{\infty}(T^*M)$.
\end{proposition}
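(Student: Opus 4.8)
The plan is to exploit the fact that $\hat\nabla^{(\alpha)}$ is an \emph{affine} combination of $\hat\nabla$ and $\hat\nabla^*$, with coefficients $\frac{1+\alpha}{2}$ and $\frac{1-\alpha}{2}$ that sum to $1$. Since the torsion of each of these connections is taken against the \emph{same} bracket $[\cdot,\cdot]_\nabla$, that bracket term is shared, and a one-line manipulation reduces the whole computation to the two base cases. Concretely, writing $\hat\nabla^{(\alpha)}_\sigma\tau-\hat\nabla^{(\alpha)}_\tau\sigma$ as the corresponding affine combination of $\hat\nabla_\sigma\tau-\hat\nabla_\tau\sigma$ and $\hat\nabla^*_\sigma\tau-\hat\nabla^*_\tau\sigma$, and subtracting $[\sigma,\tau]_\nabla=\big(\tfrac{1+\alpha}{2}+\tfrac{1-\alpha}{2}\big)[\sigma,\tau]_\nabla$, I obtain
$$T^{\hat\nabla^{(\alpha)}}(\sigma,\tau)=\frac{1+\alpha}{2}\,T^{\hat\nabla}(\sigma,\tau)+\frac{1-\alpha}{2}\,T^{\hat\nabla^*}(\sigma,\tau),$$
for all $\sigma,\tau\in C^\infty(TM\oplus T^*M)$. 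So it suffices to compute $T^{\hat\nabla}$ and $T^{\hat\nabla^*}$ and recombine.

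Second, I would put both $\hat\nabla$ and $\hat\nabla^*$ into a common form in which the $\nabla h$ terms appear explicitly. Applying the Leibniz rule $\nabla_X(h(Y))=(\nabla_Xh)(Y)+h(\nabla_XY)$ (and its inverse applied to $h^{-1}(\beta)$) to the defining formula (\ref{m5}) and to the expression for $\hat\nabla^*$ recorded in the Proposition above, I get
$$\hat\nabla_{X+\eta}(Y+\beta)=\nabla_XY+\nabla_X\beta-(\nabla_Xh)(h^{-1}(\beta)),$$
$$\hat\nabla^*_{X+\eta}(Y+\beta)=\nabla_XY+\nabla_X\beta+h^{-1}((\nabla_Xh)(Y)),$$
for all $X,Y\in C^\infty(TM)$ and $\eta,\beta\in C^\infty(T^*M)$; these are just the dual connections of the Lemma transported to $TM\oplus T^*M$.

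Third, I would insert these into the torsion definition. With $\sigma=X+\eta$, $\tau=Y+\beta$ and $[X+\eta,Y+\beta]_\nabla=[X,Y]+\nabla_X\beta-\nabla_Y\eta$, the vector parts yield $\nabla_XY-\nabla_YX-[X,Y]=T^\nabla(X,Y)$, while the covector parts $\nabla_X\beta-\nabla_Y\eta$ cancel exactly against the covector component of the bracket. This gives
$$T^{\hat\nabla}(X+\eta,Y+\beta)=T^\nabla(X,Y)-\big[(\nabla_Xh)(h^{-1}(\beta))-(\nabla_Yh)(h^{-1}(\eta))\big],$$
$$T^{\hat\nabla^*}(X+\eta,Y+\beta)=T^\nabla(X,Y)+\big[h^{-1}((\nabla_Xh)(Y))-h^{-1}((\nabla_Yh)(X))\big].$$
Substituting into the affine combination of the first step, and using $\frac{1+\alpha}{2}+\frac{1-\alpha}{2}=1$ once more to collapse the two copies of $T^\nabla(X,Y)$ into one, produces precisely the asserted formula.

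The computation carries no genuine obstacle; it is essentially bookkeeping. The single point requiring care is the sign and placement of the $\nabla h$ contributions when rewriting $\hat\nabla$ and $\hat\nabla^*$ through the Leibniz rule — in particular, verifying that the covector part of each connection is exactly $\nabla_X\beta$, so that it cancels cleanly against $[\cdot,\cdot]_\nabla$ and leaves $T^\nabla(X,Y)$ as the only $\alpha$-independent term.
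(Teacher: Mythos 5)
Your proof is correct and follows essentially the same route as the paper's: a direct expansion of $\hat{\nabla}^{(\alpha)}=\frac{1+\alpha}{2}\hat{\nabla}+\frac{1-\alpha}{2}\hat{\nabla}^*$ inside the torsion definition, using the Leibniz rule to convert $h(\nabla_X(h^{-1}(\beta)))-\nabla_X\beta$ and $h^{-1}(\nabla_X(h(Y)))-\nabla_XY$ into $-(\nabla_Xh)(h^{-1}(\beta))$ and $h^{-1}((\nabla_Xh)(Y))$, with the covector part of $[\cdot,\cdot]_{\nabla}$ cancelling as you describe. The only difference is organizational: you first isolate the identity $T^{\hat{\nabla}^{(\alpha)}}=\frac{1+\alpha}{2}T^{\hat{\nabla}}+\frac{1-\alpha}{2}T^{\hat{\nabla}^*}$ and compute the two base torsions separately, whereas the paper carries out exactly the same grouping within a single computation.
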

\begin{proof}
$$T^{\hat{\nabla}^{(\alpha)}}(X+\eta,Y+\beta):=
\hat{\nabla}^{(\alpha)}_{X+\eta}(Y+\beta)-\hat{\nabla}^{(\alpha)}_{Y+\beta}(X+\eta)-[X+\eta,Y+\beta]_{\nabla}=$$
$$=T^{\nabla}(X,Y)+\frac{1+\alpha}{2}\{h(\nabla_X(h^{-1}(\beta)))-\nabla_X\beta
-h(\nabla_Y(h^{-1}(\eta)))+\nabla_Y\eta\}+$$
$$+\frac{1-\alpha}{2}\{h^{-1}(\nabla_X(h(Y)))-\nabla_XY-h^{-1}(\nabla_Y(h(X)))+\nabla_YX\}=$$
$$=T^{\nabla}(X,Y)-\frac{1+\alpha}{2}\{(\nabla_Xh)(h^{-1}(\beta))-(\nabla_Yh)(h^{-1}(\eta))\}+$$
$$+\frac{1-\alpha}{2}\{h^{-1}((\nabla_Xh)(Y))-h^{-1}((\nabla_Yh)(X))\},$$
for any $X,Y\in C^{\infty}(TM)$ and $\eta,\beta\in C^{\infty}(T^*M)$.
\end{proof}

\begin{remark}
On the generalized tangent bundle, the family of $\alpha$-connections can be constructed in two ways, which coincide if $h$ is symmetric or skew-symmetric.
Precisely, let $(h,\nabla)$ be a quasi-statistical structure on $M$ and let $(\check{h},\hat{\nabla})$ be the generalized quasi-statistical structure on $TM\oplus T^*M$ induced by $(h,\nabla)$.

For the quasi-statistical structure $(h,\nabla)$ and $\nabla^*$ the dual connection of $\nabla$, we consider the family of $\alpha$-connections on $M$:
$$
\nabla^{(\alpha)}:=\frac{1+\alpha}{2}\nabla+\frac{1-\alpha}{2}\nabla^*.
$$
Then the dual connection of $\nabla^{(\alpha)}$ is $\nabla^{(-\alpha)}$.

For $(h,\nabla^{(\alpha)})$ and $\nabla^{(-\alpha)}$ the dual connection of $\nabla^{(\alpha)}$, let $(\check{h},\widehat{\nabla^{(\alpha)}})$ be the generalized structure on $TM\oplus T^*M$ induced by $(h,\nabla^{(\alpha)})$.

For the generalized quasi-statistical structure $(\check{h},\hat{\nabla})$ and $\hat{\nabla}^*$ the dual connection of $\hat{\nabla}$, we consider the family of $\alpha$-connections on $TM\oplus T^*M$:
$$\hat{\nabla}^{(\alpha)}:=\frac{1+\alpha}{2}\hat{\nabla}+\frac{1-\alpha}{2}\hat{\nabla}^*.$$
Then the dual connection of $\hat{\nabla}^{(\alpha)}$ is $\hat{\nabla}^{(-\alpha)}$.

Then:
$$(\widehat{\nabla^{(\alpha)}}-\hat{\nabla}^{(\alpha)})(X+\eta,Y+\beta)=
\frac{1-\alpha}{2}\{h^{-1}((\nabla_Xh)(Y))-h^{-1}(\nabla_X(h(Y)))+\nabla_XY+$$$$+
(\nabla_Xh)(h^{-1}(\beta))+h(\nabla_X(h^{-1}(\beta)))-\nabla_X\beta\}=0.$$
\end{remark}

\begin{remark}
If $(h,\nabla)$ is a quasi-statistical structure on $M$ and $h$ is symmetric or skew-symmetric, then:

i) $$T^{\nabla^{(\alpha)}}=\frac{1+\alpha}{2}T^{\nabla}, \ \ \nabla^{(\alpha)} h=\alpha\nabla h, \ \ (d^{\nabla^{(\alpha)}}h)(X,Y,Z)=\frac{1-\alpha}{2}h(T^{\nabla}(X,Y),Z),$$ for any $X,Y,Z\in C^{\infty}(TM)$ and any $\alpha\in \mathbb{R}$, therefore $(h,\nabla^{(\alpha)})$ (with $\alpha\neq 1$) is a statistical structure if and only if $T^{\nabla}=0$, i.e. if and only if $(h,\nabla)$ is a statistical structure;

ii) $$(d^{\hat{\nabla}}\check h)(X+\eta,Y+\beta,Z+\gamma)=(d^{\nabla} h)(X,Y,Z),$$ for any $X,Y,Z\in C^{\infty}(TM)$ and $\eta,\beta,\gamma\in C^{\infty}(T^*M)$, therefore $(\check h, \hat \nabla)$ is a generalized quasi-statistical structure on $TM\oplus T^*M$;

iii) $$(d^{\widehat{\nabla^{(\alpha)}}}\check h)(X+\eta,Y+\beta,Z+\gamma)=(d^{\nabla^{(\alpha)}} h)(X,Y,Z)=\frac{1-\alpha}{2}h(T^{\nabla}(X,Y),Z),$$ for any $X,Y,Z\in C^{\infty}(TM)$, $\eta,\beta,\gamma\in C^{\infty}(T^*M)$ and any $\alpha\in \mathbb{R}$, therefore $(\check h, \widehat{\nabla^{(\alpha)}}=\hat{\nabla}^{(\alpha)})$ (with $\alpha\neq 1$) is a generalized quasi-statistical structure on $TM\oplus T^*M$ if and only if $T^{\nabla}=0$, i.e. if and only if $(h,\nabla)$ is a statistical structure on $M$. In this case: $$T^{\hat{\nabla}^{(\alpha)}}(X+\eta,Y+\beta)=-\frac{1+\alpha}{2}\{(\nabla_Xh)(h^{-1}(\beta))-(\nabla_Yh)(h^{-1}(\eta))\},$$ for any $X,Y\in C^{\infty}(TM)$ and $\eta,\beta\in C^{\infty}(T^*M)$.
\end{remark}

\subsection{Curvature computation}

The curvature of the $\alpha$-connection is given by \cite{zhang}:
$$R^{\hat{\nabla}^{(\alpha)}}(\sigma,\tau)\nu=\frac{1+\alpha}{2}R^{\hat{\nabla}}(\sigma,\tau)\nu+\frac{1-\alpha}{2}R^{\hat{\nabla}^*}(\sigma,\tau)\nu
+(1-\alpha^2)\{\hat{T}(\tau,\hat{T}(\sigma,\nu))-\hat{T}(\sigma,\hat{T}(\tau,\nu))\},$$
for all $\sigma,\tau,\nu\in C^{\infty}(TM\oplus T^*M)$, where $\hat{T}:=\frac{1}{2}(\hat{\nabla}^*-\hat{\nabla})$ is given by:
$$\hat{T}(X+\eta,Y+\beta)=\frac{1}{2}\{h^{-1}(\nabla_X(h(Y)))-\nabla_XY+\nabla_X\beta-h(\nabla_X(h^{-1}(\beta)))\}=$$
$$=\frac{1}{2}\{h^{-1}((\nabla_Xh)(Y))+(\nabla_Xh)(h^{-1}(\beta))\},$$
for all $X,Y\in C^{\infty}(TM)$ and $\eta,\beta\in C^{\infty}(T^*M)$.

\bigskip

We have proved that the curvatures of $\hat{\nabla}$, ${\hat {\nabla}}^*$ and $\nabla$ satisfy \cite{blanan}:
$$R^{\hat {\nabla}}(X+\eta,Y+\beta)(Z+\gamma)=R^{\nabla}(X,Y)Z+h(R^{\nabla}(X,Y)(h^{-1}(\gamma)))$$
and
$$R^{{\hat {\nabla}}^*}(X+\eta,Y+\beta)(Z+\gamma)=h^{-1}(R^{\nabla}(X,Y)(h(Z)))+R^{\nabla}(X,Y)\gamma,$$
for all $X,Y,Z\in C^{\infty}(TM)$ and $\eta,\beta,\gamma\in C^{\infty}(T^*M)$, and we obtain:

\begin{proposition}
Let $\nabla$ be an affine connection on $M$, $h$ a non-degenerate $(0,2)$-tensor field and let $(\check h,\hat{\nabla})$ be the generalized structure on $TM\oplus T^*M$ induced by $(h,\nabla)$. Then the curvature of the $\alpha$-connection defined by $(\hat{\nabla},\hat{\nabla}^*)$ is given by:
$$R^{\hat{\nabla}^{(\alpha)}}(X+\eta,Y+\beta)(Z+\gamma)=
\frac{1+\alpha}{2}R^{\nabla}(X,Y)Z+\frac{1-\alpha}{2}h^{-1}(R^{\nabla}(X,Y)(h(Z)))+
$$
$$+\frac{1-\alpha^2}{4}\{h^{-1}(\nabla_Y\nabla_X(h(Z)))-h^{-1}(\nabla_Y(h(\nabla_XZ)))-\nabla_Y(h^{-1}(\nabla_X(h(Z))))+\nabla_Y\nabla_XZ-$$
$$-h^{-1}(\nabla_X\nabla_Y(h(Z)))+h^{-1}(\nabla_X(h(\nabla_YZ)))+\nabla_X(h^{-1}(\nabla_Y(h(Z))))-\nabla_X\nabla_YZ\}+$$
$$+\frac{1+\alpha}{2}h(R^{\nabla}(X,Y)(h^{-1}(\gamma)))+\frac{1-\alpha}{2}R^{\nabla}(X,Y)\gamma+$$
$$+\frac{1-\alpha^2}{4}\{\nabla_Y\nabla_X\gamma-\nabla_Y(h(\nabla_X(h^{-1}(\gamma))))-h(\nabla_Y(h^{-1}(\nabla_X\gamma)))+h(\nabla_Y\nabla_X(h^{-1}(\gamma)))-$$
$$-\nabla_X\nabla_Y\gamma+\nabla_X(h(\nabla_Y(h^{-1}(\gamma))))+h(\nabla_X(h^{-1}(\nabla_Y\gamma)))-h(\nabla_X\nabla_Y(h^{-1}(\gamma)))\},$$
for all $X,Y,Z\in C^{\infty}(TM)$ and $\eta,\beta,\gamma\in C^{\infty}(T^*M)$.
\end{proposition}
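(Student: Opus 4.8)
The plan is to feed the two facts already recorded above directly into Zhang's curvature identity
$$R^{\hat{\nabla}^{(\alpha)}}(\sigma,\tau)\nu=\frac{1+\alpha}{2}R^{\hat{\nabla}}(\sigma,\tau)\nu+\frac{1-\alpha}{2}R^{\hat{\nabla}^*}(\sigma,\tau)\nu+(1-\alpha^2)\{\hat{T}(\tau,\hat{T}(\sigma,\nu))-\hat{T}(\sigma,\hat{T}(\tau,\nu))\}$$
and then to rewrite the outcome in terms of $\nabla$ and $h$ alone. For the first two summands I would simply insert the known expressions for $R^{\hat{\nabla}}$ and $R^{\hat{\nabla}^*}$: their $TM$-parts give $\frac{1+\alpha}{2}R^{\nabla}(X,Y)Z$ and $\frac{1-\alpha}{2}h^{-1}(R^{\nabla}(X,Y)(h(Z)))$, while their $T^*M$-parts give $\frac{1+\alpha}{2}h(R^{\nabla}(X,Y)(h^{-1}(\gamma)))$ and $\frac{1-\alpha}{2}R^{\nabla}(X,Y)\gamma$. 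These account for the four genuine curvature terms appearing in the statement.

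The substance of the computation lies in the iterated torsion term. The key simplification is that, by its explicit formula, $\hat{T}(X+\eta,Y+\beta)$ depends only on the vector part $X$ of its first argument and splits into a vector part $\frac{1}{2}h^{-1}((\nabla_Xh)(Y))$ and a $1$-form part $\frac{1}{2}(\nabla_Xh)(h^{-1}(\beta))$. Writing $\hat{T}(\sigma,\nu)=\hat{T}(X+\eta,Z+\gamma)$ as such a sum and feeding it into a second application of $\hat{T}$ with first argument $Y+\beta$, only the vector part $Y$ survives and I obtain
$$\hat{T}(\tau,\hat{T}(\sigma,\nu))=\frac{1}{4}\{h^{-1}((\nabla_Yh)(h^{-1}((\nabla_Xh)(Z))))+(\nabla_Yh)(h^{-1}((\nabla_Xh)(h^{-1}(\gamma))))\},$$
the two factors $\frac{1}{2}$ combining with the $(1-\alpha^2)$ of Zhang's identity to produce the coefficient $\frac{1-\alpha^2}{4}$; the term $\hat{T}(\sigma,\hat{T}(\tau,\nu))$ then follows by interchanging $X$ and $Y$.

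Finally I would convert each of these fourth-order expressions into the shape displayed in the statement by applying twice the product rule $(\nabla_Wh)(U)=\nabla_W(h(U))-h(\nabla_WU)$ in the vector slots and the dual identity $(\nabla_Wh)(h^{-1}(\theta))=\nabla_W\theta-h(\nabla_W(h^{-1}(\theta)))$ in the $1$-form slots. For example, $h^{-1}((\nabla_Yh)(h^{-1}((\nabla_Xh)(Z))))$ expands to $h^{-1}(\nabla_Y\nabla_X(h(Z)))-h^{-1}(\nabla_Y(h(\nabla_XZ)))-\nabla_Y(h^{-1}(\nabla_X(h(Z))))+\nabla_Y\nabla_XZ$, which are exactly the first four bracketed terms; the $X\leftrightarrow Y$ contribution supplies the remaining four, and the analogous expansion of the $1$-form factor produces the $\gamma$-bracket. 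I expect the sole real difficulty to be the bookkeeping: one must keep track of which intermediate object is a vector field and which is a $1$-form, and carry the signs correctly both through the antisymmetrization in $X$ and $Y$ and through the two nested applications of the product rule. Collecting all contributions with their respective coefficients then yields precisely the asserted formula.
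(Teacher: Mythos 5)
Your proposal is correct and is essentially the paper's own argument: the paper likewise obtains this proposition by substituting the previously recorded expressions for $R^{\hat{\nabla}}$, $R^{\hat{\nabla}^*}$ and $\hat{T}=\frac{1}{2}(\hat{\nabla}^*-\hat{\nabla})$ into Zhang's curvature identity and expanding the iterated torsion terms. Your key observations — that $\hat{T}$ depends only on the vector part of its first argument, that the two factors of $\frac{1}{2}$ produce the coefficient $\frac{1-\alpha^2}{4}$, and that the expansions via $(\nabla_Wh)(U)=\nabla_W(h(U))-h(\nabla_WU)$ reproduce the eight-term brackets — all check out.
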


Remark that in terms of $\nabla h$, the expression of $R^{\hat{\nabla}^{(\alpha)}}$ becomes:
$$R^{\hat{\nabla}^{(\alpha)}}(X+\eta,Y+\beta)(Z+\gamma)=
\frac{1+\alpha}{2}R^{\nabla}(X,Y)Z+\frac{1-\alpha}{2}h^{-1}(R^{\nabla}(X,Y)(h(Z)))+$$
$$+\frac{1-\alpha^2}{4}\{h^{-1}((\nabla_{[Y,X]}h)(Z))-h^{-1}((\nabla_Yh)(\nabla_XZ))+h^{-1}((\nabla_Xh)(\nabla_YZ))+$$
$$+h^{-1}(h(R^{\nabla}(X,Y)\cdot,Z))+R^{\nabla}(X,Y)Z-\nabla_Y(h^{-1}((\nabla_Xh)(Z)))+\nabla_X(h^{-1}((\nabla_Yh)(Z)))\}+$$
$$+\frac{1+\alpha}{2}h(R^{\nabla}(X,Y)(h^{-1}(\gamma)))+\frac{1-\alpha}{2}R^{\nabla}(X,Y)\gamma+$$
$$+\frac{1-\alpha^2}{4}\{(\nabla_Yh)(h^{-1}((\nabla_Xh)(h^{-1}(\gamma))))-(\nabla_Xh)(h^{-1}((\nabla_Yh)(h^{-1}(\gamma))))\},$$
for all $X,Y,Z\in C^{\infty}(TM)$ and $\eta,\beta,\gamma\in C^{\infty}(T^*M)$ and we get:

\begin{corollary}\label{c}
If $\nabla h=0$, then
$$R^{\hat{\nabla}^{(\alpha)}}(X+\eta,Y+\beta)(Z+\gamma)=
\frac{1+\alpha}{2}R^{\nabla}(X,Y)Z+\frac{1-\alpha}{2}h^{-1}(R^{\nabla}(X,Y)(h(Z)))+$$
$$+\frac{1-\alpha^2}{4}\{h^{-1}(h(R^{\nabla}(X,Y)\cdot,Z))+R^{\nabla}(X,Y)Z\}+$$
$$+\frac{1+\alpha}{2}h(R^{\nabla}(X,Y)(h^{-1}(\gamma)))+\frac{1-\alpha}{2}R^{\nabla}(X,Y)\gamma,$$
for any $\alpha\in \mathbb{R}$.
\end{corollary}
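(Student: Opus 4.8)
The plan is to start directly from the expression of $R^{\hat{\nabla}^{(\alpha)}}$ in terms of $\nabla h$ derived immediately above the statement and simply impose the hypothesis $\nabla h=0$. Since $\nabla h=0$ means that the $(0,3)$-tensor field $\nabla h$ vanishes identically, we have $(\nabla_W h)(\cdot)=0$ as a $1$-form for every vector field $W$; in particular each factor of the form $(\nabla_X h)$, $(\nabla_Y h)$ or $(\nabla_{[Y,X]}h)$ occurring in the formula produces zero, and the computation reduces to bookkeeping which summands survive.

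First I would isolate the coefficient $\frac{1-\alpha^2}{4}$ acting on the $Z$-part. Of the seven summands inside its brace, the five that carry a factor $(\nabla_\bullet h)$, namely $h^{-1}((\nabla_{[Y,X]}h)(Z))$, $-h^{-1}((\nabla_Y h)(\nabla_X Z))$, $h^{-1}((\nabla_X h)(\nabla_Y Z))$, $-\nabla_Y(h^{-1}((\nabla_X h)(Z)))$ and $\nabla_X(h^{-1}((\nabla_Y h)(Z)))$, all vanish. I would note that the last two vanish even though they carry an outer $\nabla_Y$ or $\nabla_X$: because $h^{-1}((\nabla_X h)(Z))$ is the zero vector field, its covariant derivative is again zero. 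What remains inside the brace is exactly $h^{-1}(h(R^{\nabla}(X,Y)\cdot,Z))+R^{\nabla}(X,Y)Z$.

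Next I would treat the coefficient $\frac{1-\alpha^2}{4}$ acting on the $\gamma$-part. Both of its summands, $(\nabla_Y h)(h^{-1}((\nabla_X h)(h^{-1}(\gamma))))$ and $-(\nabla_X h)(h^{-1}((\nabla_Y h)(h^{-1}(\gamma))))$, contain a factor $(\nabla_\bullet h)$ and hence vanish, so this entire bracket drops out. The remaining terms, namely the pure curvature terms $\frac{1+\alpha}{2}R^{\nabla}(X,Y)Z$, $\frac{1-\alpha}{2}h^{-1}(R^{\nabla}(X,Y)(h(Z)))$, $\frac{1+\alpha}{2}h(R^{\nabla}(X,Y)(h^{-1}(\gamma)))$ and $\frac{1-\alpha}{2}R^{\nabla}(X,Y)\gamma$, do not involve $\nabla h$ and are therefore unchanged. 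Collecting the surviving pieces yields precisely the formula in the statement.

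There is essentially no genuine obstacle here, since the argument is a term-by-term specialization of a formula already established; the only point requiring a moment's care is the vanishing of the two summands $\nabla_Y(h^{-1}((\nabla_X h)(Z)))$ and $\nabla_X(h^{-1}((\nabla_Y h)(Z)))$, where one must invoke that $\nabla h=0$ holds globally and not merely at a point, so that the bracketed vector field is identically zero and its covariant derivative vanishes as well.
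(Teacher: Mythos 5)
Your proposal is correct and follows exactly the paper's (implicit) argument: the corollary is obtained by specializing the formula for $R^{\hat{\nabla}^{(\alpha)}}$ in terms of $\nabla h$ stated immediately before it, with every summand carrying a factor $(\nabla_\bullet h)$ vanishing under the hypothesis $\nabla h=0$. Your extra remark that the terms $\nabla_Y(h^{-1}((\nabla_X h)(Z)))$ and $\nabla_X(h^{-1}((\nabla_Y h)(Z)))$ vanish because they are covariant derivatives of the identically zero vector field is exactly the right justification and is the only point of care in the bookkeeping.
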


If $h$ is symmetric and $\nabla h=0$, we have:
$$h(R^{\nabla}(X,Y)\cdot,Z)=-h(R^{\nabla}(X,Y)Z, \cdot)=-h(R^{\nabla}(X,Y)Z),$$
hence:
\begin{corollary}\label{d}
If $h$ is symmetric and $\nabla h=0$, then
$$R^{\hat{\nabla}^{(\alpha)}}(X+\eta,Y+\beta)(Z+\gamma)=R^{\nabla}(X,Y)Z+R^{\nabla}(X,Y)\gamma,$$
for any $\alpha\in \mathbb{R}$.
\end{corollary}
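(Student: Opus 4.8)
The plan is to start from Corollary~\ref{c}, which already records $R^{\hat{\nabla}^{(\alpha)}}$ under the single assumption $\nabla h=0$, and to feed in the extra symmetry of $h$ so that every $\alpha$-dependent coefficient collapses. The one structural fact I would isolate first is that, when $\nabla h=0$ and $h$ is symmetric, the curvature operator is $h$-skew-symmetric:
$$h(R^{\nabla}(X,Y)U,V)=-h(U,R^{\nabla}(X,Y)V),$$
for all $U,V\in C^{\infty}(TM)$. This is immediate from the fact that a parallel tensor has vanishing curvature, so that $(R^{\nabla}(X,Y)\cdot h)(U,V)=-h(R^{\nabla}(X,Y)U,V)-h(U,R^{\nabla}(X,Y)V)=0$. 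This identity is the only nonstandard ingredient the argument requires.

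With it in hand I would rewrite each of the three groups of terms appearing in Corollary~\ref{c}. Using the dual action $(R^{\nabla}(X,Y)\gamma)(W)=-\gamma(R^{\nabla}(X,Y)W)$ on one-forms together with the skew-symmetry identity, the tangent terms satisfy $R^{\nabla}(X,Y)(h(Z))=h(R^{\nabla}(X,Y)Z)$, whence $h^{-1}(R^{\nabla}(X,Y)(h(Z)))=R^{\nabla}(X,Y)Z$; the coefficients $\frac{1+\alpha}{2}$ and $\frac{1-\alpha}{2}$ then sum to $1$ and the tangent contribution reduces to $R^{\nabla}(X,Y)Z$. Dually, $h(R^{\nabla}(X,Y)(h^{-1}(\gamma)))=R^{\nabla}(X,Y)\gamma$, so the cotangent contribution reduces to $R^{\nabla}(X,Y)\gamma$. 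This is where the bulk of the bookkeeping lives, though it is routine once the two displayed identities are fixed.

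It then remains to kill the $\frac{1-\alpha^2}{4}$ term, which is precisely the computation displayed immediately before the statement: skew-symmetry and the symmetry of $h$ give $h(R^{\nabla}(X,Y)\cdot,Z)=-h(R^{\nabla}(X,Y)Z)$, hence $h^{-1}(h(R^{\nabla}(X,Y)\cdot,Z))=-R^{\nabla}(X,Y)Z$, cancelling the accompanying $+R^{\nabla}(X,Y)Z$. The whole $\alpha^{2}$-term therefore vanishes, and assembling the surviving pieces yields $R^{\hat{\nabla}^{(\alpha)}}(X+\eta,Y+\beta)(Z+\gamma)=R^{\nabla}(X,Y)Z+R^{\nabla}(X,Y)\gamma$, manifestly independent of $\alpha$.

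The only genuine subtlety I anticipate is conventional rather than conceptual: one must consistently read $h(Z)$ and $h^{-1}(\gamma)$ as the musical isomorphisms, track at each step whether $R^{\nabla}(X,Y)$ is acting on a vector field or on a one-form, and invoke the symmetry $h(U,V)=h(V,U)$ at the right moment to pass from $h(\,\cdot\,,Z)$ to $h(Z,\,\cdot\,)$. Once these conventions are pinned down, no input beyond Corollary~\ref{c} and the skew-symmetry identity is needed.
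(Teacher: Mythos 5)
Your proposal is correct and follows essentially the same route as the paper: it starts from Corollary \ref{c} and uses the $h$-skew-symmetry of the curvature operator, $h(R^{\nabla}(X,Y)U,V)=-h(U,R^{\nabla}(X,Y)V)$, which is exactly the identity the paper displays just before the corollary. The only difference is one of completeness, in the paper's favor of brevity: you additionally justify that identity via the Ricci identity for the parallel tensor $h$ and spell out the collapse of the $\frac{1\pm\alpha}{2}$ tangent and cotangent terms, steps the paper leaves implicit after its one displayed identity.
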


Also, from $$T^{\hat{\nabla}}(X+\eta,Y+\beta)=T^{\nabla}(X,Y)-\{(\nabla_Xh)(h^{-1}(\beta))-(\nabla_Yh)(h^{-1}(\eta))\}$$
and
$$(\hat{\nabla}\check{h})(X+\eta,Y+\beta,Z+\gamma)=(\nabla_Xh)(Y,Z)+(\nabla_Xh)(h^{-1}(\beta),h^{-1}(\gamma)),$$
under the assumption $T^{\nabla}=0$ and $\nabla h=0$, we get $T^{\hat{\nabla}}=0$ and $\hat{\nabla}\check{h}=0$ and we can state:

\begin{corollary}
If $\nabla$ is the Levi-Civita connection of the pseudo-Riemannian metric $h$ on $M$, then the induced connection $\hat{\nabla}$ on $TM\oplus T^*M$ can be called the Levi-Civita connection of $\check{h}$ and its curvature
is given by:
$$R^{\hat{\nabla}}(X+\eta,Y+\beta)(Z+\gamma)=R^{\nabla}(X,Y)Z+R^{\nabla}(X,Y)\gamma,$$
for all $X,Y,Z\in C^{\infty}(TM)$ and $\eta,\beta,\gamma\in C^{\infty}(T^*M)$.
\end{corollary}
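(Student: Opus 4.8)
The plan is to separate the statement into the naming claim (that $\hat{\nabla}$ deserves to be called the Levi-Civita connection of $\check{h}$) and the curvature formula, and to derive both directly from identities already established above, so that no genuinely new computation is required.

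For the naming claim, I would begin by unpacking the hypothesis: $\nabla$ being the Levi-Civita connection of $h$ means exactly $T^{\nabla}=0$ and $\nabla h=0$. Substituting these two conditions into the two identities displayed immediately before the statement,
$$T^{\hat{\nabla}}(X+\eta,Y+\beta)=T^{\nabla}(X,Y)-\{(\nabla_Xh)(h^{-1}(\beta))-(\nabla_Yh)(h^{-1}(\eta))\},$$
$$(\hat{\nabla}\check{h})(X+\eta,Y+\beta,Z+\gamma)=(\nabla_Xh)(Y,Z)+(\nabla_Xh)(h^{-1}(\beta),h^{-1}(\gamma)),$$
makes every term on each right-hand side vanish, so $T^{\hat{\nabla}}=0$ and $\hat{\nabla}\check{h}=0$. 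These are precisely the torsion-freeness and metric compatibility that characterize a Levi-Civita connection, now for the pair $(\check{h},\hat{\nabla})$ on $TM\oplus T^*M$; this legitimizes the terminology.

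For the curvature, the quickest route is to note that a pseudo-Riemannian metric is in particular symmetric, so the two hypotheses of Corollary \ref{d}, namely $h$ symmetric and $\nabla h=0$, are both met. Since $\hat{\nabla}^{(1)}=\hat{\nabla}$, I would simply evaluate the conclusion of Corollary \ref{d} at $\alpha=1$ to obtain $R^{\hat{\nabla}}(X+\eta,Y+\beta)(Z+\gamma)=R^{\nabla}(X,Y)Z+R^{\nabla}(X,Y)\gamma$, which is the claim. As a self-contained alternative I could instead start from the known expression $R^{\hat{\nabla}}(X+\eta,Y+\beta)(Z+\gamma)=R^{\nabla}(X,Y)Z+h(R^{\nabla}(X,Y)(h^{-1}(\gamma)))$ and simplify the second term: under $\nabla h=0$ with $h$ symmetric, the curvature is $h$-skew (the very identity $h(R^{\nabla}(X,Y)\cdot,Z)=-h(R^{\nabla}(X,Y)Z)$ recorded just above), which forces $h(R^{\nabla}(X,Y)(h^{-1}(\gamma)))=R^{\nabla}(X,Y)\gamma$.

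There is essentially no obstacle here, since the result is a direct specialization of material already in place; the only non-bookkeeping step is recognizing that $\nabla h=0$ lets $h$ pass through the curvature operator, converting the cotangent contribution $h(R^{\nabla}(X,Y)(h^{-1}(\gamma)))$ into $R^{\nabla}(X,Y)\gamma$. This is the identical simplification used to descend from the general curvature formula to Corollary \ref{d}, so invoking that corollary at $\alpha=1$ is the cleanest presentation.
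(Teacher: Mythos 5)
Your proposal is correct and follows essentially the same route as the paper: the naming claim is obtained exactly as in the text preceding the corollary (substituting $T^{\nabla}=0$ and $\nabla h=0$ into the displayed formulas for $T^{\hat{\nabla}}$ and $\hat{\nabla}\check{h}$), and the curvature formula is the specialization of Corollary \ref{d} to $\alpha=1$, i.e.\ to $\hat{\nabla}^{(1)}=\hat{\nabla}$, since a pseudo-Riemannian $h$ is symmetric and $\nabla h=0$. Your self-contained alternative (simplifying $h(R^{\nabla}(X,Y)(h^{-1}(\gamma)))$ to $R^{\nabla}(X,Y)\gamma$ via the $h$-skew-adjointness of the curvature) is the same computation the paper uses to pass from Corollary \ref{c} to Corollary \ref{d}, so it is equally faithful to the paper's argument.
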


\bigskip

Let $\{E_i\}_{1\leq i\leq n}$ be an orthonormal frame field on $M$ with respect to $h$ and consider the orthonormal frame field on $TM\oplus T^*M$ with respect to $\check{h}$ defined by $\{\frac{1}{\sqrt{2}}(E_i+h(E_i)),\frac{1}{\sqrt{2}} (E_i-h(E_i))\}_{1\leq i\leq n}.$ Then the Ricci curvature of $\hat{\nabla}^{(\alpha)}$ is:
$$\Ric^{\hat{\nabla}^{(\alpha)}}(Y+\beta,Z+\gamma):=$$
$$:=\frac{1}{2}\sum_{i=1}^n\{ \check{h}(R^{\hat{\nabla}^{(\alpha)}}(E_i+h(E_i),Y+\beta)(Z+\gamma), E_i+h(E_i))+$$$$+\check{h}(R^{\hat{\nabla}^{(\alpha)}}(E_i-h(E_i),Y+\beta)(Z+\gamma), E_i-h(E_i))\}=$$
$$=\sum_{i=1}^n\{ \check{h}(R^{\hat{\nabla}^{(\alpha)}}(E_i,Y+\beta)(Z+\gamma), E_i)+\check{h}(R^{\hat{\nabla}^{(\alpha)}}(h(E_i),Y+\beta)(Z+\gamma), h(E_i))\}=$$
$$=\sum_{i=1}^n \check{h}(R^{\hat{\nabla}^{(\alpha)}}(E_i,Y+\beta)(Z+\gamma), E_i)=$$
$$=\frac{1+\alpha}{2}\Ric^{\nabla}(Y,Z)+\frac{1-\alpha}{2}\sum_{i=1}^n(R^{\nabla}(E_i,Y)(h(Z)))(E_i)+$$
$$+\frac{1-\alpha^2}{4}\sum_{i=1}^n\{(R^{\nabla}(Y,E_i)(h(Z)))(E_i)+({\nabla}_{[Y,E_i]}(h(Z)))(E_i)-h({\nabla}_Y(h^{-1}({\nabla}_{E_i}(h(Z)))),E_i)+$$
$$+h(R^{\nabla}(Y,E_i)Z,E_i)+h({\nabla}_{[Y,E_i]}Z,E_i)-({\nabla}_Y(h({\nabla}_{E_i}Z)))(E_i)+$$
$$+({\nabla}_{E_i}(h({\nabla}_YZ)))(E_i)+h({\nabla}_{E_i}(h^{-1}({\nabla}_Y(h(Z)))),E_i)\}.$$

If $h$ is symmetric, we get:
$$\sum_{i=1}^n(R^{\nabla}(E_i,Y)(h(Z)))(E_i)=-\sum_{i=1}^nh(R^{\nabla}(E_i,Y)E_i,Z),$$
and we can state:

\begin{proposition}\label{p}
Let $\nabla$ be an affine connection on $M$, $h$ a non-degenerate $(0,2)$-tensor field and let $(\check h,\hat{\nabla})$ be the generalized structure on $TM\oplus T^*M$ induced by $(h,\nabla)$. If $h$ is symmetric, then the Ricci curvature of the $\alpha$-connection defined by $(\hat{\nabla},\hat{\nabla}^*)$ is given by:
$$\Ric^{\hat{\nabla}^{(\alpha)}}(Y+\beta,Z+\gamma)=\left(\frac{1+\alpha}{2}\right)^2\Ric^{\nabla}(Y,Z)+
\left(\frac{1-\alpha}{2}\right)^2\sum_{i=1}^n(R^{\nabla}(E_i,Y)(h(Z)))(E_i)+$$$$+\frac{1-\alpha^2}{4}\sum_{i=1}^n
\{({\nabla}_{[Y,E_i]}(h(Z)))(E_i)-({\nabla}_Y(h({\nabla}_{E_i}Z)))(E_i)+({\nabla}_{E_i}(h({\nabla}_YZ)))(E_i)+$$
$$+h({\nabla}_{[Y,E_i]}Z,E_i)-h({\nabla}_Y(h^{-1}({\nabla}_{E_i}(h(Z)))),E_i)+h({\nabla}_{E_i}(h^{-1}({\nabla}_Y(h(Z)))),E_i)\},$$
for all $Y,Z\in C^{\infty}(TM)$ and $\beta,\gamma\in C^{\infty}(T^*M)$.
\end{proposition}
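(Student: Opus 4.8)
The plan is to start from the general expression for $\Ric^{\hat{\nabla}^{(\alpha)}}(Y+\beta,Z+\gamma)$ displayed just above the statement, obtained by inserting the curvature formula derived earlier into the trace over the $\check{h}$-orthonormal frame $\{\frac{1}{\sqrt 2}(E_i\pm h(E_i))\}$. I would reuse the two reductions already carried out there: the cross-terms between $E_i$ and $h(E_i)$ cancel between the two sign choices, and the term having the pure covector $h(E_i)$ in the first curvature slot vanishes, since $R^{\hat{\nabla}^{(\alpha)}}$ depends only on the $TM$-parts of its first two arguments; hence the trace collapses to $\sum_i\check{h}(R^{\hat{\nabla}^{(\alpha)}}(E_i,Y+\beta)(Z+\gamma),E_i)$. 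Moreover, pairing the output against the pure vector $E_i$ through $\check{h}$ selects only its $TM$-component, so the $\beta,\gamma$-dependence drops out and only the $TM$-valued, $Z$-dependent part of the curvature contributes.

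The heart of the argument, under the hypothesis that $h$ is symmetric, is to recognise that exactly two of the eight summands in the $\frac{1-\alpha^2}{4}$-bracket are bare curvature terms of the same type as the two leading terms, and to telescope them into the leading coefficients. Concretely, I would pair the summand $(R^{\nabla}(Y,E_i)(h(Z)))(E_i)$ with the leading term $\frac{1-\alpha}{2}\sum_i(R^{\nabla}(E_i,Y)(h(Z)))(E_i)$, using the antisymmetry $R^{\nabla}(Y,E_i)=-R^{\nabla}(E_i,Y)$ in the first two slots; and I would pair the summand $h(R^{\nabla}(Y,E_i)Z,E_i)$ with the leading term $\frac{1+\alpha}{2}\Ric^{\nabla}(Y,Z)$, using the same antisymmetry together with the trace convention $\Ric^{\nabla}(Y,Z)=\sum_i h(R^{\nabla}(E_i,Y)Z,E_i)$, so that $\sum_i h(R^{\nabla}(Y,E_i)Z,E_i)=-\Ric^{\nabla}(Y,Z)$. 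Here the symmetry of $h$ is what guarantees both the existence of the $h$-orthonormal frame and the trace identity, as well as the relation $(R^{\nabla}(E_i,Y)(h(Z)))(E_i)=-h(R^{\nabla}(E_i,Y)E_i,Z)$ obtained by passing the curvature through the $1$-form $h(Z)$ and swapping the arguments of $h$.

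The coefficients then fall out of the elementary identities
$$\frac{1+\alpha}{2}-\frac{1-\alpha^2}{4}=\left(\frac{1+\alpha}{2}\right)^2,\qquad\frac{1-\alpha}{2}-\frac{1-\alpha^2}{4}=\left(\frac{1-\alpha}{2}\right)^2,$$
which convert the two absorbed contributions into the squared leading coefficients $\left(\frac{1+\alpha}{2}\right)^2\Ric^{\nabla}(Y,Z)$ and $\left(\frac{1-\alpha}{2}\right)^2\sum_i(R^{\nabla}(E_i,Y)(h(Z)))(E_i)$. The remaining six summands of the bracket carry no bare curvature factor that can be antisymmetrised against a leading term, so they are left untouched and reassemble into the stated $\frac{1-\alpha^2}{4}$-bracket, yielding the claimed formula.

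I expect the main obstacle to be organisational rather than conceptual: correctly bookkeeping the eight bracket summands together with their signs, and in particular the sign produced when the curvature operator acts on the $1$-form $h(Z)$, so as to identify unambiguously the two summands of \emph{leading type} that telescope. Once that identification is secured, the collapse of the coefficients into squares is immediate from the two algebraic identities above, and no further analysis of the six surviving terms is required.
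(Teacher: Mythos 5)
Your proposal is correct and follows essentially the same route as the paper: the paper's own derivation is precisely the displayed trace computation over the frame $\{\frac{1}{\sqrt 2}(E_i\pm h(E_i))\}$, followed (implicitly) by absorbing the two bare curvature summands $(R^{\nabla}(Y,E_i)(h(Z)))(E_i)$ and $h(R^{\nabla}(Y,E_i)Z,E_i)$ into the leading terms via antisymmetry of $R^{\nabla}$ in its first two slots, which yields the squared coefficients through exactly your identities $\frac{1\pm\alpha}{2}-\frac{1-\alpha^2}{4}=\left(\frac{1\pm\alpha}{2}\right)^2$. Your accounting of where the symmetry of $h$ enters (orthonormal frame, trace convention for $\Ric^{\nabla}$, and passing $R^{\nabla}$ through the $1$-form $h(Z)$) matches the paper's remarks surrounding the statement.
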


Remark that in terms of $\nabla h$, the expression of $\Ric^{\hat{\nabla}^{(\alpha)}}$ becomes:
$$\Ric^{\hat{\nabla}^{(\alpha)}}(Y+\beta,Z+\gamma)=\frac{1+\alpha}{2}\Ric^{\nabla}(Y,Z)+\frac{1-\alpha}{2}\sum_{i=1}^n(R^{\nabla}(E_i,Y)(h(Z)))(E_i)+$$
$$+\frac{1-\alpha^2}{4}\sum_{i=1}^n\{({\nabla}_{[Y,E_i]}h)(Z,E_i)-({\nabla}_Yh)({\nabla}_{E_i}Z,E_i)+({\nabla}_{E_i}h)({\nabla}_YZ,E_i)-$$
$$-h({\nabla}_Y(h^{-1}(({\nabla}_{E_i}h)(Z))),E_i)+h({\nabla}_{E_i}(h^{-1}(({\nabla}_Yh)(Z))),E_i)\},$$
for all $Y,Z\in C^{\infty}(TM)$ and $\beta,\gamma\in C^{\infty}(T^*M)$.

\medskip

If $h$ is symmetric and $\nabla h=0$, we have:
$$\Ric^{\nabla}(Y,Z):=\sum_{i=1}^nh(R^{\nabla}(E_i,Y)Z,E_i)=$$
$$=-\sum_{i=1}^nh(R^{\nabla}(E_i,Y)E_i,Z)=\sum_{i=1}^n(R^{\nabla}(E_i,Y)(h(Z)))(E_i),$$
hence:

\begin{corollary} \label{c1} If $h$ is symmetric and $\nabla h=0$, then
$$\Ric^{\hat{\nabla}^{(\alpha)}}(Y+\beta,Z+\gamma)=\Ric^{\nabla}(Y,Z)$$
and
$$\scal^{(\check{h}, \hat{\nabla}^{(\alpha)})}=\scal^{(h,\nabla)},$$
for any $\alpha\in \mathbb{R}$, hence the Ricci curvature tensor field of $\hat{\nabla}^{(\alpha)}$ is symmetric if and only if the Ricci curvature tensor field of $\nabla$ is symmetric.
\end{corollary}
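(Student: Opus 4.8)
The plan is to start from the closed-form expression for $\Ric^{\hat{\nabla}^{(\alpha)}}$ recorded in Proposition \ref{p}, in the reformulation in terms of $\nabla h$ displayed immediately after it, and simply impose the hypothesis $\nabla h=0$. In that reformulation every summand carrying the factor $\frac{1-\alpha^2}{4}$ contains at least one occurrence of $\nabla h$, either directly as a term $({\nabla}_{\cdot}h)(\cdot,\cdot)$ or hidden inside the expressions $h({\nabla}_{\cdot}(h^{-1}(({\nabla}_{\cdot}h)(Z))),E_i)$. Hence under $\nabla h=0$ the entire $\frac{1-\alpha^2}{4}$ block drops out, leaving
$$\Ric^{\hat{\nabla}^{(\alpha)}}(Y+\beta,Z+\gamma)=\frac{1+\alpha}{2}\Ric^{\nabla}(Y,Z)+\frac{1-\alpha}{2}\sum_{i=1}^n(R^{\nabla}(E_i,Y)(h(Z)))(E_i).$$

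Next I would invoke the identity established just before the statement, namely that for $h$ symmetric and $\nabla h=0$ one has $\sum_{i=1}^n(R^{\nabla}(E_i,Y)(h(Z)))(E_i)=\Ric^{\nabla}(Y,Z)$; this rests on the skew-adjointness of the curvature endomorphism for a metric connection, $h(R^{\nabla}(E_i,Y)Z,E_i)=-h(R^{\nabla}(E_i,Y)E_i,Z)$, together with the symmetry of $h$. Substituting this into the displayed expression collapses the two coefficients via $\frac{1+\alpha}{2}+\frac{1-\alpha}{2}=1$, yielding $\Ric^{\hat{\nabla}^{(\alpha)}}(Y+\beta,Z+\gamma)=\Ric^{\nabla}(Y,Z)$, independent of $\alpha$ and of the covector arguments $\beta,\gamma$.

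For the scalar curvature I would trace this Ricci tensor against $\check{h}$ in the orthonormal frame $\{\frac{1}{\sqrt{2}}(E_i\pm h(E_i))\}_{1\leq i\leq n}$. A short computation with (\ref{m3}) confirms that this frame is $\check{h}$-orthonormal, with $\check{h}(\frac{1}{\sqrt{2}}(E_i\pm h(E_i)),\frac{1}{\sqrt{2}}(E_i\pm h(E_i)))=h(E_i,E_i)$ and vanishing cross terms. Since $\Ric^{\hat{\nabla}^{(\alpha)}}$ now depends only on the tangent parts of its arguments, each diagonal contribution equals $\frac{1}{2}\Ric^{\nabla}(E_i,E_i)$ by bilinearity, so the two signs $+$ and $-$ together reconstruct one full $\Ric^{\nabla}(E_i,E_i)$ per index $i$. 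Weighting each term by the signature sign $\varepsilon_i$ of $h$ as in the definition of scalar curvature then gives $\scal^{(\check{h},\hat{\nabla}^{(\alpha)})}=\sum_{i=1}^n\varepsilon_i\Ric^{\nabla}(E_i,E_i)=\scal^{(h,\nabla)}$.

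Finally, the equivalence of symmetries is immediate: because the formula $\Ric^{\hat{\nabla}^{(\alpha)}}(Y+\beta,Z+\gamma)=\Ric^{\nabla}(Y,Z)$ makes no reference to $\beta,\gamma$, interchanging the two generalized arguments amounts to interchanging precisely $Y$ and $Z$, so $\Ric^{\hat{\nabla}^{(\alpha)}}$ is symmetric if and only if $\Ric^{\nabla}$ is. The computation is essentially mechanical once Proposition \ref{p} is in hand; the only place demanding care is the scalar-curvature bookkeeping, where one must correctly account for the factors $\frac{1}{2}$ arising from the normalization $\frac{1}{\sqrt{2}}$ of the frame and for the signature weights $\varepsilon_i$, so that the $2n$-fold sum over $TM\oplus T^*M$ reproduces exactly the $n$-fold trace defining $\scal^{(h,\nabla)}$.
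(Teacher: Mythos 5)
Your proposal is correct and follows essentially the same route as the paper: it uses the reformulation of Proposition \ref{p} in terms of $\nabla h$ (so the whole $\frac{1-\alpha^2}{4}$ block vanishes under $\nabla h=0$), then the identity $\sum_{i=1}^n(R^{\nabla}(E_i,Y)(h(Z)))(E_i)=\Ric^{\nabla}(Y,Z)$ coming from skew-adjointness of the curvature of a metric connection, and finally $\frac{1+\alpha}{2}+\frac{1-\alpha}{2}=1$. Your explicit verification that $\{\frac{1}{\sqrt{2}}(E_i\pm h(E_i))\}$ is $\check{h}$-orthonormal and the bookkeeping for the scalar-curvature trace merely spell out what the paper leaves implicit, so no further comment is needed.
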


\bigskip

Recall that a manifold $M$ with a dualistic structure $(h;\nabla, \nabla^*)$ is called \textit{conjugate Ricci-symmetric} \cite{Min} if $\Ric^{\nabla}=\Ric^{\nabla^*}$. From Corollary \ref{c1} we can state:

\begin{proposition}
Let $\nabla$ be an affine connection on $M$, $h$ a non-degenerate $(0,2)$-tensor field and let $(\check h,\hat{\nabla})$ be the generalized structure on $TM\oplus T^*M$ induced by $(h,\nabla)$. If $h$ is symmetric and $\nabla h=0$, then $$\Ric^{\hat{\nabla}^{(\alpha)}}(Y+\beta,Z+\gamma)=\Ric^{\nabla}(Y,Z)=\Ric^{\hat{\nabla}^{(-\alpha)}}(Y+\beta,Z+\gamma),$$
for any $Y,Z\in C^{\infty}(TM)$, $\beta,\gamma\in C^{\infty}(T^*M)$ and any $\alpha\in \mathbb{R}$, i.e. $(TM\oplus T^*M, \check{h},\hat{\nabla}^{(\alpha)},\hat{\nabla}^{(-\alpha)})$ is a conjugate Ricci-symmetric manifold.
\end{proposition}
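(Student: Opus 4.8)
The plan is to derive this statement directly from Corollary \ref{c1}, which has already done the heavy lifting. The crucial point is that Corollary \ref{c1} establishes the identity $\Ric^{\hat{\nabla}^{(\alpha)}}(Y+\beta,Z+\gamma)=\Ric^{\nabla}(Y,Z)$ for \emph{every} $\alpha\in\mathbb{R}$, under exactly the standing hypotheses assumed here, namely that $h$ is symmetric and $\nabla h=0$. Since $-\alpha$ ranges over $\mathbb{R}$ exactly as $\alpha$ does, Corollary \ref{c1} applies verbatim with $\alpha$ replaced by $-\alpha$, and the whole argument reduces to reading off two instances of that corollary and combining them.

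First I would record the two instances. Applying Corollary \ref{c1} with parameter $\alpha$ yields
$$\Ric^{\hat{\nabla}^{(\alpha)}}(Y+\beta,Z+\gamma)=\Ric^{\nabla}(Y,Z),$$
and applying it with parameter $-\alpha$ yields
$$\Ric^{\hat{\nabla}^{(-\alpha)}}(Y+\beta,Z+\gamma)=\Ric^{\nabla}(Y,Z).$$
Chaining these two identities through their common right-hand side $\Ric^{\nabla}(Y,Z)$ immediately produces the displayed chain of equalities in the statement, valid for all $Y,Z\in C^{\infty}(TM)$, all $\beta,\gamma\in C^{\infty}(T^*M)$ and all $\alpha\in\mathbb{R}$.

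To upgrade this to the assertion that $(TM\oplus T^*M,\check{h},\hat{\nabla}^{(\alpha)},\hat{\nabla}^{(-\alpha)})$ is conjugate Ricci-symmetric, I would invoke the definition recalled just before the statement: a dualistic structure is conjugate Ricci-symmetric precisely when the Ricci tensors of the two paired connections coincide. The pairing itself is supplied by the earlier Remark, which records that the dual connection of $\hat{\nabla}^{(\alpha)}$ with respect to $\check{h}$ is $\hat{\nabla}^{(-\alpha)}$; hence $(\check{h};\hat{\nabla}^{(\alpha)},\hat{\nabla}^{(-\alpha)})$ is genuinely a dualistic structure, and the equality $\Ric^{\hat{\nabla}^{(\alpha)}}=\Ric^{\hat{\nabla}^{(-\alpha)}}$ obtained above is exactly the required conjugate Ricci-symmetry condition.

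I do not anticipate any genuine obstacle here: all the analytic content—the cancellation of the $\alpha$-dependent curvature terms that renders $\Ric^{\hat{\nabla}^{(\alpha)}}$ independent of $\alpha$—is already packaged in Corollary \ref{c1} and in the Ricci computation preceding it. The only point demanding care is the bookkeeping of hypotheses: one must confirm that the symmetry of $h$ together with $\nabla h=0$ are the sole assumptions used in Corollary \ref{c1}, so that nothing prevents its reapplication at $-\alpha$ in place of $\alpha$.
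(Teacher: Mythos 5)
Your proposal is correct and follows exactly the paper's own route: the paper states this proposition immediately after recalling the definition of conjugate Ricci-symmetry, with the words ``From Corollary \ref{c1} we can state,'' i.e.\ it too reads off Corollary \ref{c1} at $\alpha$ and at $-\alpha$ (using the earlier remark that the dual of $\hat{\nabla}^{(\alpha)}$ is $\hat{\nabla}^{(-\alpha)}$) and chains the two equalities. Your extra care in checking that the hypotheses of Corollary \ref{c1} coincide with those of the proposition is exactly the right bookkeeping, and nothing further is needed.
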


On the other hand, an affine connection on $M$ is called \textit{equiaffine} \cite{ns} if it admits a parallel volume form on $M$.
It is known that \cite{ns} the necessary and sufficient condition for a torsion-free affine connection to be equiaffine is that the Ricci tensor to be symmetric. For a given quasi-statistical structure $(h,\nabla)$, since the dual connection $\hat \nabla^*$ of $\hat \nabla$ is torsion-free, from Proposition \ref{p}, we can state:

\begin{proposition}
Let $(h,\nabla)$ be a quasi-statistical structure on $M$ with $h$ symmetric. Then the dual connection $\hat{\nabla}^{*}$ of the generalized connection $\hat \nabla$ induced by $(h,\nabla)$ is an equiaffine connection on $TM\oplus T^*M$ if and only if
$$\sum_{i=1}^n\{h(R^{\nabla}(Y,E_i)E_i,Z)-h(R^{\nabla}(Z,E_i)E_i,Y)\}=0,$$
for any $Y,Z\in C^{\infty}(TM)$.
\end{proposition}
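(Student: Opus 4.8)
The plan is to reduce the equiaffine property of $\hat{\nabla}^{*}$ to the symmetry of its Ricci tensor, and then to read that symmetry directly off Proposition \ref{p} evaluated at $\alpha=-1$.

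First I would record that $\hat{\nabla}^{*}=\hat{\nabla}^{(-1)}$ is torsion-free, so that the criterion of \cite{ns} applies. This is already asserted in the text preceding the statement and is consistent with the Lemma, but it can also be seen at once from the torsion formula for $\hat{\nabla}^{(\alpha)}$: putting $\alpha=-1$ kills the coefficient $\frac{1+\alpha}{2}$ and leaves
$$T^{\hat{\nabla}^{*}}(X+\eta,Y+\beta)=T^{\nabla}(X,Y)+\{h^{-1}((\nabla_Xh)(Y))-h^{-1}((\nabla_Yh)(X))\}.$$
Since $(h,\nabla)$ is quasi-statistical, $d^{\nabla}h=0$ gives $(\nabla_Xh)(Y,\cdot)-(\nabla_Yh)(X,\cdot)=-h(T^{\nabla}(X,Y),\cdot)$, and applying $h^{-1}$ turns the bracketed term into $-T^{\nabla}(X,Y)$, so $T^{\hat{\nabla}^{*}}=0$. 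By the cited result, a torsion-free affine connection is equiaffine if and only if its Ricci tensor is symmetric; hence the proposition reduces to deciding when $\Ric^{\hat{\nabla}^{*}}$ is symmetric.

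Next I would compute $\Ric^{\hat{\nabla}^{*}}=\Ric^{\hat{\nabla}^{(-1)}}$ by substituting $\alpha=-1$ in the formula of Proposition \ref{p}. There the coefficients $\left(\frac{1+\alpha}{2}\right)^2$ and $\frac{1-\alpha^2}{4}$ both vanish while $\left(\frac{1-\alpha}{2}\right)^2=1$, so only a single term survives:
$$\Ric^{\hat{\nabla}^{*}}(Y+\beta,Z+\gamma)=\sum_{i=1}^n(R^{\nabla}(E_i,Y)(h(Z)))(E_i).$$
The crucial structural point is that this expression does not involve the covector parts $\beta,\gamma$. Consequently $\Ric^{\hat{\nabla}^{*}}$, viewed as a bilinear form on $TM\oplus T^*M$, is symmetric if and only if the bilinear form $\Phi(Y,Z):=\sum_{i=1}^n(R^{\nabla}(E_i,Y)(h(Z)))(E_i)$ on $TM$ is symmetric in $Y$ and $Z$.

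Finally I would translate $\Phi(Y,Z)=\Phi(Z,Y)$ into the stated curvature identity. Using that $h$ is symmetric, I invoke the identity established just before Proposition \ref{p}, namely $\sum_i(R^{\nabla}(E_i,Y)(h(Z)))(E_i)=-\sum_ih(R^{\nabla}(E_i,Y)E_i,Z)$, which rewrites the symmetry of $\Phi$ as $\sum_i\{h(R^{\nabla}(E_i,Y)E_i,Z)-h(R^{\nabla}(E_i,Z)E_i,Y)\}=0$; then the antisymmetry of the curvature in its first pair of arguments, $R^{\nabla}(E_i,\cdot)=-R^{\nabla}(\cdot,E_i)$, converts this into $\sum_i\{h(R^{\nabla}(Y,E_i)E_i,Z)-h(R^{\nabla}(Z,E_i)E_i,Y)\}=0$, which is exactly the claimed condition. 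The only genuinely delicate point is the bookkeeping of this last step — combining the sign identity valid for symmetric $h$ with the curvature antisymmetry so as to land on the precise form of the stated sum — together with the easy but essential observation that $\beta$ and $\gamma$ drop out, so that symmetry on the generalized bundle is equivalent to symmetry of a tensor living on $M$.
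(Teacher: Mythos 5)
Your proof is correct and follows essentially the same route as the paper: torsion-freeness of $\hat{\nabla}^{*}=\hat{\nabla}^{(-1)}$ (via $d^{\nabla}h=0$), the Nomizu--Sasaki criterion reducing equiaffinity to symmetry of the Ricci tensor, and then Proposition \ref{p} at $\alpha=-1$ together with the identity $\sum_i(R^{\nabla}(E_i,Y)(h(Z)))(E_i)=-\sum_ih(R^{\nabla}(E_i,Y)E_i,Z)$ and curvature antisymmetry. You merely spell out the details the paper leaves implicit (the vanishing of the torsion and the final sign bookkeeping), so nothing further is needed.
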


\section{Integrability of generalized structures with respect to $[\cdot,\cdot]_{\nabla^{(\alpha)}}$}

\subsection{$\nabla^{(\alpha)}$-bracket}
Let $h$ be a non-degenerate symmetric or skew-symmetric $(0,2)$-tensor field on the smooth manifold $M$, let $\nabla$ be a torsion-free affine connection on $M$ such that $(h,\nabla)$ is a statistical structure.

The bracket induced by $\nabla^*$ is given by:
$$[X+\eta,Y+\beta]_{\nabla^{*}}=[X+\eta,Y+\beta]_{\nabla}-\{(\nabla_Xh)(h^{-1}(\beta))-(\nabla_Yh)(h^{-1}(\eta))\}.$$

Let $[\cdot,\cdot]_{\nabla^{(\alpha)}}$ be the bracket defined by the $\alpha$-connection $\nabla^{(\alpha)}$. Then:
$$[X+\eta,Y+\beta]_{\nabla^{(\alpha)}}:=[X,Y]+\nabla^{(\alpha)}_X\beta-\nabla^{(\alpha)}_Y\eta=$$
$$=[X+\eta,Y+\beta]_{\nabla}-\frac{1-\alpha}{2}\{(\nabla_Xh)(h^{-1}(\beta))-(\nabla_Yh)(h^{-1}(\eta))\},$$
for any $X,Y\in C^{\infty}(TM)$ and $\eta,\beta\in C^{\infty}(T^*M)$.

\subsection{$h$-symmetric $(1,1)$-tensor fields}
Let $h$ be a non-degenerate symmetric $(0,2)$-tensor field and let $\nabla$ be a torsion-free affine connection on $M$. Let $J:TM \rightarrow TM$ be a $h$-symmetric $(1,1)$-tensor field, that is, $h(JX,Y)=h(X,JY)$, for any $X,Y\in C^{\infty}(TM)$ and let $(J^*\eta)(X):=\eta(JX)$, for any $X\in C^{\infty}(TM)$ and $\eta\in C^{\infty}(T^*M)$.

We consider the tensor field $F$ defined by:
$$F(X,Y,Z):=h((\nabla_X J)Y,Z),$$
for any $X, Y,Z \in C^{\infty}(TM)$,
which is very important in the classification of almost complex structures. In the following, we will relate some properties of $F$ to the theory of $\alpha$-connections. First of all, we prove the following:

\begin{lemma} For any $X, Y \in C^{\infty}(TM)$, the following holds:
 $$(\nabla_{X}h)(JY)-J^*((\nabla_{X}h)(Y))=-F(X,Y,\cdot)+F(X,\cdot,Y).$$
\end{lemma}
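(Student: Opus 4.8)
The plan is to verify the identity by evaluating both sides---which are $1$-forms on $M$---on an arbitrary vector field $Z\in C^\infty(TM)$ and checking the resulting scalar equality pointwise. Unfolding the definition of $J^*$, the left-hand side evaluated at $Z$ is $(\nabla_X h)(JY,Z)-(\nabla_X h)(Y,JZ)$, while the right-hand side evaluated at $Z$ is $-F(X,Y,Z)+F(X,Z,Y)=-h((\nabla_X J)Y,Z)+h((\nabla_X J)Z,Y)$. Hence the whole statement reduces to the scalar identity
$$(\nabla_X h)(JY,Z)-(\nabla_X h)(Y,JZ)=-h((\nabla_X J)Y,Z)+h((\nabla_X J)Z,Y).$$

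First I would expand each term $(\nabla_X h)(\,\cdot\,,\,\cdot\,)$ via the Leibniz rule $(\nabla_X h)(A,B)=X(h(A,B))-h(\nabla_X A,B)-h(A,\nabla_X B)$. The first simplification uses the $h$-symmetry of $J$: since $h(JY,Z)=h(Y,JZ)$, the scalar-derivative terms $X(h(JY,Z))$ and $X(h(Y,JZ))$ coincide and cancel in the difference, leaving an expression written entirely through $h$ applied to covariant derivatives.

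Next I would insert the product rule $\nabla_X(JW)=(\nabla_X J)W+J(\nabla_X W)$ for $W=Y$ and $W=Z$ wherever $\nabla_X(JY)$ or $\nabla_X(JZ)$ occurs. The terms containing $J(\nabla_X Y)$ and $J(\nabla_X Z)$ can then be transported across the two slots of $h$ using the $h$-symmetry of $J$ once more; this makes them cancel pairwise against the surviving terms $h(\nabla_X Y,JZ)$ and $h(JY,\nabla_X Z)$. What remains is precisely $-h((\nabla_X J)Y,Z)+h(Y,(\nabla_X J)Z)$, and a final appeal to the symmetry of $h$ (not of $J$) rewrites $h(Y,(\nabla_X J)Z)$ as $h((\nabla_X J)Z,Y)$, which is exactly the right-hand side.

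The argument is wholly computational and needs no hypothesis beyond the symmetry of $h$ and the $h$-symmetry of $J$; in particular neither the torsion-freeness of $\nabla$ nor any Codazzi-type condition enters here. The only delicate point---and the step where a sign slip is most likely---is keeping track of which symmetry is being invoked at each cancellation, since the $h$-symmetry of $J$ and the symmetry of $h$ interlock through $h(JA,B)=h(A,JB)=h(B,JA)$.
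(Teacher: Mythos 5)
Your proof is correct and follows essentially the same route as the paper's: both evaluate the two $1$-forms on an arbitrary vector field, expand $(\nabla_X h)$ by the Leibniz rule, cancel the derivative terms via $h(JY,\cdot)=h(Y,J\cdot)$, and use the product rule for $\nabla_X(J\,\cdot)$ together with the symmetry of $h$ to land on $-F(X,Y,\cdot)+F(X,\cdot,Y)$. Your added remark that only the symmetry of $h$ and the $h$-symmetry of $J$ are used (no torsion-freeness or Codazzi condition) is accurate and consistent with the paper's computation.
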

\begin{proof} We have:
$$((\nabla_{X}h)(JY)-J^*((\nabla_{X}h)(Y)))(T)=$$
$$=X(h(JY,T))-h(\nabla_X JY,T)-h(JY,\nabla_X T)-X(h(Y,JT))+h(\nabla_X Y,JT)+h(Y,\nabla_X JT)=$$
$$=-h((\nabla_X J)Y,T)+h(Y,(\nabla_X J)T)=F(X,T,Y)-F(X,Y,T),$$
for any $X, Y, T \in C^{\infty}(TM).$
\end{proof}
Moreover:
\begin{lemma} If $(M,h,\nabla)$ is a statistical manifold, then:
$$((\nabla_{X}h)(JY)-(\nabla_{Y}h)(JX)=0$$
if and only if
$$F(X,\cdot,Y)-F(X,Y,\cdot)-F(Y,\cdot,X)+F(Y,X,\cdot)=0,$$
for any $X, Y \in C^{\infty}(TM).$
\end{lemma}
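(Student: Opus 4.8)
The plan is to deduce the stated equivalence directly from the preceding lemma by symmetrizing in $X$ and $Y$ and then invoking the statistical (Codazzi) hypothesis to cancel the leftover term. Concretely, the preceding lemma gives, as an identity of $1$-forms,
$$(\nabla_X h)(JY) - J^*((\nabla_X h)(Y)) = -F(X,Y,\cdot) + F(X,\cdot,Y),$$
and, after interchanging $X$ and $Y$,
$$(\nabla_Y h)(JX) - J^*((\nabla_Y h)(X)) = -F(Y,X,\cdot) + F(Y,\cdot,X).$$

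Next I would subtract these two identities. The left-hand side becomes
$$\big[(\nabla_X h)(JY) - (\nabla_Y h)(JX)\big] - J^*\big[(\nabla_X h)(Y) - (\nabla_Y h)(X)\big],$$
while the right-hand side is exactly $F(X,\cdot,Y) - F(X,Y,\cdot) - F(Y,\cdot,X) + F(Y,X,\cdot)$.

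The single substantive step is then to observe that, because $(M,h,\nabla)$ is a statistical manifold, the Codazzi condition $(\nabla_X h)(Y,Z) = (\nabla_Y h)(X,Z)$ holds for every $Z$; this says precisely that the $1$-forms $(\nabla_X h)(Y)$ and $(\nabla_Y h)(X)$ coincide, so the term $J^*\big[(\nabla_X h)(Y) - (\nabla_Y h)(X)\big]$ vanishes identically (as $J^*$ is linear on $1$-forms). What survives is the clean identity
$$(\nabla_X h)(JY) - (\nabla_Y h)(JX) = F(X,\cdot,Y) - F(X,Y,\cdot) - F(Y,\cdot,X) + F(Y,X,\cdot),$$
valid for all $X,Y \in C^{\infty}(TM)$. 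Since the two sides are literally equal, one vanishes if and only if the other does, which is the asserted equivalence.

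I do not expect any genuine obstacle here: the argument is essentially bookkeeping on top of the previous lemma, and the only point requiring care is recognizing that the statistical hypothesis forces $(\nabla_X h)(Y)$ to be symmetric in $X$ and $Y$ as a $1$-form, so that applying the linear operator $J^*$ annihilates the antisymmetric combination. Should one prefer not to quote the previous lemma, the same identity follows from scratch by expanding $((\nabla_X h)(JY))(T)$ and $((\nabla_Y h)(JX))(T)$ through the definition of $\nabla h$ together with the $h$-symmetry of $J$, but routing the computation through the previous lemma is considerably shorter.
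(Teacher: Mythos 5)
Your proposal is correct and follows essentially the same route as the paper: both apply the preceding lemma to $(X,Y)$ and $(Y,X)$, subtract, and use the Codazzi condition $(\nabla_X h)(Y)=(\nabla_Y h)(X)$ to annihilate the $J^*$ term, leaving the exact identity between the two expressions. The paper's proof is just a terser statement of the same bookkeeping, so there is nothing to add.
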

\begin{proof} From the previous lemma, we get:
$$(\nabla_{X}h)(JY)-(\nabla_{Y}h)(JX)=J^*((\nabla_{X}h)(Y)-(\nabla_{Y}h)(X))+$$
$$+F(X,\cdot,Y)-F(X,Y,\cdot)-F(Y,\cdot,X)+F(Y,X,\cdot)$$
and by the statistical hypothesis, we have the statement.
\end{proof}

Finally:
\begin{lemma} If $(M,h,\nabla)$ is a statistical manifold, then:
$$(\nabla_{X}h)(J^2Y)=(\nabla_{JY}h)(JX)+F(X,\cdot,JY)-F(X,JY,\cdot)-F(JY,\cdot,X)+F(JY,X,\cdot),$$
for any $X, Y \in C^{\infty}(TM)$.
\end{lemma}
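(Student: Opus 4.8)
The plan is to reuse the intermediate identity established inside the proof of the previous lemma, but with $Y$ replaced by $JY$. Recall that, by combining the first lemma in the form appropriate for each slot, that proof produced the general relation
$$(\nabla_{X}h)(JY)-(\nabla_{Y}h)(JX)=J^*((\nabla_{X}h)(Y)-(\nabla_{Y}h)(X))+F(X,\cdot,Y)-F(X,Y,\cdot)-F(Y,\cdot,X)+F(Y,X,\cdot),$$
which holds for any torsion-free affine connection, prior to imposing the statistical condition. This is the object I would manipulate.

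First I would substitute $JY$ for $Y$ throughout this identity. On the left-hand side $(\nabla_{X}h)(J(JY))=(\nabla_{X}h)(J^2Y)$ and $(\nabla_{Y}h)(JX)$ becomes $(\nabla_{JY}h)(JX)$; on the right-hand side each slot carrying a $Y$ acquires a $J$, so the four $F$-terms become exactly $F(X,\cdot,JY)-F(X,JY,\cdot)-F(JY,\cdot,X)+F(JY,X,\cdot)$, which is the combination demanded in the target formula. The only additional term produced by the substitution is $J^*((\nabla_{X}h)(JY)-(\nabla_{JY}h)(X))$.

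The key observation is that this additional term vanishes. The statistical (Codazzi) hypothesis asserts $(\nabla_{A}h)(B)=(\nabla_{B}h)(A)$ for \emph{all} vector fields $A,B$; applying it to the pair $A=X$, $B=JY$ gives $(\nabla_{X}h)(JY)=(\nabla_{JY}h)(X)$, so the argument of $J^*$ is the zero covector and hence $J^*$ of it is zero. With this term removed, the substituted identity reads
$$(\nabla_{X}h)(J^2Y)-(\nabla_{JY}h)(JX)=F(X,\cdot,JY)-F(X,JY,\cdot)-F(JY,\cdot,X)+F(JY,X,\cdot),$$
and transposing $(\nabla_{JY}h)(JX)$ to the right-hand side yields the asserted formula.

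There is essentially no analytic obstacle here: the argument is a single substitution followed by one application of the Codazzi symmetry. The only point requiring care is to invoke the statistical hypothesis for the twisted pair $(X,JY)$ rather than for $(X,Y)$ — that is, to recognize that the Codazzi identity holds for every pair of vector fields, so in particular for the pair whose second entry is $JY$. Once that is noted, the proof is immediate.
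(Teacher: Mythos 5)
Your proof is correct and takes essentially the same route the paper intends: the paper's proof is only the remark that the statement ``follows immediately from the previous lemmas,'' and your argument---substituting $JY$ for $Y$ in the intermediate identity from the second lemma's proof and then annihilating the $J^*$ term via the Codazzi condition applied to the pair $(X,JY)$---is exactly the natural way to make that explicit.
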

\begin{proof} It follow immediately from previous lemmas.
\end{proof}

\begin{remark} Let  $J:TM \rightarrow TM$ be a $h$-symmetric $(1,1)$-tensor field on $M$. Then $\nabla J$ is $h$-symmetric, that is, $h((\nabla_XJ)Y,Z)=h((\nabla_XJ)Z,Y)$, for any $X,Y,Z \in C^{\infty}(TM)$, if and only if $F$ satisfies:
$$F(X,Y,Z)=F(X,Z,Y),$$
for any $X,Y,Z \in C^{\infty}(TM)$.
\end{remark}

\begin{remark} Let $M$ be a smooth manifold with an affine connection $\nabla$ and let $h$ be a symmetric $\nabla$-parallel $(0,2)$-tensor field on $M$. Then for any $h$-symmetric $(1,1)$-tensor field $J$ on $M$, we have that $\nabla J$ is $h$-symmetric.
\end{remark}

Moreover, by a direct computation, we easily get:

\begin{proposition} If $(M,h,\nabla)$ is a statistical manifold and $J:TM \rightarrow TM$ is a $h$-symmetric $(1,1)$-tensor field on $M$ such that $\nabla J$ is $h$-symmetric, then ${\nabla}^* J$ is $h$-symmetric and furthermore, ${\nabla}^{(\alpha)} J$ is $h$-symmetric.

\end{proposition}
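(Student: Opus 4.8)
The plan is to reduce everything to the tensor $F$ and then let the first Lemma of this subsection do the work. First I would write $\nabla^{*}J$ explicitly. Using the dual-connection formula $\nabla^{*}_X Y = \nabla_X Y + h^{-1}((\nabla_X h)(Y))$ from the Lemma of Subsection 2.2, a direct expansion of $(\nabla^{*}_X J)Y = \nabla^{*}_X(JY) - J(\nabla^{*}_X Y)$ gives
$$(\nabla^{*}_X J)Y = (\nabla_X J)Y + h^{-1}((\nabla_X h)(JY)) - J(h^{-1}((\nabla_X h)(Y))),$$
for any $X,Y\in C^{\infty}(TM)$.

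Next I would pair this expression with a vector field $Z$ through $h$. The first term contributes $F(X,Y,Z)$ by the definition of $F$; the second contributes $(\nabla_X h)(JY,Z)$, since $h(h^{-1}(\xi),Z)=\xi(Z)$ for any $1$-form $\xi$ ($h$ being symmetric); and for the third, the $h$-symmetry of $J$ turns $h(J(h^{-1}((\nabla_X h)(Y))),Z)$ into $(\nabla_X h)(Y,JZ)$. This yields
$$h((\nabla^{*}_X J)Y,Z) = F(X,Y,Z) + (\nabla_X h)(JY,Z) - (\nabla_X h)(Y,JZ).$$
The key step is to recognise the last two terms: evaluating the first Lemma of this subsection at $Z$ gives $(\nabla_X h)(JY,Z) - (\nabla_X h)(Y,JZ) = -F(X,Y,Z) + F(X,Z,Y)$, so the $F(X,Y,Z)$ terms cancel and we arrive at the clean identity $h((\nabla^{*}_X J)Y,Z) = F(X,Z,Y)$. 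Swapping $Y$ and $Z$ gives $h((\nabla^{*}_X J)Z,Y) = F(X,Y,Z)$, and because $\nabla J$ is $h$-symmetric the Remark above tells us that $F$ is symmetric in its last two arguments, i.e. $F(X,Z,Y) = F(X,Y,Z)$. Hence $h((\nabla^{*}_X J)Y,Z) = h((\nabla^{*}_X J)Z,Y)$, which is exactly the $h$-symmetry of $\nabla^{*}J$.

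Finally, for the $\alpha$-connection I would observe that $J$-differentiation is affine-linear in the connection: from $\nabla^{(\alpha)} = \frac{1+\alpha}{2}\nabla + \frac{1-\alpha}{2}\nabla^{*}$ one checks at once that $(\nabla^{(\alpha)}_X J)Y = \frac{1+\alpha}{2}(\nabla_X J)Y + \frac{1-\alpha}{2}(\nabla^{*}_X J)Y$, the inhomogeneous $J$-terms cancelling between $\nabla^{(\alpha)}_X(JY)$ and $J(\nabla^{(\alpha)}_X Y)$. Pairing with $Z$ and using that both $\nabla J$ and $\nabla^{*}J$ are $h$-symmetric makes the right-hand side symmetric in $Y$ and $Z$, so $\nabla^{(\alpha)}J$ is $h$-symmetric for every $\alpha\in\mathbb{R}$. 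The only genuine obstacle is the bookkeeping in the middle step — keeping the $h^{-1}$ contractions straight and applying the $h$-symmetry of $J$ in the correct slot — after which the identity $h((\nabla^{*}_X J)Y,Z)=F(X,Z,Y)$ makes both conclusions immediate.
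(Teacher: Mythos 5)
Your proof is correct and follows essentially the same route as the paper: the identity you derive, $h((\nabla^{*}_X J)Y,Z)=F(X,Z,Y)$, is exactly the formula $(\nabla^{*}_X J)Y=(\nabla_X J)Y+h^{-1}\{F(X,\cdot,Y)-F(X,Y,\cdot)\}$ that the paper uses (in the proof of the subsequent proposition) for the ``direct computation'' it invokes here, and the conclusion then follows from the Remark identifying $h$-symmetry of $\nabla J$ with symmetry of $F$ in its last two slots, together with the affine combination $\nabla^{(\alpha)}J=\frac{1+\alpha}{2}\nabla J+\frac{1-\alpha}{2}\nabla^{*}J$.
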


Moreover:
\begin{proposition} If $(M,h,\nabla)$ is a statistical manifold and $J:TM \rightarrow TM$ is a $h$-symmetric $(1,1)$-tensor field on $M$, then $\nabla J={\nabla}^* J$ if and only if $\nabla J$ is $h$-symmetric.

\end{proposition}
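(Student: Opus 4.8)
The plan is to compute the difference $(\nabla^*_X J)Y - (\nabla_X J)Y$ explicitly and recognize it, via the first Lemma of this subsection together with the Remark characterizing the $h$-symmetry of $\nabla J$, as an obstruction that vanishes exactly when $\nabla J$ is $h$-symmetric.

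First I would expand the dual covariant derivative of $J$. Using the formula $\nabla^*_X W = \nabla_X W + h^{-1}((\nabla_X h)(W))$ from Lemma 2.3 (valid since $h$ is symmetric) and writing $(\nabla^*_X J)Y = \nabla^*_X(JY) - J(\nabla^*_X Y)$, the pure connection terms reassemble into $(\nabla_X J)Y = \nabla_X(JY) - J\nabla_X Y$, and the remaining $h^{-1}(\nabla h)$ contributions leave
$$(\nabla^*_X J)Y - (\nabla_X J)Y = h^{-1}((\nabla_X h)(JY)) - J\bigl(h^{-1}((\nabla_X h)(Y))\bigr),$$
for any $X, Y \in C^{\infty}(TM)$.

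Second, I would use the $h$-symmetry of $J$ to convert the composition $J\circ h^{-1}$ into $h^{-1}\circ J^*$. Indeed, for any covector $\xi$ and any $T$ we have $h(Jh^{-1}(\xi), T) = h(h^{-1}(\xi), JT) = \xi(JT) = (J^*\xi)(T)$, so $J(h^{-1}(\xi)) = h^{-1}(J^*\xi)$. Applying this with $\xi = (\nabla_X h)(Y)$ pulls $h^{-1}$ out in front and gives
$$(\nabla^*_X J)Y - (\nabla_X J)Y = h^{-1}\bigl((\nabla_X h)(JY) - J^*((\nabla_X h)(Y))\bigr).$$

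Third, I would invoke the first Lemma of this subsection, which identifies the bracketed covector as $-F(X,Y,\cdot) + F(X,\cdot,Y)$. Since $h^{-1}$ is an isomorphism, $\nabla J = \nabla^* J$ holds if and only if this covector vanishes for all arguments, that is, $F(X,Y,Z) = F(X,Z,Y)$ for all $X,Y,Z \in C^{\infty}(TM)$. By the Remark preceding the proposition, this last identity is precisely the $h$-symmetry of $\nabla J$, which yields both implications at once. I do not anticipate a genuine obstacle here: the computation is short, and the only point requiring slight care is the identity $Jh^{-1} = h^{-1}J^*$, which is exactly where the $h$-symmetry of $J$ is used. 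It is worth noting that the statistical (Codazzi) hypothesis is not actually needed for this particular equivalence, only the symmetry of $h$ and of $J$.
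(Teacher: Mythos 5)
Your proof is correct and follows essentially the same route as the paper's: the paper's one-line proof simply states the identity $({\nabla}^*_X J)Y=({\nabla}_X J)Y+h^{-1}\{F(X,\cdot,Y)-F(X,Y,\cdot)\}$, which is exactly what you derive in detail via the dual-connection formula, the identity $J\circ h^{-1}=h^{-1}\circ J^*$, and the first Lemma of the subsection, before concluding through the Remark. Your side observation that only the symmetry of $h$ and the $h$-symmetry of $J$ (not the Codazzi condition) is actually used is also accurate.
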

\begin{proof} We have:
$$({\nabla}_X^* J)Y=({\nabla}_X J)Y+h^{-1}\{F(X,\cdot,Y)-F(X,Y,\cdot)\},$$
for any $X, Y \in C^{\infty}(TM).$ Then the statement.
\end{proof}
\begin{corollary} If $(M,h,\nabla)$ is a statistical manifold and $J:TM \rightarrow TM$ is a $h$-symmetric $(1,1)$-tensor field on $M$, then $\nabla J=0$ if and only if ${\nabla}^* J=0$ and furthermore, if $\nabla J=0$, then ${\nabla}^{(\alpha)} J=0$, for any $\alpha\in \mathbb{R}$, in particular, $\nabla^{(0)} J=0$, where $\nabla ^{(0)}$ is the Levi-Civita connection of $h$.
\end{corollary}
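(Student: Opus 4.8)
The plan is to derive everything from the identity
$$({\nabla}_X^* J)Y=({\nabla}_X J)Y+h^{-1}\{F(X,\cdot,Y)-F(X,Y,\cdot)\}$$
established in the preceding proposition, combined with the elementary fact that, for a fixed $(1,1)$-tensor field, the assignment $D\mapsto DJ$ is affine in the connection $D$.

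First I would settle the equivalence $\nabla J=0\iff{\nabla}^*J=0$. The implication $\nabla J=0\Rightarrow{\nabla}^*J=0$ is immediate, since $\nabla J=0$ forces $F\equiv 0$ and hence the correction term in the displayed identity vanishes. For the converse I would argue directly rather than invoke $({\nabla}^*)^*=\nabla$: assuming ${\nabla}^*J=0$, the identity gives $({\nabla}_X J)Y=-h^{-1}\{F(X,\cdot,Y)-F(X,Y,\cdot)\}$; pairing both sides with an arbitrary $Z$ through $h$ and using $h(h^{-1}(\omega),Z)=\omega(Z)$ turns this into $F(X,Y,Z)=-F(X,Z,Y)+F(X,Y,Z)$, so that $F(X,Z,Y)=0$ for all $X,Y,Z$. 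Non-degeneracy of $h$ then yields $\nabla J=0$.

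Next, for arbitrary $\alpha$ I would use that $\nabla^{(\alpha)}=\frac{1+\alpha}{2}\nabla+\frac{1-\alpha}{2}{\nabla}^*$ is an affine combination of connections, the two coefficients summing to $1$, so that $({\nabla}^{(\alpha)}_X J)Y=\frac{1+\alpha}{2}({\nabla}_X J)Y+\frac{1-\alpha}{2}({\nabla}_X^* J)Y$. Under the hypothesis $\nabla J=0$, the first part gives ${\nabla}^*J=0$ as well, so both terms on the right vanish and ${\nabla}^{(\alpha)}J=0$ for every $\alpha\in\mathbb{R}$; taking $\alpha=0$ yields in particular $\nabla^{(0)}J=0$.

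It remains to identify $\nabla^{(0)}$ with the Levi-Civita connection of $h$. Here I would recall the earlier remark that $\nabla^{(\alpha)}h=\alpha\nabla h$ and $T^{\nabla^{(\alpha)}}=\frac{1+\alpha}{2}T^{\nabla}$ for symmetric $h$; evaluating at $\alpha=0$ and using that a statistical structure is torsion-free ($T^{\nabla}=0$) gives $\nabla^{(0)}h=0$ and $T^{\nabla^{(0)}}=0$, so by uniqueness of the torsion-free metric connection, $\nabla^{(0)}$ is indeed the Levi-Civita connection of $h$. All the computations here are short; the only step demanding care is the converse implication, where one must not merely cite abstract duality but check that the antisymmetric constraint on $F$ imposed by ${\nabla}^*J=0$ actually forces $F$ itself to vanish, which is exactly what contracting with $h$ accomplishes.
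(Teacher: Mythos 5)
Your proof is correct and follows essentially the same route the paper intends: the corollary is an immediate consequence of the identity $({\nabla}_X^* J)Y=({\nabla}_X J)Y+h^{-1}\{F(X,\cdot,Y)-F(X,Y,\cdot)\}$ from the preceding proposition, together with the affine dependence of $D\mapsto DJ$ on the connection $D$. Your two elaborations --- proving the converse by contracting with $h$ (rather than appealing to $(\nabla^*)^*=\nabla$ and the fact that $(h,\nabla^*)$ is again statistical) and verifying that $\nabla^{(0)}$ is the Levi-Civita connection via $\nabla^{(0)}h=0$ and $T^{\nabla^{(0)}}=\frac{1}{2}T^{\nabla}=0$ --- correctly fill in details the paper leaves implicit.
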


\subsection{Integrability of generalized almost product structures}

\begin{proposition}\label{bb} Let $h$ be a non-degenerate symmetric $(0,2)$-tensor field and let $\nabla$ be a torsion-free affine connection on $M$. Let us suppose that  $(M,h,\nabla)$ is a statistical manifold and let $J:TM \rightarrow TM$ be a $h$-symmetric $(1,1)$-tensor field. Let $\hat{J}:=\left(
                    \begin{array}{cc}
                      J & (I-J^2)h^{-1} \\
                      h & -J^* \\
                    \end{array}
                  \right)
$ be the generalized almost product structure induced by $(h,J)$. Then
 the Nijenhuis tensor field of $\hat{J}$ with respect to $[\cdot,\cdot]_{\nabla^{(\alpha)}}$ satisfies:
$$N_{\hat{J}}^{\nabla^{(\alpha)}}(X+\eta,Y+\beta)=N_{\hat{J}}^{\nabla}(X+\eta,Y+\beta),$$ for any $X,Y\in C^{\infty}(TM)$, $\eta,\beta \in C^{\infty}(T^*M)$ and for any $\alpha\in \mathbb{R}$ if and only if the tensor field $F$ satisfies the following condition:
$$F(X,Y,Z)+F(Y,Z,X)-F(X,Z,Y)-F(Y,X,Z)=0,$$
for any $X,Y,Z\in C^{\infty}(TM)$.
\end{proposition}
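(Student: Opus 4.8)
The plan is to use that $[\cdot,\cdot]_{\nabla^{(\alpha)}}$ and $[\cdot,\cdot]_{\nabla}$ differ only by a tensor field, so that the two Nijenhuis tensors differ by a purely algebraic expression. From the formula for the $\nabla^{(\alpha)}$-bracket recalled above I would write
$$[\sigma,\tau]_{\nabla^{(\alpha)}}-[\sigma,\tau]_{\nabla}=\frac{1-\alpha}{2}\,D(\sigma,\tau),\quad D(X+\eta,Y+\beta):=-(\nabla_Xh)(h^{-1}(\beta))+(\nabla_Yh)(h^{-1}(\eta)),$$
and observe that $D$ is $C^\infty(M)$-bilinear (being the difference of two connections acting on forms) and takes values in $T^*M$. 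Since the Nijenhuis tensor is assembled from the bracket by composition with the pointwise-linear endomorphism $\hat J$, replacing $[\cdot,\cdot]_{\nabla}$ by $[\cdot,\cdot]_{\nabla}+\frac{1-\alpha}{2}D$ yields
$$N_{\hat J}^{\nabla^{(\alpha)}}(\sigma,\tau)-N_{\hat J}^{\nabla}(\sigma,\tau)=\frac{1-\alpha}{2}\,\Delta(\sigma,\tau),$$
$$\Delta(\sigma,\tau):=D(\hat J\sigma,\hat J\tau)-\hat J D(\hat J\sigma,\tau)-\hat J D(\sigma,\hat J\tau)+\hat J^2 D(\sigma,\tau).$$
As the scalar $\frac{1-\alpha}{2}$ is nonzero for every $\alpha\neq 1$ (the case $\alpha=1$ being trivial), the assertion holds for all $\alpha$ if and only if $\Delta\equiv 0$, which reduces the proposition to an algebraic identity independent of $\alpha$.

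Next I would expand $\Delta$ from the block description $\hat J(X+\eta)=\big(JX+(I-J^2)h^{-1}(\eta)\big)+\big(h(X)-J^*\eta\big)$, using $\hat J^2=I$ (so the last term of $\Delta$ simplifies to $D(\sigma,\tau)$) together with the identity $h^{-1}(J^*\mu)=J(h^{-1}(\mu))$, which follows from the $h$-symmetry of $J$. The one point requiring attention is that $D$ is $T^*M$-valued, so each composition $\hat J D(\cdot,\cdot)$ contributes a $TM$-component through the off-diagonal block $(I-J^2)h^{-1}$, while leaving a $T^*M$-component $-J^*$ of the same covector. I would therefore split $\Delta=\Delta_{TM}+\Delta_{T^*M}$ and treat the two components separately, abbreviating the recurring factor $(\nabla_Vh)(h^{-1}(\mu))$ throughout the expansion.

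Finally I would convert the expanded expressions into the tensor field $F$. The essential inputs are the first Lemma of this subsection, in its evaluated form $(\nabla_Xh)(JY,Z)-(\nabla_Xh)(Y,JZ)=F(X,Z,Y)-F(X,Y,Z)$, and the statistical (Codazzi) symmetry $(\nabla_Xh)(Y,Z)=(\nabla_Yh)(X,Z)$, which together with the symmetry of $h$ makes $\nabla h$ a fully symmetric $(0,3)$-tensor. Feeding these into $\Delta_{TM}$ and $\Delta_{T^*M}$ produces a large number of terms, most of which cancel by the full symmetry of $\nabla h$, and the surviving terms assemble into the combination $F(X,Y,Z)+F(Y,Z,X)-F(X,Z,Y)-F(Y,X,Z)$. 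Hence $\Delta\equiv 0$ becomes equivalent to the vanishing of this expression. As a consistency check one may note, via the second Lemma of this subsection, that the stated condition on $F$ is in turn equivalent to $(\nabla_Xh)(JY)=(\nabla_Yh)(JX)$, which clarifies why both the $TM$- and $T^*M$-parts collapse to the same requirement. I expect the main obstacle to be precisely this last step: organizing the two $\hat J$-compositions so that the numerous $\nabla h$- and $J$-terms cancel, and confirming that $\Delta_{TM}$ and $\Delta_{T^*M}$ vanish under one and the same condition on $F$ rather than under two a priori different ones.
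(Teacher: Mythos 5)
Your proposal is correct and takes essentially the same route as the paper's proof: the paper likewise reduces everything to an $\alpha$-independent algebraic identity (each displayed difference carries the overall factor $-\frac{1-\alpha}{2}$), expands it via the block form of $\hat{J}$ on the pairs $(X,Y)$, $(X,h(W))$, $(h(Z),h(W))$, and converts the $\nabla h$-terms into $F$ using the first lemma of the subsection together with the Codazzi symmetry, ending in exactly the combination you name. Your explicit tensor $\Delta$ and the $TM$/$T^*M$-component splitting are just a cleaner packaging of the computation the paper carries out case by case.
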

\begin{proof} Let us suppose $\eta=h(Z)$ and $\beta=h(W)$ for some $Z,W\in C^{\infty}(TM)$. By using the previous lemmas, we get:
$$N_{\hat{J}}^{\nabla^{(\alpha)}}(X,Y)-N_{\hat{J}}^{\nabla}(X,Y)=$$
$$=-\frac{1-\alpha}{2}\{(\nabla_{JX}h)(Y)-(\nabla_{JY}h)(X)+J^*((\nabla_{X}h)(Y))-J^*((\nabla_{Y}h)(X))\}=$$
$$=-\frac{1-\alpha}{2}\{(\nabla_{Y}h)(JX)-(\nabla_{X}h)(JY)+J^*((\nabla_{X}h)(Y))-J^*((\nabla_{Y}h)(X))\}=$$
$$=-\frac{1-\alpha}{2}\{-h((\nabla_{Y}J)(X), \cdot)+h(X,(\nabla_{Y}J)(\cdot))+h((\nabla_{X}J)(Y), \cdot)-h(Y,(\nabla_{X}J)(\cdot))\}=$$
$$=-\frac{1-\alpha}{2}\{-F(Y,X,\cdot)+F(Y,\cdot,X)+F(X,Y,\cdot)-F(X,\cdot,Y)\}.$$

$$N_{\hat{J}}^{\nabla^{(\alpha)}}(h(Z),h(W))-N_{\hat{J}}^{\nabla}(h(Z),h(W))=$$
$$=-\frac{1-\alpha}{2}\{(\nabla_{(J^2-I)Z}h)(JW)-(\nabla_{(J^2-I)W}h)(JZ)-J^*((\nabla_{(J^2-I)Z}h)(W))+$$
$$+J^*((\nabla_{(J^2-I)W}h)(Z))+(J^2-I)h^{-1}(-(\nabla_{(J^2-I)Z}h)(W)+(\nabla_{(J^2-I)W}h)(Z))\}=$$
$$=-\frac{1-\alpha}{2}\{(\nabla_{JW}h)(J^2Z)-(\nabla_{JZ}h)(J^2W)-J^*((\nabla_{J^2Z}h)(W))+J^*((\nabla_{J^2W}h)(Z))-$$
$$-h((\nabla_{Z}J)(W), \cdot)+h(W,(\nabla_{Z}J)(\cdot))+h((\nabla_{W}J)(Z), \cdot)-h(Z,(\nabla_{W}J)(\cdot))\}-$$
$$-\frac{1-\alpha}{2}\{(J^2-I)h^{-1}(-(\nabla_{J^2Z}h)(W)+(\nabla_{J^2W}h)(Z))\}=$$
$$=-\frac{1-\alpha}{2}\{F(JW,\cdot,JZ)-F(JW,JZ,\cdot)-F(JZ,\cdot,JW)+F(JZ,JW,\cdot)-$$
$$-F(Z,W,\cdot)+F(Z,\cdot,W)+F(W,Z,\cdot)-F(W,\cdot,Z)\}-$$
$$-\frac{1-\alpha}{2}\{(J^2-I)h^{-1}(-(\nabla_{W}h)(J^2Z)+(\nabla_{Z}h)(J^2W))\}=$$
$$=-\frac{1-\alpha}{2}\{F(JW,\cdot,JZ)-F(JW,JZ,\cdot)-F(JZ,\cdot,JW)+F(JZ,JW,\cdot)-$$
$$-F(Z,W,\cdot)+F(Z,\cdot,W)+F(W,Z,\cdot)-F(W,\cdot,Z)\}-$$
$$-\frac{1-\alpha}{2}\{(J^2-I)h^{-1}(-F(W,\cdot,JZ)+F(W,JZ,\cdot)+F(JZ,\cdot,W)-F(JZ,W,\cdot)+$$
$$+F(Z,\cdot,JW)-F(Z,JW,\cdot)-F(JW,\cdot,Y)+F(JW,Z,\cdot))\}.$$

$$N_{\hat{J}}^{\nabla^{(\alpha)}}(X,h(W))-N_{\hat{J}}^{\nabla}(X,h(W))=$$
$$=-\frac{1-\alpha}{2}\{-(\nabla_{JX}h)(JW)+(\nabla_{(J^2-I)W}h)(X)+$$$$+J^*((\nabla_{JX}h)(W))-J^*((\nabla_{X}h)(JW))+(\nabla _ X h)(W)\}-$$
$$-\frac{1-\alpha}{2}\{(J^2-I)h^{-1}((\nabla_{JX}h)(W)-(\nabla_{X}h)(JW))\}=$$
$$=-\frac{1-\alpha}{2}\{-(\nabla_{JX}h)(JW)+(\nabla_{X}h)(J^2W)+J^*((\nabla_{JX}h)(W))-J^*((\nabla_{X}h)(JW))\}-$$
$$-\frac{1-\alpha}{2}\{(J^2-I)h^{-1}((\nabla_{W}h)(JX)-(\nabla_{X}h)(JW))\}=$$
$$=-\frac{1-\alpha}{2}\{F(X,\cdot,JW)-F(X,JW,\cdot)-F(JW,\cdot,X)+F(JW,X,\cdot)+$$
$$+J^*(F(W,\cdot,X)-F(W,X,\cdot)-F(X,\cdot,W)+F(X,W,\cdot))\}-$$
$$-\frac{1-\alpha}{2}\{(J^2-I)h^{-1}((F(W,\cdot,X)-F(W,X,\cdot)-F(X,\cdot,W)+F(X,W,\cdot))\}.$$
Then the proof is complete.
\end{proof}

\begin{corollary} Let $(M,h,\nabla)$ be a statistical manifold and let $J:TM \rightarrow TM$ be a $h$-symmetric $(1,1)$-tensor field such that $\nabla J$ is $h$-symmetric. Then the Nijenhuis tensor field of the generalized almost product structure $\hat{J}$ with respect to $[\cdot,\cdot]_{\nabla^{(\alpha)}}$ satisfies:
$$N_{\hat{J}}^{\nabla^{(\alpha)}}(X+\eta,Y+\beta)=N_{\hat{J}}^{\nabla}(X+\eta,Y+\beta),$$ for any $X,Y\in C^{\infty}(TM)$, $\eta,\beta \in C^{\infty}(T^*M)$ and for any $\alpha \in \mathbb{R}$. In particular, the definition of integrability for $\hat J$ is $\alpha$-invariant.
\end{corollary}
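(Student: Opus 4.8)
The plan is to deduce this Corollary directly from Proposition~\ref{bb}, whose content is an if-and-only-if: the identity $N_{\hat J}^{\nabla^{(\alpha)}}=N_{\hat J}^{\nabla}$ holds for every $\alpha$ exactly when the tensor field $F$ satisfies the balancing condition
\[
F(X,Y,Z)+F(Y,Z,X)-F(X,Z,Y)-F(Y,X,Z)=0
\]
for all $X,Y,Z\in C^\infty(TM)$. Hence the entire task reduces to verifying that the hypothesis ``$\nabla J$ is $h$-symmetric'' forces this condition on $F$; no further computation with $\hat J$, the bracket $[\cdot,\cdot]_{\nabla^{(\alpha)}}$, or the Nijenhuis tensor is needed, since all of that work is already packaged in Proposition~\ref{bb}.

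First I would invoke the Remark characterizing $h$-symmetry of $\nabla J$: since $F(X,Y,Z)=h((\nabla_XJ)Y,Z)$, the assumption that $\nabla J$ is $h$-symmetric is equivalent to $F$ being symmetric in its last two arguments, that is, $F(X,Y,Z)=F(X,Z,Y)$ for all $X,Y,Z$. I would then substitute this symmetry into the balancing condition and pair up the four terms: the first and third cancel because $F(X,Y,Z)=F(X,Z,Y)$, while the second and fourth cancel because $F(Y,Z,X)=F(Y,X,Z)$ (again swapping the last two slots). Thus the left-hand side vanishes identically, and the condition of Proposition~\ref{bb} is satisfied.

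Applying Proposition~\ref{bb} now yields $N_{\hat J}^{\nabla^{(\alpha)}}(X+\eta,Y+\beta)=N_{\hat J}^{\nabla}(X+\eta,Y+\beta)$ for all $X,Y\in C^\infty(TM)$, $\eta,\beta\in C^\infty(T^*M)$ and every $\alpha\in\mathbb{R}$, which is the first assertion. For the final clause I would simply observe that $\hat J$ is integrable with respect to $[\cdot,\cdot]_{\nabla^{(\alpha)}}$ precisely when $N_{\hat J}^{\nabla^{(\alpha)}}$ vanishes; since this tensor equals $N_{\hat J}^{\nabla}$, which carries no dependence on $\alpha$, the vanishing condition is the same for every $\alpha$, i.e. integrability of $\hat J$ is $\alpha$-invariant.

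There is no genuine obstacle here: the proof is pure algebraic bookkeeping resting entirely on the two earlier results, and the only point demanding care is to match the last-two-slot symmetry of $F$ against the precise sign pattern of the four terms in Proposition~\ref{bb}, ensuring that they cancel in the pairs $(1,3)$ and $(2,4)$ rather than being mis-paired.
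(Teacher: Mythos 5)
Your proof is correct and follows exactly the route the paper intends: the corollary is an immediate consequence of Proposition~\ref{bb} combined with the Remark that $h$-symmetry of $\nabla J$ is equivalent to $F(X,Y,Z)=F(X,Z,Y)$, which makes the four-term condition vanish in the pairs $(1,3)$ and $(2,4)$ precisely as you describe. The $\alpha$-invariance of integrability then follows since $N_{\hat J}^{\nabla^{(\alpha)}}=N_{\hat J}^{\nabla}$ is independent of $\alpha$, matching the paper's (unwritten) argument.
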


\subsection{Integrability of generalized almost complex structures}

Analogously we have the following.

\begin{proposition}\label{dd} Let $h$ be a non-degenerate symmetric $(0,2)$-tensor field and let $\nabla$ be a torsion-free affine connection on $M$. Let us suppose that  $(M,h,\nabla)$ is a statistical manifold and let $J:TM \rightarrow TM$ be a $h$-symmetric $(1,1)$-tensor field. Let $\hat{J}_-:=\left(
                    \begin{array}{cc}
                      J & -(I+J^2)h^{-1} \\
                      h & -J^* \\
                    \end{array}
                  \right)
$ be the generalized almost complex structure induced by $(h,J)$. Then
 the Nijenhuis tensor field of $\hat{J}_-$ with respect to $[\cdot,\cdot]_{\nabla^{(\alpha)}}$ satisfies:
$$N_{\hat{J}_-}^{\nabla^{(\alpha)}}(X+\eta,Y+\beta)=N_{\hat{J}_-}^{\nabla}(X+\eta,Y+\beta),$$ for any $X,Y\in C^{\infty}(TM)$, $\eta,\beta \in C^{\infty}(T^*M)$ and for any $\alpha\in \mathbb{R}$ if and only if the tensor field $F$ satisfies the following condition:
$$F(X,Y,Z)+F(Y,Z,X)-F(X,Z,Y)-F(Y,X,Z)=0,$$
for any $X,Y,Z\in C^{\infty}(TM)$.
\end{proposition}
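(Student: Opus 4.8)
The plan is to follow the proof of Proposition \ref{bb} almost verbatim, exploiting that the generalized almost complex structure $\hat{J}_-$ differs from the generalized almost product structure $\hat{J}$ of Proposition \ref{bb} only in its upper-right block, where $(I-J^2)h^{-1}$ is replaced by $-(I+J^2)h^{-1}$, and that accordingly $\hat{J}_-^2=-I$ instead of $\hat{J}^2=I$. In particular $\hat{J}_-$ and $\hat{J}$ coincide on pure vector fields, since the two differ only in the way they act on the cotangent part.

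First I would recall that the Nijenhuis tensor with respect to $[\cdot,\cdot]_{\nabla^{(\alpha)}}$ is
$$N_{\hat{J}_-}^{\nabla^{(\alpha)}}(\sigma,\tau)=[\hat{J}_-\sigma,\hat{J}_-\tau]_{\nabla^{(\alpha)}}-\hat{J}_-[\hat{J}_-\sigma,\tau]_{\nabla^{(\alpha)}}-\hat{J}_-[\sigma,\hat{J}_-\tau]_{\nabla^{(\alpha)}}+\hat{J}_-^2[\sigma,\tau]_{\nabla^{(\alpha)}}.$$
Because $\hat{J}_-$ is $C^{\infty}(M)$-linear and is the same operator in $N^{\nabla^{(\alpha)}}$ and in $N^{\nabla}$, forming the difference cancels the Lie-bracket parts and leaves only the \emph{tensorial} bracket difference computed in Subsection 4.1,
$$[X+\eta,Y+\beta]_{\nabla^{(\alpha)}}-[X+\eta,Y+\beta]_{\nabla}=-\frac{1-\alpha}{2}\{(\nabla_Xh)(h^{-1}(\beta))-(\nabla_Yh)(h^{-1}(\eta))\}.$$
Since each of the four terms of $N_{\hat{J}_-}^{\nabla^{(\alpha)}}-N_{\hat{J}_-}^{\nabla}$ carries exactly one such difference, the whole expression is proportional to $\frac{1-\alpha}{2}$; hence its vanishing for all $\alpha$ is equivalent to the vanishing of the $\alpha$-free factor, and it suffices to argue at a single $\alpha\neq 1$.

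Next I would split the verification into the three cases used in Proposition \ref{bb}: both arguments vector fields, both arguments of the form $h(Z),h(W)$, and the mixed case, substituting in each the explicit action of $\hat{J}_-$ and recording separately the tangent and cotangent components of the output. In the first case $\hat{J}_-$ acts as $\hat{J}$ on the pure vectors $X,Y$, so the cotangent component of $N_{\hat{J}_-}^{\nabla^{(\alpha)}}-N_{\hat{J}_-}^{\nabla}$ reproduces verbatim the expression of Proposition \ref{bb}, while the tangent component is now $(I+J^2)h^{-1}$ applied to $(\nabla_Yh)(X)-(\nabla_Xh)(Y)$, which vanishes by the statistical hypothesis $(\nabla_Xh)(Y)=(\nabla_Yh)(X)$; the term $\hat{J}_-^2[\sigma,\tau]$ contributes nothing here because the bracket difference vanishes on pure vectors. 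Converting the surviving cotangent part into $F$-terms with the preceding Lemmas yields
$$-\frac{1-\alpha}{2}\{F(X,Y,\cdot)-F(X,\cdot,Y)-F(Y,X,\cdot)+F(Y,\cdot,X)\},$$
whose vanishing for all $\alpha$ is exactly $F(X,Y,Z)+F(Y,Z,X)-F(X,Z,Y)-F(Y,X,Z)=0$; this proves necessity.

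For sufficiency I would expand the remaining two cases in the same manner. There the altered operator factors $-(I+J^2)h^{-1}$ and $\hat{J}_-^2=-I$ replace the $(J^2-I)h^{-1}$ and $\hat{J}^2=I$ of Proposition \ref{bb}, but they multiply exactly the same $F$-valued quantities; both the untwisted combinations and the $J$-twisted ones, such as $F(JW,\cdot,JZ)-F(JW,JZ,\cdot)-F(JZ,\cdot,JW)+F(JZ,JW,\cdot)$, are forced to vanish by the stated $F$-identity, the twisted ones after substituting $JZ,JW$ for the free variables in it. Hence the single condition that annihilates the first case annihilates the other two as well, and the equivalence follows. I expect the only real difficulty to be organizational: keeping the tangent and cotangent components straight through the three cases and checking that the modified factors $-(I+J^2)$ and $-I$ always multiply $F$-combinations that the $F$-identity already kills, so that no constraint beyond that single identity is produced.
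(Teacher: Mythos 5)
Your proposal is correct and follows essentially the same route as the paper's own proof: the same three-case decomposition ($X,Y$; $h(Z),h(W)$; mixed), the same reduction of the difference $N^{\nabla^{(\alpha)}}-N^{\nabla}$ to the tensorial bracket difference, the same use of the preceding lemmas to convert to $F$-terms, and the same observation that the form--form and mixed cases only produce $J$-substituted instances of the single identity obtained from the vector--vector case. Your explicit remark that everything carries a factor $\frac{1-\alpha}{2}$, so one may argue at a single $\alpha\neq 1$, is a minor streamlining of what the paper does implicitly.
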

\begin{proof} As before, let us suppose $\eta=h(Z)$ and $\beta=h(W)$ for some $Z,W\in C^{\infty}(TM)$. We get:
$$N_{\hat{J}_-}^{\nabla^{(\alpha)}}(X,Y)-N_{\hat{J}}^{\nabla}(X,Y)=$$
$$=-\frac{1-\alpha}{2}\{(\nabla_{JX}h)(Y)-(\nabla_{JY}h)(X)+J^*((\nabla_{X}h)(Y))-J^*((\nabla_{Y}h)(X))\}=$$
$$=-\frac{1-\alpha}{2}\{(\nabla_{Y}h)(JX)-(\nabla_{X}h)(JY)+J^*((\nabla_{X}h)(Y))-J^*((\nabla_{Y}h)(X))\}=$$
$$=-\frac{1-\alpha}{2}\{-h((\nabla_{Y}J)(X), \cdot)+h(X,(\nabla_{Y}J)(\cdot))+h((\nabla_{X}J)(Y), \cdot)-h(Y,(\nabla_{X}J)(\cdot))\}=$$
$$=-\frac{1-\alpha}{2}\{-F(Y,X,\cdot)+F(Y,\cdot,X)+F(X,Y,\cdot)-F(X,\cdot,Y)\}.$$

$$N_{\hat{J}_-}^{\nabla^{(\alpha)}}(h(Z),h(W))-N_{\hat{J}}^{\nabla}(h(Z),h(W))=$$
$$=-\frac{1-\alpha}{2}\{(\nabla_{(J^2+I)Z}h)(JW)-(\nabla_{(J^2+I)W}h)(JZ)-$$$$-J^*((\nabla_{(J^2+I)Z}h)(W))+J^*((\nabla_{(J^2+I)W}h)(Z))\}+$$
$$+(J^2+I)h^{-1}(-(\nabla_{(J^2+I)Z}h)(W)+(\nabla_{(J^2+I)W}h)(Z))\}=$$
$$=-\frac{1-\alpha}{2}\{-h((\nabla_{Z}J)(W), \cdot)+h(W,(\nabla_{Z}J)(\cdot))+h((\nabla_{W}J)(Z), \cdot)-h(Z,(\nabla_{W}J)(\cdot))\}+$$
$$+(J^2+I)h^{-1}(-(\nabla_{J^2 Z}h)(W)+(\nabla_{J^2W}h)(Z))\}=$$
$$=-\frac{1-\alpha}{2}\{-h((\nabla_{J^2 Z}J)(W), \cdot)+h(W,(\nabla_{J^2Z}J)(\cdot))+h((\nabla_{J^2W}J)(Z), \cdot)-h(Z,(\nabla_{J^2 W}J)(\cdot))\}=$$
$$=-\frac{1-\alpha}{2}\{F(JW,\cdot,JZ)-F(JW,JZ,\cdot)-F(JZ,\cdot,JW)+F(JZ,JW,\cdot)-$$
$$-F(Z,W,\cdot)+F(Z,\cdot,W)+F(W,Z,\cdot)-F(W,\cdot,Z)\}-$$
$$-\frac{1-\alpha}{2}\{(J^2-I)h^{-1}(-F(W,\cdot,JZ)+F(W,JZ,\cdot)+F(JZ,\cdot,W)-F(JZ,W,\cdot)+$$
$$+F(Z,\cdot,JW)-F(Z,JW,\cdot)-F(JW,\cdot,Y)+F(JW,Z,\cdot))\}.$$

$$N_{\hat{J}_-}^{\nabla^{(\alpha)}}(X,h(W))-N_{\hat{J}}^{\nabla}(X,h(W))=$$
$$=-\frac{1-\alpha}{2}\{-(\nabla_{JX}h)(JW)+(\nabla_{(J^2+I)W}h)(X)+$$$$+J^*((\nabla_{JX}h)(W))-J^*((\nabla_{X}h)(JW))-(\nabla _ X h)(W)\}-$$
$$-\frac{1-\alpha}{2}\{(J^2+I)h^{-1}((\nabla_{JX}h)(W)-(\nabla_{X}h)(JW))\}=$$
$$=-\frac{1-\alpha}{2}\{-(\nabla_{JX}h)(JW)+(\nabla_{X}h)(J^2W)+J^*((\nabla_{JX}h)(W))-J^*((\nabla_{X}h)(JW))\}-$$
$$-\frac{1-\alpha}{2}\{(J^2+I)h^{-1}((\nabla_{W}h)(JX)-(\nabla_{X}h)(JW))\}=$$
$$=-\frac{1-\alpha}{2}\{F(X,\cdot,JW)-F(X,JW,\cdot)-F(JW,\cdot,X)+F(JW,X,\cdot)+$$
$$+J^*(F(W,\cdot,X)-F(W,X,\cdot)-F(X,\cdot,W)+F(X,W,\cdot))\}-$$
$$-\frac{1-\alpha}{2}\{(J^2+I)h^{-1}((F(W,\cdot,X)-F(W,X,\cdot)-F(X,\cdot,W)+F(X,W,\cdot))\}.$$
Then the proof is complete.
\end{proof}

\begin{corollary}  Let $(M,h,\nabla)$ be a statistical manifold and let $J:TM \rightarrow TM$ be a $h$-symmetric $(1,1)$-tensor field such that $\nabla J$ is $h$-symmetric. Then the Nijenhuis tensor field of the generalized almost complex structure $\hat{J}_-$ with respect to $[\cdot,\cdot]_{\nabla^{(\alpha)}}$ satisfies:
$$N_{\hat{J}_-}^{\nabla^{(\alpha)}}(X+\eta,Y+\beta)=N_{\hat{J}_-}^{\nabla}(X+\eta,Y+\beta),$$ for any $X,Y\in C^{\infty}(TM)$, $\eta,\beta \in C^{\infty}(T^*M)$ and for any $\alpha \in \mathbb{R}$. In particular, the definition of integrability for $\hat{J}_-$ is $\alpha$-invariant.
\end{corollary}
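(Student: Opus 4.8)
The plan is to obtain this Corollary as a direct specialization of Proposition \ref{dd}. That proposition already establishes that the equality $N_{\hat{J}_-}^{\nabla^{(\alpha)}}(X+\eta,Y+\beta)=N_{\hat{J}_-}^{\nabla}(X+\eta,Y+\beta)$ holds for every $\alpha\in\mathbb{R}$ exactly when
$$F(X,Y,Z)+F(Y,Z,X)-F(X,Z,Y)-F(Y,X,Z)=0,$$
for all $X,Y,Z\in C^{\infty}(TM)$. Thus the entire task reduces to showing that the stronger hypothesis that $\nabla J$ is $h$-symmetric forces this identity.

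First I would invoke the Remark asserting that, for an $h$-symmetric $J$, the tensor field $\nabla J$ is $h$-symmetric if and only if $F(X,Y,Z)=F(X,Z,Y)$, i.e. $F$ is symmetric in its last two arguments. Granting this symmetry, verifying the condition of Proposition \ref{dd} is a single regrouping:
$$F(X,Y,Z)+F(Y,Z,X)-F(X,Z,Y)-F(Y,X,Z)=\big(F(X,Y,Z)-F(X,Z,Y)\big)+\big(F(Y,Z,X)-F(Y,X,Z)\big),$$
and each parenthesized difference vanishes by the last-two-slot symmetry, the first immediately and the second after swapping the last two entries of $F(Y,\cdot,\cdot)$. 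Hence the hypothesis of Proposition \ref{dd} is met and the asserted equality of Nijenhuis tensors follows for all $\alpha$.

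For the final clause, I would note that $\hat{J}_-$ is integrable relative to a bracket precisely when the associated Nijenhuis tensor vanishes identically; since the preceding step identifies $N_{\hat{J}_-}^{\nabla^{(\alpha)}}$ with $N_{\hat{J}_-}^{\nabla}$ for every $\alpha$, the vanishing condition is the same across the whole family, so integrability of $\hat{J}_-$ is $\alpha$-invariant. I do not expect any genuine obstacle here: the only delicate point is choosing the correct pairing of the four $F$-terms so that the last-two-slot symmetry applies to each pair, which is precisely what upgrades that symmetry from a necessary to a sufficient condition for the $\alpha$-invariance.
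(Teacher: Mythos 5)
Your proposal is correct and matches the paper's (implicit) route exactly: the paper also derives this corollary by combining Proposition \ref{dd} with the earlier remark that $\nabla J$ being $h$-symmetric is equivalent to $F(X,Y,Z)=F(X,Z,Y)$, and your pairing of the four $F$-terms is precisely the regrouping that makes the condition of Proposition \ref{dd} vanish.
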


\subsection{Integrability of generalized metallic structures}

In the metallic case we have the following.

\begin{proposition} Let $h$ be a non-degenerate symmetric $(0,2)$-tensor field and let $\nabla$ be a torsion-free affine connection on $M$. Let us suppose that  $(M,h,\nabla)$ is a statistical manifold and let $J:TM \rightarrow TM$ be a $(1,1)$-tensor field such that $J$ and $\nabla J$ are $h$-symmetric. Then
 the Nijenhuis tensor field of the generalized metallic structure $\hat{J}:=
 \left(
                    \begin{array}{cc}
                      -J+pI & (-J^2+pJ+qI)h^{-1} \\
                      h & J^* \\
                    \end{array}
                  \right)
 $, $p,q\in \mathbb{R}$, with respect to $[\cdot,\cdot]_{\nabla^{(\alpha)}}$ satisfies:
$$N_{\hat{J}}^{\nabla^{(\alpha)}}(X+\eta,Y+\beta)=N_{\hat{J}}^{\nabla}(X+\eta,Y+\beta),$$ for any $X,Y\in C^{\infty}(TM)$, $\eta,\beta \in C^{\infty}(T^*M)$ and for any $\alpha\in \mathbb{R}$.  In particular, the definition of integrability for $\hat J$ is $\alpha$-invariant.
\end{proposition}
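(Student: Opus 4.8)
The plan is to avoid recomputing the whole Nijenhuis tensor and instead isolate the only quantity that can distinguish $\nabla^{(\alpha)}$ from $\nabla$, namely the bracket-difference. Writing $\Delta:=[\cdot,\cdot]_{\nabla^{(\alpha)}}-[\cdot,\cdot]_{\nabla}$, the formula for the $\nabla^{(\alpha)}$-bracket gives $\Delta(X+\eta,Y+\beta)=-\frac{1-\alpha}{2}\{(\nabla_Xh)(h^{-1}(\beta))-(\nabla_Yh)(h^{-1}(\eta))\}$, which is $C^{\infty}(M)$-bilinear and $T^*M$-valued. Since each of the four terms of the Nijenhuis tensor is linear in the bracket, the quantity I must control is
\[
\begin{aligned}
\mathcal{D}(\hat{J})(\sigma,\tau)&:=N_{\hat{J}}^{\nabla^{(\alpha)}}(\sigma,\tau)-N_{\hat{J}}^{\nabla}(\sigma,\tau)\\
&=\Delta(\hat{J}\sigma,\hat{J}\tau)-\hat{J}\,\Delta(\hat{J}\sigma,\tau)-\hat{J}\,\Delta(\sigma,\hat{J}\tau)+\hat{J}^2\,\Delta(\sigma,\tau),
\end{aligned}
\]
a purely tensorial expression, quadratic in $\hat{J}$. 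The goal is to show $\mathcal{D}(\hat{J})\equiv 0$.

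The first key step is the algebraic identity $\mathcal{D}(aI+b\hat{P})=b^2\,\mathcal{D}(\hat{P})$, valid for any constants $a,b$ and any endomorphism $\hat{P}$; it follows from the same cancellation of the $a^2$- and $ab$-terms that proves $N_{aI+b\hat{P}}=b^2N_{\hat{P}}$, and uses only that $\Delta$ is $\mathbb{R}$-bilinear. I would apply it to the spectral shift $\hat{J}=\frac{p}{2}I+\hat{K}$, where $\hat{K}:=\hat{J}-\frac{p}{2}I$ satisfies $\hat{K}^2=\frac{p^2+4q}{4}I$ by the metallic relation $\hat{J}^2=p\hat{J}+qI$. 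Thus $\mathcal{D}(\hat{J})=\mathcal{D}(\hat{K})$, and, assuming first $p^2+4q>0$ and setting $s:=\sqrt{p^2+4q}$, the rescaled structure $\hat{P}':=\frac{2}{s}\hat{K}$ satisfies $(\hat{P}')^2=I$, with $\mathcal{D}(\hat{J})=\frac{s^2}{4}\mathcal{D}(\hat{P}')$.

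The second step is to recognize $\hat{P}'$, up to a constant gauge, as the standard product structure of Proposition~\ref{bb}. A direct block computation shows $\hat{P}'=\Phi\,\hat{\tilde{J}}\,\Phi^{-1}$, where $\tilde{J}:=\frac{1}{s}(pI-2J)$ is $h$-symmetric with $\nabla\tilde{J}=-\frac{2}{s}\nabla J$ again $h$-symmetric, $\hat{\tilde{J}}:=\left(\begin{smallmatrix} \tilde{J} & (I-\tilde{J}^2)h^{-1}\\ h & -\tilde{J}^*\end{smallmatrix}\right)$ is the structure induced by $(h,\tilde{J})$, and $\Phi=\mathrm{diag}(\mu I,\nu I)$ is a constant diagonal automorphism with $\mu/\nu=s/2$; the identities $hJh^{-1}=J^*$ and $J^*h=hJ$ coming from the $h$-symmetry of $J$ make all four blocks match (one uses $I-\tilde{J}^2=\frac{4}{s^2}(-J^2+pJ+qI)$). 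Because $\mu,\nu$ are constant, $\Phi$ rescales each bracket, $[\Phi\sigma,\Phi\tau]_{\star}=\mu\,\Phi[\sigma,\tau]_{\star}$ for $\star\in\{\nabla,\nabla^{(\alpha)}\}$, hence the same holds for $\Delta$ and $\mathcal{D}(\Phi\,\hat{\tilde{J}}\,\Phi^{-1})(\sigma,\tau)=\mu\,\Phi\,\mathcal{D}(\hat{\tilde{J}})(\Phi^{-1}\sigma,\Phi^{-1}\tau)$. Since $\nabla\tilde{J}$ is $h$-symmetric, the Corollary to Proposition~\ref{bb} gives $\mathcal{D}(\hat{\tilde{J}})=0$, whence $\mathcal{D}(\hat{P}')=0$ and $\mathcal{D}(\hat{J})=0$ on the region $p^2+4q>0$.

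Finally, I would remove the restriction on $p,q$ by a polynomiality argument: for fixed $\sigma,\tau$ the entries of $\hat{J}$ are affine in $(p,q)$ and $\mathcal{D}$ is quadratic in $\hat{J}$, so $\mathcal{D}(\hat{J})(\sigma,\tau)$ is a polynomial of degree at most two in $(p,q)$; vanishing on the nonempty open set $\{p^2+4q>0\}$ forces it to vanish for all $p,q\in\mathbb{R}$ (the case $p^2+4q<0$ would alternatively reduce, via $\frac{2}{\sqrt{-(p^2+4q)}}\hat{K}$ with square $-I$, to the Corollary to Proposition~\ref{dd}). This yields $N_{\hat{J}}^{\nabla^{(\alpha)}}=N_{\hat{J}}^{\nabla}$ and hence the $\alpha$-invariance of integrability. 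I expect the main obstacle to be the explicit verification that $\hat{P}'=\Phi\,\hat{\tilde{J}}\,\Phi^{-1}$ and that $\Phi$ rescales the brackets; once these linear-algebraic identities are in place the rest is formal. Should the conjugation prove awkward, the fallback is the direct three-case computation of Propositions~\ref{bb} and~\ref{dd}: split on $\eta=h(Z)$, $\beta=h(W)$, convert every $(\nabla_{\bullet}h)(\bullet)$ into $F$ by the preceding Lemmas, and use $F(X,Y,Z)=F(X,Z,Y)$ (equivalent to $\nabla J$ being $h$-symmetric) to collapse each $F$-combination to zero, at the cost of heavy bookkeeping, worst in the $(h(Z),h(W))$ slot because of the $(-J^2+pJ+qI)h^{-1}$ block.
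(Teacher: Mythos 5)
Your proposal is correct, but it takes a genuinely different route from the paper. The paper proves this proposition by exactly the direct computation you relegate to a fallback: it splits into the slots $(X,Y)$, $(h(Z),h(W))$, $(X,h(W))$, uses $\hat{J}^2=p\hat{J}+qI$, converts every $(\nabla_\bullet h)(\bullet)$ into the tensor $F$ via the earlier lemmas, and cancels each $F$-combination using the $h$-symmetry of $\nabla J$, i.e. $F(X,Y,Z)=F(X,Z,Y)$. You instead work with the bracket-difference $\Delta:=[\cdot,\cdot]_{\nabla^{(\alpha)}}-[\cdot,\cdot]_{\nabla}$ and the tensorial quantity $\mathcal{D}(\hat{J})(\sigma,\tau)=\Delta(\hat{J}\sigma,\hat{J}\tau)-\hat{J}\Delta(\hat{J}\sigma,\tau)-\hat{J}\Delta(\sigma,\hat{J}\tau)+\hat{J}^2\Delta(\sigma,\tau)$, and reduce the metallic case to the product case already settled in the Corollary to Proposition \ref{bb}. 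I checked the identities your reduction rests on and they all hold: the shift-and-rescale law $\mathcal{D}(aI+b\hat{P})=b^2\mathcal{D}(\hat{P})$ (valid because $a,b$ are constants and $\Delta$ is bilinear); $\hat{K}^2=\frac{p^2+4q}{4}I$ for $\hat{K}=\hat{J}-\frac{p}{2}I$ (this needs $hJ=J^*h$, i.e. the $h$-symmetry of $J$); the block identity $I-\tilde{J}^2=\frac{4}{s^2}(-J^2+pJ+qI)$ for $\tilde{J}=\frac{1}{s}(pI-2J)$, which makes the conjugation $\frac{2}{s}\hat{K}=\Phi\,\hat{\tilde{J}}\,\Phi^{-1}$ work with $\Phi=\mathrm{diag}(\mu I,\nu I)$, $\mu/\nu=s/2$; and the equivariance $[\Phi\sigma,\Phi\tau]_\star=\mu\,\Phi[\sigma,\tau]_\star$, which holds precisely because $\mu,\nu$ are constants. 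Moreover $\tilde{J}$ and $\nabla\tilde{J}=-\frac{2}{s}\nabla J$ inherit $h$-symmetry from $J$ and $\nabla J$, so the Corollary to Proposition \ref{bb} applies, and your polynomial continuation in $(p,q)$ (degree at most two, vanishing on the nonempty open set $p^2+4q>0$) legitimately covers $p^2+4q\le 0$, with the complex case of Proposition \ref{dd} available as an alternative when $p^2+4q<0$. What your argument buys is economy and a conceptual point the paper leaves implicit: up to an affine reparametrization and a constant gauge, the generalized metallic structure \emph{is} the generalized almost product (or almost complex) structure, so its $\alpha$-invariance is not a new phenomenon but a corollary of the earlier ones; the cost is that it uses those corollaries as black boxes, whereas the paper's computation is self-contained and exhibits the explicit cancellations of $F$-terms.
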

\begin{proof} Remember that:
$$N_{\hat{J}}^{\nabla^{(\alpha)}}(X+\eta,Y+\beta)=[\hat{J}(X+\eta),\hat{J}(Y+\beta)]_{\nabla^{(\alpha)}}-$$
$$-\hat{J}[\hat{J}(X+\eta),Y+\beta]_{\nabla^{(\alpha)}}-\hat{J}[(X+\eta,\hat{J}(Y+\beta)]_{\nabla^{(\alpha)}}+{\hat{J}}^2([X+\eta,Y+\beta]_{\nabla^{(\alpha)}})$$
and that $${\hat{J}}^2=p\hat J +qI.$$
As before, let us suppose $\eta=h(Z)$ and $\beta=h(W)$ for some $Z,W\in C^{\infty}(TM)$. We get:
$$N_{\hat{J}}^{\nabla^{(\alpha)}}(X,Y)-N_{\hat{J}}^{\nabla}(X,Y)=$$
$$=-\frac{1-\alpha}{2}\{(\nabla_{-JX+pX}h)(Y)-(\nabla_{-JY+pY}h)(X)-\hat J(((\nabla_{X}h)(Y))-((\nabla_{Y}h)(X)))\}=$$
$$=-\frac{1-\alpha}{2}\{-(\nabla_{Y}h)(JX)+(\nabla_{X}h)(JY)\}=$$
$$=\frac{1-\alpha}{2}\{-h((\nabla_{Y}J)(X), \cdot)+h(X,(\nabla_{Y}J)(\cdot))+h((\nabla_{X}J)(Y), \cdot)-h(Y,(\nabla_{X}J)(\cdot))\}=0.$$

$$N_{\hat{J}}^{\nabla^{(\alpha)}}(h(Z),h(W))-N_{\hat{J}}^{\nabla}(h(Z),h(W))=$$
$$=-\frac{1-\alpha}{2}\{(\nabla_{(-J^2+pJ+qI)Z}h)(JW)-(\nabla_{(-J^2+pJ+qI)W}h)(JZ)-$$
$$-\hat J(((\nabla_{(-J^2+pJ+qI)Z}h)(W))-((\nabla_{(-J^2+pJ+qI)(W)}h)(Z)))\}=$$
$$=-\frac{1-\alpha}{2}\{(\nabla_{-J^2Z}h)(JW)-(\nabla_{-J^2W}h)(JZ)+$$
$$-\hat J(((\nabla_{-J^2Z}h)(W))+p(\nabla_{JZ}h)W-((\nabla_{-J^2W}h)(Z))-p(\nabla_{JW}h)(Z)\}=$$
$$=-\frac{1-\alpha}{2}\{-F(JW,\cdot,JZ)+F(JW,JZ,\cdot)+F(JZ,\cdot,JW)-F(JZ,JW,\cdot)-$$
$$-\hat J(-F(W,\cdot,JZ)+F(W,JZ,\cdot)+F(JZ,\cdot,W)-F(JZ,W,\cdot)+$$
$$+F(Z,\cdot,JW)-F(Z,JW,\cdot)-F(JW,\cdot,Z)-F(JW,Z,\cdot)+$$
$$+F(W,\cdot,Z)-F(W,Z,\cdot)-F(Z,\cdot,W)-F(Z,W,\cdot))\}=0.$$

$$N_{\hat{J}}^{\nabla^{(\alpha)}}(X,h(W))-N_{\hat{J}}^{\nabla}(X,h(W))=$$
$$=-\frac{1-\alpha}{2}\{-(\nabla_{JX}h)(JW)+p(\nabla_{X}h)(JW)-(\nabla_{(-J^2+pJ+qI)W} h)(X)-$$
$$-\hat J((\nabla_{-JX}h)(W)+p((\nabla_{X}h)(W))+(\nabla _ X h)(JW))+p\hat J (\nabla _ X h)(W)+q(\nabla _ X h)(W)\}=$$
$$=-\frac{1-\alpha}{2}\{F(X,\cdot,JW)-F(X,JW,\cdot)-F(JW,\cdot,X)+F(JW,X,\cdot)+$$
$$+\hat J(-F(X,\cdot,W)+F(X,W,\cdot)+F(W,\cdot,X)-F(W,X,\cdot)\}=0.$$
Then the proof is complete.
\end{proof}

\begin{corollary} Let $(M,h,\nabla)$ be a statistical manifold and let $J:TM \rightarrow TM$ be a $(1,1)$-tensor field such that $\nabla J=0$. Then the Nijenhuis tensor field of the generalized metallic structure $\hat{J}$ with respect to $[\cdot,\cdot]_{\nabla^{(\alpha)}}$ satisfies:
$$N_{\hat{J}}^{\nabla^{(\alpha)}}(X+\eta,Y+\beta)=N_{\hat{J}}^{\nabla}(X+\eta,Y+\beta),$$ for any $X,Y\in C^{\infty}(TM)$, $\eta,\beta \in C^{\infty}(T^*M)$ and for any $\alpha \in \mathbb{R}$.
\end{corollary}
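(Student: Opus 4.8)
The plan is to observe that the final statement is a corollary of the preceding Proposition, so the whole task reduces to showing that the hypothesis $\nabla J=0$ implies the hypotheses of that Proposition, namely that $J$ and $\nabla J$ are both $h$-symmetric. First I would remark that $\nabla J=0$ is a strictly stronger condition than ``$J$ and $\nabla J$ are $h$-symmetric'' \emph{provided} $J$ itself is $h$-symmetric: indeed, if $\nabla J=0$ then $(\nabla_X J)Y=0$ for all $X,Y$, so $\nabla J$ is trivially $h$-symmetric, since $h((\nabla_X J)Y,Z)=0=h((\nabla_X J)Z,Y)$. Thus the only genuine point to check is whether $h$-symmetry of $J$ is automatic or must be assumed; since the generalized metallic structure $\hat J$ is built from $J$, $J^*$, and $h$ exactly as in the Proposition, the natural reading is that $J$ is taken $h$-symmetric and the corollary merely specializes the hypothesis ``$\nabla J$ is $h$-symmetric'' to the simpler sufficient condition $\nabla J=0$.

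Concretely, the plan is to write: assuming $\nabla J=0$, the tensor field $F(X,Y,Z)=h((\nabla_X J)Y,Z)$ vanishes identically. Consequently $\nabla J$ is $h$-symmetric (vacuously, as $F\equiv 0$), and the hypotheses of the preceding Proposition are satisfied with $J$ and $\nabla J$ both $h$-symmetric. Applying that Proposition directly yields
$$N_{\hat{J}}^{\nabla^{(\alpha)}}(X+\eta,Y+\beta)=N_{\hat{J}}^{\nabla}(X+\eta,Y+\beta),$$
for all $X,Y\in C^{\infty}(TM)$, $\eta,\beta\in C^{\infty}(T^*M)$ and all $\alpha\in\mathbb{R}$, which is the assertion.

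Alternatively, and perhaps more transparently, one can avoid invoking the Proposition as a black box and instead inspect the three computational blocks in its proof. In each block the difference $N_{\hat J}^{\nabla^{(\alpha)}}-N_{\hat J}^{\nabla}$ is expressed as $-\tfrac{1-\alpha}{2}$ times a combination of terms of the form $F(\cdot,\cdot,\cdot)$ (with arguments among $X,Y,Z,W$ and their images under $J$, $J^2$). Since $F\equiv 0$ under $\nabla J=0$, every such combination vanishes termwise, so the difference is zero in all three cases. This route has the advantage of being self-contained and of making visible that no cancellation among distinct $F$-terms is needed—each one is individually zero.

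I do not expect a genuine obstacle here: the statement is essentially a restatement of the Proposition under a hypothesis that forces $F=0$. The only subtlety worth flagging is the implicit assumption that $J$ is $h$-symmetric, which is needed for $\hat J$ to be a well-defined metallic structure of the stated matrix form and for $J^*$ to behave as in the lemmas; I would state this explicitly to keep the logic airtight, noting that it is inherited from the setup of the Proposition.
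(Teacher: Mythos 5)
Your proposal is correct and matches the paper's intended argument: the paper gives no separate proof for this corollary, treating it as an immediate specialization of the preceding Proposition, exactly as you do --- $\nabla J=0$ makes $\nabla J$ trivially $h$-symmetric (indeed $F\equiv 0$, so every correction term in the Proposition's proof vanishes individually). Your flagging of the implicitly assumed $h$-symmetry of $J$ (needed for $\hat J$ to be a generalized metallic structure of the stated form) is a legitimate and worthwhile clarification of an omission in the corollary's statement.
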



\subsection{${\hat \nabla}^{(\alpha)}$-parallel generalized structures}

\begin{lemma}\label{mm}
Let $h$ be a non-degenerate symmetric $(0,2)$-tensor field on $M$, let $J:TM \rightarrow TM$ be a $h$-symmetric $(1,1)$-tensor field and let $\hat{J}$ be either the generalized almost product structure
$\left(
                    \begin{array}{cc}
                      J & (I-J^2)h^{-1} \\
                      h & -J^* \\
                    \end{array}
                  \right)
$ or the generalized almost complex structure $\left(
                    \begin{array}{cc}
                      J & -(I+J^2)h^{-1} \\
                      h & -J^* \\
                    \end{array}
                  \right)
$ induced by $(h,J)$. Then:

i) $\hat{\nabla}\hat{J}=0$ if and only if $\nabla J=0$;

ii) $\hat{\nabla}^*\hat{J}=0$ if and only if $\nabla J=0$.
\end{lemma}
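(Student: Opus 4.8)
The plan is to compute $(\hat\nabla_{X+\eta}\hat J)(Y+\beta):=\hat\nabla_{X+\eta}(\hat J(Y+\beta))-\hat J(\hat\nabla_{X+\eta}(Y+\beta))$ directly and to read off when it vanishes identically. Two observations streamline everything. First, by (\ref{m5}) the operator $\hat\nabla_{X+\eta}$ depends only on the vector part $X$, so I may suppress $\eta$ and write $\hat\nabla_X$. Second, since $J$ is $h$-symmetric, the relation $h(JX,Y)=h(X,JY)$ is equivalent to the operator identity $hJ=J^*h$ (viewing $h\colon TM\to T^*M$), whence $h^{-1}J^*h=J$ and $h^{-1}(J^*)^2h=J^2$; these are exactly the relations needed to handle the off-diagonal blocks of $\hat J$.

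It is cleanest to trivialize $TM\oplus T^*M$ by the bundle isomorphism $Y+\beta\mapsto(Y,h^{-1}\beta)$ onto $TM\oplus TM$. Under this map, (\ref{m5}) shows that $\hat\nabla$ becomes the diagonal connection $\nabla\oplus\nabla$, i.e. $\hat\nabla_X(Y,B)=(\nabla_XY,\nabla_XB)$, and, using $h^{-1}J^*h=J$, the generalized almost product structure $\hat J$ becomes the constant-coefficient endomorphism $\left(\begin{smallmatrix}J & I-J^2\\ I & -J\end{smallmatrix}\right)$ acting on $(Y,B)$. For a connection of the form $\nabla\oplus\nabla$, differentiating an endomorphism given by such a block matrix acts entrywise, each block $A$ contributing $(\nabla_XA)$; concretely $\nabla_X(JY)-J\nabla_XY=(\nabla_XJ)Y$, and likewise for the other blocks. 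Thus $(\hat\nabla_X\hat J)(Y,B)=\big((\nabla_XJ)Y-(\nabla_XJ^2)B,\,-(\nabla_XJ)B\big)$. The lower block alone forces $\nabla_XJ=0$ for all $X$ when $\hat\nabla\hat J=0$; conversely $\nabla J=0$ makes every block vanish, since $\nabla_XJ^2=(\nabla_XJ)J+J(\nabla_XJ)$. This proves (i) for the product structure, and the complex structure $\hat J_-$ is handled identically, the only change being $I-J^2\mapsto-(I+J^2)$, which does not affect the decisive lower block.

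For (ii) I would use the dual trivialization $Z+\gamma\mapsto(hZ,\gamma)$ of $TM\oplus T^*M$ onto $T^*M\oplus T^*M$. By the explicit formula $\hat\nabla^*_{X+\eta}(Z+\gamma)=h^{-1}(\nabla_X(h(Z)))+\nabla_X\gamma$ recorded earlier, $\hat\nabla^*$ likewise becomes $\nabla\oplus\nabla$, while $\hat J$ becomes the same block matrix as before but with $J$ replaced throughout by $J^*$ (using $hJh^{-1}=J^*$ and $hJ^2h^{-1}=(J^*)^2$). The same entrywise computation yields $(\hat\nabla^*_X\hat J)=0$ for all $X$ if and only if $\nabla_XJ^*=0$ for all $X$. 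Finally, the identity $(\nabla_XJ^*)=(\nabla_XJ)^*$---checked by evaluating $((\nabla_XJ^*)\eta)(Y)=\eta((\nabla_XJ)Y)$---shows $\nabla J^*=0\iff\nabla J=0$, completing (ii). The main difficulty is purely organizational: keeping track of the $J^2$-blocks and confirming that it is the off-diagonal block, rather than the diagonal one which only sees $\nabla J^2$, that pins down $\nabla J=0$. The single structural fact driving both cases is the $h$-symmetry relation $hJh^{-1}=J^*$, which is what turns $\hat J$ into a clean constant-coefficient matrix in each trivialization.
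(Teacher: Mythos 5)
Correct, and essentially the paper's own argument in different clothing: the paper also proves the lemma by computing $(\hat\nabla_{X+\eta}\hat J)(Y+\beta)$ and $(\hat\nabla^*_{X+\eta}\hat J)(Y+\beta)$ directly and reading off that their vanishing is equivalent to $\nabla J=0$, resting on precisely the two facts you isolate, namely $hJ=J^*h$ (the $h$-symmetry of $J$) and $((\nabla_XJ^*)(\beta))(W)=\beta((\nabla_XJ)(W))$, i.e. $\nabla_XJ^*=(\nabla_XJ)^*$. Your trivializations $Y+\beta\mapsto(Y,h^{-1}\beta)$ and $Z+\gamma\mapsto(hZ,\gamma)$, under which $\hat\nabla$ and $\hat\nabla^*$ become $\nabla\oplus\nabla$ and $\hat J$ becomes a block matrix of $(1,1)$-tensors differentiated entrywise, are exactly the identifications implicit in the definitions of $\hat\nabla$ and $\hat\nabla^*$, so they repackage rather than replace the paper's expansion --- though your bookkeeping does make the ``only if'' direction (pinned down by the lower blocks $I$ and $-J$) especially transparent.
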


\begin{proof}
For any $X,Y\in C^{\infty}(TM)$ and any $\eta,\beta \in C^{\infty}(T^*M)$, we have:
$$(\hat{\nabla}_{X+\eta}\hat{J})(Y+\beta)=(\nabla_XJ)(Y)-(\nabla_XJ)(J(h^{-1}(\beta)))-J((\nabla_XJ)(h^{-1}(\beta)))-$$$$-h((\nabla_XJ)(h^{-1}(\beta)))).$$

Taking into account that for any $X,W\in C^{\infty}(TM)$ and any $\beta \in C^{\infty}(T^*M)$, we have:
$$((\nabla_XJ^*)(\beta))(W)=\beta((\nabla_XJ)(W)),$$
we obtain:
$$(\hat{\nabla}^*_{X+\eta}\hat{J})(Y+\beta)=h^{-1}((\nabla_XJ^*)(h(Y)))-h^{-1}((\nabla_XJ^*)(J^*\beta))-h^{-1}(J^*((\nabla_XJ^*)(\beta)))-
$$$$-(\nabla_XJ^*)(\beta).$$
Then the proof is complete.
\end{proof}

\begin{lemma}\label{aa}
Let $h$ be a non-degenerate symmetric $(0,2)$-tensor field on $M$, let $J:TM \rightarrow TM$ be a $h$-symmetric $(1,1)$-tensor field and let $\hat{J}$ be the generalized metallic structure $\hat{J}:=
 \left(
                    \begin{array}{cc}
                      -J+pI & (-J^2+pJ+qI)h^{-1} \\
                      h & J^* \\
                    \end{array}
                  \right)
 $, $p,q\in \mathbb{R}$, induced by $(h,J)$. Then:

i) $\hat{\nabla}\hat{J}=0$ if and only if $\nabla J=0$;

ii) $\hat{\nabla}^*\hat{J}=0$ if and only if $\nabla J=0$.
\end{lemma}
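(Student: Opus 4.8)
The plan is to imitate verbatim the proof of Lemma \ref{mm}, regarding $\hat{J}$ as an endomorphism of $TM\oplus T^*M$ and expanding the tensorial derivative through the Leibniz rule
$$(\hat{\nabla}_{X+\eta}\hat{J})(Y+\beta)=\hat{\nabla}_{X+\eta}(\hat{J}(Y+\beta))-\hat{J}(\hat{\nabla}_{X+\eta}(Y+\beta)),$$
working block by block. I first record how the metallic matrix acts: writing $A:=-J+pI$ and $B:=-J^2+pJ+qI$ for the two $(1,1)$-tensor fields occurring in $\hat J$, one has
$$\hat{J}(Y+\beta)=(AY+B(h^{-1}(\beta)))+(h(Y)+J^*(\beta)),$$
where the first summand lies in $C^\infty(TM)$ and the second in $C^\infty(T^*M)$. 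Note the only structural differences from Lemma \ref{mm} are the concrete blocks $A,B$ and the sign of the lower-right entry ($J^*$ rather than $-J^*$); these will change the bookkeeping constants but not the shape of the argument.

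Two algebraic identities will do all the work. The first is the $h$-symmetry of $J$, which is equivalent to $J^*\circ h=h\circ J$, hence to $h^{-1}\circ J^*=J\circ h^{-1}$; the second is the relation $((\nabla_X J^*)(\beta))(W)=\beta((\nabla_X J)(W))$ recalled inside the proof of Lemma \ref{mm}, which shows that $\nabla J^*$ is entirely governed by $\nabla J$, and in particular $\nabla J^*=0\iff\nabla J=0$.

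For part (i) I apply $\hat{\nabla}_{X+\eta}$ as in (\ref{m5}), i.e.\ differentiating the $TM$-component by $\nabla$ and the $T^*M$-component by the $h$-conjugated $\nabla$, and subtract $\hat{J}$ applied to $\hat{\nabla}_{X+\eta}(Y+\beta)$. Every term in which no derivative lands on $J$ cancels: the bottom-left $h$-entry produces a pair $h(\nabla_X Y)$ that cancels, and the cross-terms arising from $B\,h^{-1}$ and from $J^*$ cancel exactly because $J^*h=hJ$ lets $J^*$ pass through $h$. What survives is
$$(\hat{\nabla}_{X+\eta}\hat{J})(Y+\beta)=(\nabla_X A)Y+(\nabla_X B)(h^{-1}(\beta))+h((\nabla_X J)(h^{-1}(\beta))).$$
Since $\nabla_X A=-\nabla_X J$ and $\nabla_X B=-(\nabla_X J)J-J(\nabla_X J)+p\,\nabla_X J$, every summand carries a factor $\nabla_X J$. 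Thus $\nabla J=0$ immediately yields $\hat{\nabla}\hat{J}=0$; conversely, setting $\beta=0$ leaves $(\hat{\nabla}_{X+\eta}\hat{J})(Y)=-(\nabla_X J)Y$, so $\hat{\nabla}\hat{J}=0$ forces $\nabla J=0$, proving the equivalence.

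For part (ii) I repeat the same computation with $\hat{\nabla}^*$ in place of $\hat{\nabla}$, using $\hat{\nabla}^*_{X+\eta}(Z+\gamma)=h^{-1}(\nabla_X(h(Z)))+\nabla_X\gamma$; here the second identity converts all residual terms into $\nabla J^*$-terms, and since $\nabla J^*=0\iff\nabla J=0$ the desired equivalence follows by the same vanishing argument (taking $\beta=0$). The sole obstacle throughout is bookkeeping: one must keep the $TM$- and $T^*M$-components strictly separate and verify that the cross-contributions of the off-diagonal blocks annihilate each other via $J^*h=hJ$. There is no conceptual difficulty beyond what is already handled in Lemma \ref{mm}, and the conclusion that every leftover term is proportional to $\nabla J$ (respectively $\nabla J^*$) is exactly what makes both equivalences fall out.
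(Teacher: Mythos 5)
Your proposal is correct and takes essentially the same route as the paper's own proof: expand $\hat{\nabla}\hat{J}$ and $\hat{\nabla}^{*}\hat{J}$ block by block via the Leibniz rule, use $J^{*}h=hJ$ together with $((\nabla_X J^{*})(\beta))(W)=\beta((\nabla_X J)(W))$, and observe that every surviving term carries a factor $\nabla_X J$ (respectively $\nabla_X J^{*}$), so both equivalences follow by evaluating at $\beta=0$. Incidentally, your sign $+h((\nabla_X J)(h^{-1}(\beta)))$ on the $T^{*}M$-component is the correct one --- the paper's displayed formula keeps the $-$ sign from Lemma \ref{mm}, where the lower-right block is $-J^{*}$ rather than $J^{*}$ --- but this discrepancy affects neither argument, since the term is in any case proportional to $\nabla J$.
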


\begin{proof}
For any $X,Y\in C^{\infty}(TM)$ and any $\eta,\beta \in C^{\infty}(T^*M)$, we have:
$$(\hat{\nabla}_{X+\eta}\hat{J})(Y+\beta)=-(\nabla_XJ)(Y)-(\nabla_XJ)(J(h^{-1}(\beta)))-J((\nabla_XJ)(h^{-1}(\beta)))+p(\nabla_XJ)(h^{-1}(\beta))-$$$$-h((\nabla_XJ)(h^{-1}(\beta)))).$$

Taking into account that for any $X,W\in C^{\infty}(TM)$ and any $\beta \in C^{\infty}(T^*M)$, we have:
$$((\nabla_XJ^*)(\beta))(W)=\beta((\nabla_XJ)(W)),$$
we obtain:
$$(\hat{\nabla}^*_{X+\eta}\hat{J})(Y+\beta)=-h^{-1}((\nabla_XJ^*)(h(Y)))-h^{-1}((\nabla_XJ^*)(J^*\beta))-$$$$-h^{-1}(J^*((\nabla_XJ^*)(\beta)))+
ph^{-1}((\nabla_XJ^*)(\beta))+
$$$$+(\nabla_XJ^*)(\beta).$$
Then the proof is complete.
\end{proof}

By a direct computation, for any $X,Y\in C^{\infty}(TM)$ and any $\eta,\beta \in C^{\infty}(T^*M)$, we get:
$$(\hat{\nabla}^{(\alpha)}_{X+\eta}\hat{J})(Y+\beta)=\frac{1+\alpha}{2}(\hat{\nabla}_{X+\eta}\hat{J})(Y+\beta)+
\frac{1-\alpha}{2}(\hat{\nabla}^*_{X+\eta}\hat{J})(Y+\beta),$$
and using the previous two lemmas, we can state:
\begin{proposition}
Let $h$ be a non-degenerate symmetric $(0,2)$-tensor field on $M$, let $J:TM \rightarrow TM$ be a $h$-symmetric $(1,1)$-tensor field and let $\hat{J}$ be either the generalized almost product structure, the generalized almost complex structure from Lemma \ref{mm} or the generalized metallic structure from Lemma \ref{aa}. Then for any $\alpha\in \mathbb{R}$, $\hat{\nabla}^{(\alpha)}\hat{J}=0$ if and only if $\nabla J=0$.
\end{proposition}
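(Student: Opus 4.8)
The plan is to read both implications off the linear decomposition
$(\hat{\nabla}^{(\alpha)}_{X+\eta}\hat{J})(Y+\beta)=\frac{1+\alpha}{2}(\hat{\nabla}_{X+\eta}\hat{J})(Y+\beta)+\frac{1-\alpha}{2}(\hat{\nabla}^*_{X+\eta}\hat{J})(Y+\beta)$
displayed just above, together with Lemmas \ref{mm} and \ref{aa}. The ``if'' direction is immediate: if $\nabla J=0$, then part~i) of the applicable lemma gives $\hat\nabla\hat J=0$ and part~ii) gives $\hat\nabla^*\hat J=0$, so the combination $\hat\nabla^{(\alpha)}\hat J$ vanishes for every $\alpha$, at the cost of merely quoting the two lemmas.

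The converse is the real content, and for it I would first recast the lemma formulas in operator form. Write $A:=\nabla_XJ$ and let $A^\dagger$ be its $h$-adjoint, $h(A^\dagger Y,W)=h(Y,AW)$. The identity $((\nabla_XJ^*)(\beta))(W)=\beta((\nabla_XJ)W)$ from the lemmas, combined with the $h$-symmetry of $J$ (equivalently $J^*=hJh^{-1}$), shows that the formula for $\hat\nabla^*_X\hat J$ is obtained from that for $\hat\nabla_X\hat J$ by replacing $A$ throughout with $A^\dagger$. Consequently, as an endomorphism of $TM\oplus T^*M$, the operator $\hat\nabla^{(\alpha)}_X\hat J$ is governed by the single block $B:=\frac{1+\alpha}{2}A+\frac{1-\alpha}{2}A^\dagger$: its $TM\to TM$ block is $B$, its $T^*M\to T^*M$ block is $-hBh^{-1}$, its $T^*M\to TM$ block is $-(BJ+JB)h^{-1}$, and its $TM\to T^*M$ block vanishes identically. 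Hence $\hat\nabla^{(\alpha)}_X\hat J=0$ is equivalent to $B=0$, i.e. to $\frac{1+\alpha}{2}A+\frac{1-\alpha}{2}A^\dagger=0$ for every $X$.

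Taking the $h$-adjoint of this equation gives the companion identity $\frac{1+\alpha}{2}A^\dagger+\frac{1-\alpha}{2}A=0$; viewing the two as a linear system in $(A,A^\dagger)$ with determinant $\alpha$, I conclude $A=A^\dagger=0$, that is $\nabla_XJ=0$, whenever $\alpha\neq0$. The main obstacle is precisely the value $\alpha=0$: there the two equations collapse to $A+A^\dagger=0$, which only says that $\nabla_XJ$ is $h$-skew and does \emph{not} by itself force $\nabla J=0$ (one can cook up an $h$-symmetric $J$ and a connection with $\nabla h\neq0$ making $\nabla J$ a nonzero $h$-skew tensor). To close this case I would invoke the extra input that $\nabla J$ be $h$-symmetric — which holds, for instance, whenever $\nabla h=0$ by the earlier Remark — so that $h$-skew together with $h$-symmetric yields $\nabla_XJ=0$. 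I expect the block bookkeeping to be routine once the substitution $A\mapsto A^\dagger$ relating the two lemma formulas is in place; the only genuine subtlety, and the step I would flag explicitly, is the degenerate $\alpha=0$ case.
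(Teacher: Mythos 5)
Your proposal is correct in substance and does strictly more than the paper's own proof, which consists of nothing beyond the displayed decomposition $\hat{\nabla}^{(\alpha)}\hat{J}=\frac{1+\alpha}{2}\hat{\nabla}\hat{J}+\frac{1-\alpha}{2}\hat{\nabla}^{*}\hat{J}$ and the words ``using the previous two lemmas''. That argument settles the ``if'' direction exactly as you do, and it settles the ``only if'' direction only under the reading in which the hypothesis is ``$\hat{\nabla}^{(\alpha)}\hat{J}=0$ for \emph{all} $\alpha$'' (then $\alpha=\pm1$ already gives $\hat{\nabla}\hat{J}=\hat{\nabla}^{*}\hat{J}=0$ and Lemma \ref{mm} or \ref{aa} finishes). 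For a \emph{fixed} $\alpha$ the paper offers no argument at all: a nontrivial convex combination of two nonzero tensors can of course vanish. Your block computation supplies precisely the missing step, and it is right: with $A=\nabla_XJ$ and $A^{\dagger}$ its $h$-adjoint one has $h^{-1}((\nabla_XJ^{*})(hY))=A^{\dagger}Y$ and $(\nabla_XJ^{*})\beta=\beta\circ A=h(A^{\dagger}h^{-1}\beta)$, so the formulas of Lemmas \ref{mm} and \ref{aa} for $\hat{\nabla}^{*}\hat{J}$ are indeed those for $\hat{\nabla}\hat{J}$ with $A$ replaced by $A^{\dagger}$, every block of $\hat{\nabla}^{(\alpha)}_X\hat{J}$ is expressed through $B=\frac{1+\alpha}{2}A+\frac{1-\alpha}{2}A^{\dagger}$ (in the metallic case the $T^*M\to TM$ block acquires an extra $+pB$, which changes nothing), and the adjoint trick with determinant $\alpha$ gives $A=0$ whenever $\alpha\neq0$. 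This is a genuinely different and sharper route than the paper's.

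Your flagging of $\alpha=0$ is not excessive caution: under the fixed-$\alpha$ reading the proposition is in fact \emph{false} there, even with the standing assumption of Section 4 that $(h,\nabla)$ is statistical. Concretely, on $\mathbb{R}^2$ take $h=\delta$ the Euclidean metric, $J=\operatorname{diag}(1,-1)$, and the torsion-free connection whose only nonzero Christoffel symbols are $\Gamma^1_{12}=\Gamma^1_{21}=\Gamma^2_{11}=1$; these are totally symmetric, so $(\delta,\nabla)$ is a statistical structure and $J$ is $h$-symmetric, while $\nabla_{\partial_i}J=[\Gamma_i,J]$ is a commutator of symmetric matrices, hence $h$-skew, with $\nabla_{\partial_1}J=\left(\begin{smallmatrix}0&-2\\2&0\end{smallmatrix}\right)\neq0$. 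Then $A^{\dagger}=-A$, so $B=\frac12(A+A^{\dagger})=0$ and $\hat{\nabla}^{(0)}\hat{J}=0$ although $\nabla J\neq0$. So either one reads the quantifier over $\alpha$ as sitting inside the left-hand side of the equivalence (in which case your machinery, though correct, is more than is needed), or one must add a hypothesis for $\alpha=0$, e.g.\ that $\nabla J$ is $h$-symmetric (automatic when $\nabla h=0$), exactly as you propose. The one caveat about your write-up is that this repair proves a modified statement, not the proposition as printed; but the counterexample shows that this is unavoidable, and identifying that defect is the real value of your analysis over the paper's two-line proof.
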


\section{$\alpha$-connection of the twin metric}

\subsection{Statistical structure defined by the twin metric}

Let $(M,g)$ be a pseudo-Riemannian manifold, let $\nabla$ be the Levi-Civita connection of $g$ and let $J$ be a $g$-symmetric $(1,1)$-tensor field. We can prove the following.

\begin{lemma} $$g((\nabla_X J)Y,Z)=g((\nabla _X J)Z,Y),$$
for any $X,Y,Z \in C^{\infty}(TM)$.
\end{lemma}
\begin{proof} Since $\nabla$ is the Levi-Civita connection of $g$ and $J$ is $g$-symmetric, we have:
$$g((\nabla_X J)Z,Y)=g(\nabla _X JZ,Y)-g(J(\nabla _X Z),Y)=$$
$$=X(g(JZ,Y))-g(JZ,\nabla_X Y)-g(J(\nabla_X Z),Y)=$$
$$=X(g(Z,JY))-g(Z,J(\nabla_XY))-g(\nabla_X Z,JY)=$$
$$=g(\nabla_X Z,JY)+g(Z,\nabla_X JY)-g(Z,J(\nabla_X Y))-g(\nabla_X Z,JY)=$$
$$=g((\nabla_XJ)Y,Z).$$
\end{proof}

Let us suppose that $J$ is invertible and let $\tilde g$ be the twin metric defined by $g$ and $J$:
$$\tilde g(X,Y):=g(X,JY),$$
for any $X,Y \in C^{\infty}(TM)$.
We have the following.

\begin{lemma} $$(\nabla_X \tilde g)(Y,Z)=g((\nabla_X J)Z,Y),$$
for any $X,Y,Z \in C^{\infty}(TM)$.
\end{lemma}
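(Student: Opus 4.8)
The plan is to prove this by a direct computation starting from the definition of the covariant derivative of a $(0,2)$-tensor field, exploiting the fact that $\nabla$ is the Levi-Civita connection of $g$ and hence metric.

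First I would write out, for arbitrary $X,Y,Z\in C^{\infty}(TM)$,
$$(\nabla_X \tilde g)(Y,Z)=X(\tilde g(Y,Z))-\tilde g(\nabla_X Y,Z)-\tilde g(Y,\nabla_X Z),$$
and then substitute the defining relation $\tilde g(Y,Z)=g(Y,JZ)$ into each term, obtaining
$$(\nabla_X \tilde g)(Y,Z)=X(g(Y,JZ))-g(\nabla_X Y,JZ)-g(Y,J(\nabla_X Z)).$$

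Next I would invoke metricity of the Levi-Civita connection, $\nabla g=0$, in the form $X(g(Y,JZ))=g(\nabla_X Y,JZ)+g(Y,\nabla_X(JZ))$. Substituting this into the previous line cancels the $g(\nabla_X Y,JZ)$ terms and leaves
$$(\nabla_X \tilde g)(Y,Z)=g(Y,\nabla_X(JZ))-g(Y,J(\nabla_X Z))=g\bigl(Y,(\nabla_X J)Z\bigr),$$
since $(\nabla_X J)Z=\nabla_X(JZ)-J(\nabla_X Z)$ by definition of the induced connection on the $(1,1)$-tensor $J$. Finally, using the symmetry of $g$, I rewrite $g(Y,(\nabla_X J)Z)=g((\nabla_X J)Z,Y)$, which is exactly the claimed identity.

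There is essentially no serious obstacle here: the argument is a routine Leibniz-rule expansion followed by one application of metricity. The only point requiring a little care is the consistent placement of $J$ and the correct expansion of $(\nabla_X J)Z$; note also that neither the $g$-symmetry of $J$ nor the preceding lemma on the symmetry of $g((\nabla_X J)\cdot,\cdot)$ is actually needed to reach this particular form, though those facts will be what allows the right-hand side to be recast symmetrically in its last two slots when the result is used subsequently.
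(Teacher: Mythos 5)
Your proof is correct and follows essentially the same route as the paper's: a Leibniz expansion of $(\nabla_X\tilde g)(Y,Z)$, substitution of $\tilde g(\cdot,\cdot)=g(\cdot,J\cdot)$, one application of metricity of $\nabla$, cancellation, and recognition of $(\nabla_XJ)Z=\nabla_X(JZ)-J(\nabla_XZ)$ followed by the symmetry of $g$. Your closing observation that neither the $g$-symmetry of $J$ nor the preceding lemma is needed here is also accurate, since the paper's own computation uses neither.
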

\begin{proof}
$$(\nabla_X \tilde g)(Y,Z)=X(\tilde g (Y,Z))-\tilde g(\nabla_X Y,Z)-\tilde g(Y,\nabla_X Z)=$$
$$=X(g(Y,JZ))-g(\nabla_XY,JZ)-g(Y,J(\nabla_X Z))=$$
$$=g(\nabla_XY,JZ)+g(Y,\nabla_X JZ)-g(\nabla_X Y,JZ)-g(Y,J(\nabla_X Z))=$$
$$=g((\nabla_XJ)Z,Y).$$
\end{proof}

\begin{proposition} Let $(M,g)$ be a pseudo-Riemannian manifold, let $\nabla$ be the Levi-Civita connection of $g$ and let $J$ be a $g$-symmetric $(1,1)$-tensor field. Assume that $J$ is invertible and let $\tilde g$ be the twin metric defined by $g$ and $J$.
Then $(M,\tilde g,\nabla)$ is a statistical manifold if and only if
$$(\nabla_X J)Y=(\nabla _Y J)X,$$
for any $X,Y \in C^{\infty}(TM)$.
\end{proposition}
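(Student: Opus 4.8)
The plan is to reduce the statistical condition to an identity about $\nabla J$ using the two lemmas that immediately precede the statement. Recall that $(M,\tilde g,\nabla)$ is a statistical manifold means that $\nabla$ is torsion-free, $\tilde g$ is a non-degenerate $(0,2)$-tensor field, and $\nabla\tilde g$ is a Codazzi tensor, i.e. $(\nabla_X\tilde g)(Y,Z)=(\nabla_Y\tilde g)(X,Z)$ for all $X,Y,Z\in C^{\infty}(TM)$. First I would dispose of the hypotheses that do not depend on the stated condition: since $\nabla$ is the Levi-Civita connection of $g$, it is automatically torsion-free; and since $J$ is $g$-symmetric and invertible, the twin metric $\tilde g(X,Y)=g(X,JY)$ is symmetric (because $g(X,JY)=g(JX,Y)=g(Y,JX)=\tilde g(Y,X)$) and non-degenerate (because $g$ is non-degenerate and $J$ is invertible). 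Thus the whole statement collapses to the equivalence between the Codazzi condition for $\tilde g$ and the symmetry $(\nabla_XJ)Y=(\nabla_YJ)X$.

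Next I would translate the Codazzi condition into the language of $\nabla J$. Applying the second lemma, which gives $(\nabla_X\tilde g)(Y,Z)=g((\nabla_XJ)Z,Y)$, to both sides shows that $(\nabla_X\tilde g)(Y,Z)=(\nabla_Y\tilde g)(X,Z)$ is equivalent to
$$g((\nabla_XJ)Z,Y)=g((\nabla_YJ)Z,X),$$
for all $X,Y,Z$. Now I would invoke the first lemma, $g((\nabla_WJ)A,B)=g((\nabla_WJ)B,A)$, to swap the two vector arguments on each side: the left-hand side becomes $g((\nabla_XJ)Y,Z)$ and the right-hand side becomes $g((\nabla_YJ)X,Z)$. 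Hence the Codazzi condition is equivalent to
$$g((\nabla_XJ)Y,Z)=g((\nabla_YJ)X,Z),$$
for all $X,Y,Z\in C^{\infty}(TM)$.

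Finally, since this holds for every $Z$ and $g$ is non-degenerate, it is equivalent to the pointwise identity $(\nabla_XJ)Y=(\nabla_YJ)X$ for all $X,Y$, which is exactly the asserted condition; reading the chain backwards gives the converse. The argument is essentially bookkeeping once the two lemmas are in hand, so I do not anticipate a genuine obstacle. The only point demanding care is the correct tracking of the argument slots: the second lemma places $Z$ in the covariant-derivative slot of $J$ and $Y$ in the other slot, so one must apply the symmetry of the first lemma to each term separately in order to land on the clean symmetric-in-$X,Y$ form, rather than conflating the two index conventions.
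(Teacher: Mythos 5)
Your proposal is correct and follows essentially the same route as the paper: expressing $\nabla_X\tilde g$ via the second lemma, swapping arguments via the $g$-symmetry of $\nabla J$ from the first lemma, and concluding by non-degeneracy of $g$. The only difference is your explicit (and welcome) verification that $\tilde g$ is symmetric and non-degenerate and that $\nabla$ is torsion-free, which the paper leaves implicit.
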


\begin{proof} From the previous lemmas, we get:
$$(\nabla_X \tilde g)(Y,Z)-(\nabla_Y \tilde g)(X,Z)=g((\nabla_XJ)Z,Y)-g((\nabla_YJ)Z,X)=$$
$$=g((\nabla_XJ)Y,Z)-g((\nabla_YJ)X,Z)=g((\nabla_XJ)Y-(\nabla_YJ)X,Z).$$
Then the statement.
\end{proof}

\subsection{Dualistic structure defined by the twin metric}

Let $(M,g)$ be a pseudo-Riemannian manifold and let $\nabla$ be the Levi-Civita connection of $g$. Let $J$ be a $g$-symmetric $(1,1)$-tensor field and let $\tilde g$ be the twin metric defined by $g$ and $J$. We have the following.
\begin{proposition} If $J$ is invertible, then $(\tilde g, \nabla)$ define a dualistic structure and the dual connection $\nabla^*$ is given by:
$$\nabla_X ^* Y=\nabla_X Y+J^{-1}((\nabla_X J)Y),$$
for any $X,Y \in C^{\infty}(TM)$.
\end{proposition}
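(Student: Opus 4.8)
The plan is to reduce everything to the general formula for a dual connection established in Section 2.2, so that the only real work is the identification of the musical isomorphism of $\tilde g$ with those of $g$ and $J$.

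First I would check that $\tilde g$ is admissible for that machinery. Since $J$ is $g$-symmetric, $\tilde g(X,Y)=g(X,JY)=g(JX,Y)=\tilde g(Y,X)$, so $\tilde g$ is symmetric; and since $g$ is non-degenerate and $J$ is invertible, $\tilde g$ is non-degenerate. Hence $(\tilde g,\nabla)$ admits a (unique) dual connection $\nabla^*$, which by the Lemma of Section 2.2 (applied with $h=\tilde g$) is
\[
\nabla^*_X Y = \nabla_X Y + \tilde g^{-1}((\nabla_X \tilde g)(Y)).
\]
Thus the content of the proposition is to evaluate the correction term. Using the preceding lemma, $(\nabla_X\tilde g)(Y,Z)=g((\nabla_X J)Z,Y)$, and by the first lemma of this section $\nabla_X J$ is $g$-symmetric, so $(\nabla_X\tilde g)(Y,Z)=g((\nabla_X J)Y,Z)$; that is, the $1$-form $(\nabla_X\tilde g)(Y)$ is exactly $g((\nabla_X J)Y,\cdot)$, the $g$-lowering of the vector $(\nabla_X J)Y$.

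The one genuinely delicate point — and the step I expect to be the main obstacle — is relating $\tilde g^{-1}$ to $g^{-1}$ and $J^{-1}$. The key observation is that lowering with $\tilde g$ equals lowering with $g$ after applying $J$: for the flat maps $\tilde g^\flat(V)=\tilde g(V,\cdot)=g(JV,\cdot)=g^\flat(JV)$, hence $\tilde g^\flat=g^\flat\circ J$ and therefore $\tilde g^{-1}=J^{-1}\circ(g^\flat)^{-1}$. Applying this to the $1$-form $g^\flat((\nabla_X J)Y)$ gives $\tilde g^{-1}((\nabla_X\tilde g)(Y))=J^{-1}((\nabla_X J)Y)$, and substituting into the displayed formula yields $\nabla^*_X Y=\nabla_X Y+J^{-1}((\nabla_X J)Y)$, as claimed.

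As a self-contained alternative that avoids the musical bookkeeping, I could instead verify the defining duality identity directly with the proposed $\nabla^*$. One computes $\tilde g(Y,\nabla^*_X Z)=\tilde g(Y,\nabla_X Z)+g(Y,(\nabla_X J)Z)$, using $\tilde g(Y,J^{-1}W)=g(Y,JJ^{-1}W)=g(Y,W)$. Then $\tilde g(\nabla_X Y,Z)+\tilde g(Y,\nabla^*_X Z)=X(\tilde g(Y,Z))-(\nabla_X\tilde g)(Y,Z)+g(Y,(\nabla_X J)Z)=X(\tilde g(Y,Z))$, since $(\nabla_X\tilde g)(Y,Z)=g((\nabla_X J)Z,Y)=g(Y,(\nabla_X J)Z)$ cancels the extra term. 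This both confirms that $(\tilde g,\nabla)$ is a dualistic structure and, by non-degeneracy of $\tilde g$, pins down $\nabla^*$ to the stated formula.
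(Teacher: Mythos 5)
Your proposal is correct, but both of your arguments take routes different from the paper's. The paper solves the duality identity directly: writing $\tilde g(\cdot,\cdot)=g(\cdot,J\cdot)$ and using only that $\nabla$ is the Levi-Civita connection of $g$, the identity $X(\tilde g(Y,Z))=\tilde g(\nabla_XY,Z)+\tilde g(Y,\nabla^*_XZ)$ collapses to $g(Y,\nabla_X(JZ))=g(Y,J(\nabla^*_XZ))$, so non-degeneracy of $g$ yields the intertwining relation $J(\nabla^*_XZ)=\nabla_X(JZ)$, and expanding $\nabla_X(JZ)=(\nabla_XJ)Z+J(\nabla_XZ)$ gives the stated formula; no other lemma is invoked. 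Your main argument instead specializes the general machinery: Lemma 2.2 applied to $h=\tilde g$ (after checking $\tilde g$ is symmetric and non-degenerate), the two lemmas of Section 5.1 (to get $(\nabla_X\tilde g)(Y)=g^\flat((\nabla_XJ)Y)$, which needs the $g$-symmetry of $\nabla_XJ$), and the new observation $\tilde g^\flat=g^\flat\circ J$, hence $\tilde g^{-1}=J^{-1}\circ(g^\flat)^{-1}$. This is heavier, but it exhibits the twin-metric statement as an instance of the paper's general dual-connection formula rather than an ad hoc computation, and your flat-map identity is really the paper's intertwining relation $J\circ\nabla^*_X=\nabla_X\circ J$ in disguise. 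Your fallback argument --- plugging the candidate $\nabla^*$ into the duality identity and checking it --- is essentially the paper's computation run in reverse (verification instead of derivation), using the Section 5.1 lemma on $\nabla\tilde g$ where the paper uses metricity of $g$ directly; it has the merit of making existence and uniqueness of the dual connection explicit. All steps in both of your arguments are sound.
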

\begin{proof} We have:
$$X(\tilde g (Y,Z))=\tilde g(\nabla_X Y,Z)+\tilde g(Y,\nabla_X^* Z)$$
if and only if
$$X(g(Y,JZ))-g(\nabla_XY,JZ)=g(Y,J(\nabla^*_X Z)).$$
Then
$$g(Y,\nabla_X JZ)=g(Y,J(\nabla_X^*Z)),$$
for any $X,Y,Z \in C^{\infty}(TM)$. Hence $J(\nabla_X^*Y)=\nabla_X JY$, or:
$$\nabla_X ^* Y=\nabla_X Y+J^{-1}((\nabla_X J)Y).$$
\end{proof}

Moreover, a direct computation gives the following.
\begin{corollary}
$$\nabla_X ^* \eta=\nabla_X \eta-(J^{-1}(\nabla_X J))^*\eta,$$
for any $X \in C^{\infty}(TM)$ and for any $\eta \in C^{\infty}(T^*M).$
\end{corollary}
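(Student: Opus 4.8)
The plan is to extend the dual connection $\nabla^*$ to the cotangent bundle by the usual duality rule and then substitute the expression for $\nabla^*_X Y$ obtained in the previous proposition. Following the same convention already used for $\nabla^*_X\beta$ in the Lemma on dualistic structures, the induced action on a $1$-form $\eta$ is governed, for every $Y\in C^{\infty}(TM)$, by the Leibniz identity
$$(\nabla^*_X\eta)(Y)=X(\eta(Y))-\eta(\nabla^*_X Y).$$

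First I would insert $\nabla^*_X Y=\nabla_X Y+J^{-1}((\nabla_X J)Y)$ from the preceding proposition into this identity, obtaining
$$(\nabla^*_X\eta)(Y)=X(\eta(Y))-\eta(\nabla_X Y)-\eta\big(J^{-1}((\nabla_X J)Y)\big).$$
The first two terms recombine into the Levi-Civita action on forms, since by definition $(\nabla_X\eta)(Y)=X(\eta(Y))-\eta(\nabla_X Y)$. Thus the only remaining task is to interpret the last term as the announced transpose.

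Next I would recall the transpose convention fixed earlier, namely $(J^*\eta)(X):=\eta(JX)$, which more generally reads $(A^*\eta)(Y)=\eta(AY)$ for any $(1,1)$-tensor field $A$. For each fixed $X$, the assignment $Y\mapsto J^{-1}((\nabla_X J)Y)$ is a genuine $(1,1)$-tensor field, which we denote $J^{-1}(\nabla_X J)$; here one uses that $J$ is invertible, so $J^{-1}$ is smooth and tensorial, and that $\nabla_X J$ is itself a $(1,1)$-tensor field. Hence $\eta\big(J^{-1}((\nabla_X J)Y)\big)=\big((J^{-1}(\nabla_X J))^*\eta\big)(Y)$, and combining the displays yields $(\nabla^*_X\eta)(Y)=(\nabla_X\eta)(Y)-\big((J^{-1}(\nabla_X J))^*\eta\big)(Y)$ for all $Y$, which is exactly the claimed formula.

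The computation is entirely routine and I expect no genuine obstacle beyond bookkeeping. The one point deserving care is the transpose convention: one must ensure that $(J^{-1}(\nabla_X J))^*$ is taken with respect to the duality pairing between $T^*M$ and $TM$ — the same pairing underlying $(J^*\eta)(X):=\eta(JX)$ — rather than with respect to $g$ or $\tilde g$, since using a metric adjoint would spuriously introduce an extra factor of $J$ and break the identification of the last term.
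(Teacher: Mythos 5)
Your proof is correct and is exactly the ``direct computation'' the paper alludes to (it gives no explicit proof): extend $\nabla^*$ to $1$-forms by $(\nabla^*_X\eta)(Y)=X(\eta(Y))-\eta(\nabla^*_XY)$, substitute $\nabla^*_XY=\nabla_XY+J^{-1}((\nabla_XJ)Y)$, and identify the residual term via the pairing transpose $(A^*\eta)(Y)=\eta(AY)$, consistent with the paper's convention $(J^*\eta)(X):=\eta(JX)$. Your closing caution about using the duality transpose rather than a metric adjoint is also the right point to flag.
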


In particular, the $\alpha$-connection defined by the twin metric has the following expression:
$$\nabla^{(\alpha)}=\nabla-\frac{1-\alpha}{2}J^{-1}(\nabla J).$$

\subsection{Generalized geometry for the dualistic structure defined by the twin metric}

Let $(M,g)$ be a pseudo-Riemannian manifold, let $\nabla$ be the Levi-Civita connection of $g$ and let $J$ be a $g$-symmetric $(1,1)$-tensor field such that
$$(d^{\nabla}J)(X,Y):=(\nabla_X J)Y-(\nabla_Y J)X=0,$$
for any $X,Y \in C^{\infty}(TM)$.

Let $(\tilde g, \nabla)$ be the statistical structure on $M$ defined by $g$ and $J$ and let $(\tilde g, \nabla,\nabla ^*)$ be the corresponding dualistic structure.

Remark that from the condition $d^\nabla J=0$ it follows that the tensor field $F$ defined by
$$F(X,Y,Z):=\tilde g ((\nabla_X J)Y,Z)$$ satisfies:
$$F(X,Y,Z)= \tilde g ((\nabla_X J)Y,Z)=\tilde g ((\nabla_Y J)X,Z)=F(Y,X,Z),$$
for any $X,Y,Z \in C^{\infty}(TM).$

As a consequence, we can restate Propositions \ref{bb} and \ref{dd} as follows.
\begin{proposition} Let $\hat{J}:=\left(
                    \begin{array}{cc}
                      J & (I-J^2){\tilde g}^{-1} \\
                      \tilde g & -J^* \\
                    \end{array}
                  \right)
$ be the generalized almost product structure induced by $(\tilde g,J)$. Then
 the Nijenhuis tensor field of $\hat{J}$ with respect to $[\cdot,\cdot]_{\nabla^{(\alpha)}}$ satisfies:
$$N_{\hat{J}}^{\nabla^{(\alpha)}}(X+\eta,Y+\beta)=N_{\hat{J}}^{\nabla}(X+\eta,Y+\beta),$$ for any $X,Y\in C^{\infty}(TM)$, $\eta,\beta \in C^{\infty}(T^*M)$ and for any $\alpha\in \mathbb{R}$ if and only if the tensor field $F$ satisfies the following condition:
$$F(Y,Z,X)-F(X,Z,Y)=0,$$
for any $X,Y,Z\in C^{\infty}(TM)$.
\end{proposition}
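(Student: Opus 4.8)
The plan is to deduce this statement directly from Proposition~\ref{bb} applied with $h=\tilde g$, the only genuinely new ingredient being the first-slot symmetry of $F$ that follows from $d^\nabla J=0$. So the first task is to confirm that the triple $(M,\tilde g,\nabla)$ together with $J$ satisfies every hypothesis of Proposition~\ref{bb}.

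I would begin by checking that $\tilde g$ is a non-degenerate symmetric $(0,2)$-tensor field and that $J$ is $\tilde g$-symmetric. Symmetry of $\tilde g$ follows from the $g$-symmetry of $J$, since $\tilde g(X,Y)=g(X,JY)=g(JX,Y)=g(Y,JX)=\tilde g(Y,X)$, while non-degeneracy is immediate from the invertibility of $J$ and the non-degeneracy of $g$. For the $\tilde g$-symmetry of $J$ one computes $\tilde g(JX,Y)=g(JX,JY)=g(X,J^2Y)=\tilde g(X,JY)$, again using that $J$ is $g$-symmetric. Finally, $\nabla$ is the Levi-Civita connection of $g$, hence torsion-free, and by the Proposition of the preceding subsection the assumption $d^\nabla J=0$ (equivalently $(\nabla_X J)Y=(\nabla_Y J)X$) guarantees that $(M,\tilde g,\nabla)$ is a statistical manifold. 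Thus all assumptions of Proposition~\ref{bb} hold with $h=\tilde g$, and the tensor field $F(X,Y,Z)=\tilde g((\nabla_X J)Y,Z)$ of the present statement is exactly the $F$ occurring there.

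With the hypotheses secured, Proposition~\ref{bb} gives at once that $N_{\hat J}^{\nabla^{(\alpha)}}=N_{\hat J}^{\nabla}$ for every $\alpha$ if and only if
$$F(X,Y,Z)+F(Y,Z,X)-F(X,Z,Y)-F(Y,X,Z)=0,$$
for all $X,Y,Z\in C^\infty(TM)$. The remaining step is purely algebraic: by the remark preceding the statement, $d^\nabla J=0$ yields $F(X,Y,Z)=F(Y,X,Z)$, so the first and last terms cancel and the condition collapses to $F(Y,Z,X)-F(X,Z,Y)=0$, which is precisely the asserted criterion.

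I do not anticipate a real obstacle here, since the lengthy Nijenhuis computation has already been carried out in full generality in the proof of Proposition~\ref{bb}; nothing needs to be recomputed. The only points requiring care are to verify that the $F$ of this statement literally coincides with the $F$ of Proposition~\ref{bb} under the identification $h=\tilde g$, and to apply the first-slot symmetry $F(X,Y,Z)=F(Y,X,Z)$ correctly when reducing the four-term expression to its two-term form.
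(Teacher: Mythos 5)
Your proposal is correct and is exactly the paper's own route: the paper presents this proposition as a restatement of Proposition~\ref{bb} with $h=\tilde g$, using the remark that $d^\nabla J=0$ forces the first-slot symmetry $F(X,Y,Z)=F(Y,X,Z)$, which cancels the first and fourth terms of the four-term condition. Your extra verification that $(M,\tilde g,\nabla)$ and $J$ satisfy all hypotheses of Proposition~\ref{bb} (symmetry and non-degeneracy of $\tilde g$, $\tilde g$-symmetry of $J$, the statistical property) is left implicit in the paper but is exactly what is needed.
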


\begin{proposition} Let $\hat{J}_-:=\left(
                    \begin{array}{cc}
                      J & -(I+J^2){\tilde g}^{-1} \\
                      \tilde g & -J^* \\
                    \end{array}
                  \right)
$ be the generalized almost complex structure induced by $(\tilde g,J)$. Then  the Nijenhuis tensor field of $\hat{J}_-$ with respect to $[\cdot,\cdot]_{\nabla^{(\alpha)}}$ satisfies:
$$N_{\hat{J}_-}^{\nabla^{(\alpha)}}(X+\eta,Y+\beta)=N_{\hat{J}_-}^{\nabla}(X+\eta,Y+\beta),$$ for any $X,Y\in C^{\infty}(TM)$, $\eta,\beta \in C^{\infty}(T^*M)$ and for any $\alpha\in \mathbb{R}$ if and only if the tensor field $F$ satisfies the following condition:
$$F(Y,Z,X)-F(X,Z,Y)=0,$$
for any $X,Y,Z\in C^{\infty}(TM)$.
\end{proposition}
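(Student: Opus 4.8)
The plan is to obtain this statement as a direct specialization of Proposition \ref{dd}, taking the non-degenerate symmetric tensor field $h$ there to be the twin metric $\tilde g$. First I would verify that the twin-metric data fit the hypotheses of Proposition \ref{dd}. The tensor $\tilde g$ is symmetric and, since $J$ is invertible, non-degenerate; $\nabla$ is the Levi-Civita connection of $g$, hence torsion-free; and the Proposition on the statistical structure defined by the twin metric shows that the assumption $d^\nabla J=0$ makes $(M,\tilde g,\nabla)$ a statistical manifold. It remains to confirm that $J$ is $\tilde g$-symmetric, which follows from the $g$-symmetry of $J$: for all $X,Y\in C^\infty(TM)$ one has $\tilde g(JX,Y)=g(JX,JY)=g(X,J^2Y)=\tilde g(X,JY)$. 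Since the matrix defining $\hat{J}_-$ here is verbatim the one in Proposition \ref{dd} with $h$ replaced by $\tilde g$, all the hypotheses are met.

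Next I would simply invoke Proposition \ref{dd}, which asserts that the equality $N_{\hat{J}_-}^{\nabla^{(\alpha)}}=N_{\hat{J}_-}^{\nabla}$ for all $\alpha$ is equivalent to
$$F(X,Y,Z)+F(Y,Z,X)-F(X,Z,Y)-F(Y,X,Z)=0$$
for all $X,Y,Z\in C^\infty(TM)$. The final ingredient is the extra symmetry of $F$ available in the twin-metric setting: because $d^\nabla J=0$, the remark preceding this Proposition gives $F(X,Y,Z)=F(Y,X,Z)$. Substituting this identity, the first and last terms of the four-term condition cancel, and what survives is exactly $F(Y,Z,X)-F(X,Z,Y)=0$, which is the asserted condition.

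I do not expect a genuine obstacle here, since the argument is a reduction rather than a fresh computation; the only points requiring care are (i) checking that $J$ is $\tilde g$-symmetric so that Proposition \ref{dd} legitimately applies with $h=\tilde g$, and (ii) correctly using the first-slot symmetry $F(X,Y,Z)=F(Y,X,Z)$ to collapse the four-term condition to the two-term one. An equally valid but longer alternative would be to bypass Proposition \ref{dd} and re-run its proof directly with $h=\tilde g$, at which stage the symmetry of $F$ would simplify each of the three blocks (the $(X,Y)$, the $(\tilde g(Z),\tilde g(W))$, and the mixed $(X,\tilde g(W))$ computations); but since Proposition \ref{dd} is already established, the reduction above is the more economical route.
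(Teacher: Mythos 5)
Your proposal is correct and is essentially the paper's own argument: the paper likewise obtains this proposition by restating Proposition \ref{dd} with $h=\tilde g$, using the symmetry $F(X,Y,Z)=F(Y,X,Z)$ (a consequence of $d^{\nabla}J=0$) to collapse the four-term condition to $F(Y,Z,X)-F(X,Z,Y)=0$. Your explicit verification that $J$ is $\tilde g$-symmetric and that $(M,\tilde g,\nabla)$ is statistical is a welcome addition of detail the paper leaves implicit.
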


Under the above hypothesis, a direct computation gives the following.
\begin{lemma}\label{xx} $F(Y,\cdot,X)-F(X,\cdot,Y)=0$ if and only if $$J((\nabla_XJ)Y)=(\nabla_X J)JY$$
or equivalently, if and only if $$J((\nabla_Y J)X)=(\nabla_{JY} J)X,$$
for any $X,Y\in C^{\infty}(TM)$.
\end{lemma}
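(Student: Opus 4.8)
The plan is to unwind the definition of $F$ in terms of the background metric $g$ and then exploit only two structural facts already established: the hypothesis $d^\nabla J=0$, which lets me interchange the two lower arguments of $\nabla J$, and the $g$-symmetry of $\nabla J$ proved in the first Lemma of this section, namely $g((\nabla_X J)Y,Z)=g((\nabla_X J)Z,Y)$. Since $\tilde g(A,B)=g(A,JB)$, I would first rewrite, for an arbitrary test field $T\in C^\infty(TM)$,
$$F(Y,T,X)=\tilde g((\nabla_Y J)T,X)=g((\nabla_Y J)T,JX),\qquad F(X,T,Y)=g((\nabla_X J)T,JY).$$
Thus the vanishing of the one-form $F(Y,\cdot,X)-F(X,\cdot,Y)$ is equivalent to the scalar identity $g((\nabla_Y J)T,JX)=g((\nabla_X J)T,JY)$ holding for all $T,X,Y$.

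Next I would apply $d^\nabla J=0$ in the forms $(\nabla_Y J)T=(\nabla_T J)Y$ and $(\nabla_X J)T=(\nabla_T J)X$, turning the displayed identity into $g((\nabla_T J)Y,JX)=g((\nabla_T J)X,JY)$. On the left I invoke the slot-symmetry of $\nabla_T J$ to obtain $g((\nabla_T J)(JX),Y)$, and on the right I use the $g$-symmetry of $J$ to obtain $g(J((\nabla_T J)X),Y)$. Since $g$ is non-degenerate and $Y$ is arbitrary, this collapses to $(\nabla_T J)(JX)=J((\nabla_T J)X)$ for all $T,X$; relabelling $T\mapsto X$ and $X\mapsto Y$ yields precisely the first stated identity $(\nabla_X J)(JY)=J((\nabla_X J)Y)$. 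Every step is an equivalence, so this settles the first claimed characterization.

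Finally, to pass to the second form I would use $d^\nabla J=0$ once more on each side of the first identity: on the left $J((\nabla_X J)Y)=J((\nabla_Y J)X)$, while on the right $(\nabla_X J)(JY)=(\nabla_{JY} J)X$. Substituting these gives $J((\nabla_Y J)X)=(\nabla_{JY} J)X$, which is the second stated form, and the equivalence is immediate as all manipulations are equalities. The computation is entirely mechanical; the only point requiring genuine care is the bookkeeping of \emph{which} symmetry is applied in \emph{which} slot—the Codazzi-type interchange coming from $d^\nabla J=0$, the slot-symmetry of $\nabla J$, and the $g$-symmetry of $J$—so that the single scalar identity is correctly massaged into a vectorial one before appealing to non-degeneracy of $g$.
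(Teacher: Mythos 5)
Your proof is correct and is precisely the ``direct computation'' the paper invokes without printing: unwinding $F$ through $\tilde g(A,B)=g(A,JB)$, interchanging slots via $d^\nabla J=0$, and then using the $g$-slot-symmetry of $\nabla J$ (the section's first lemma) together with the $g$-symmetry of $J$ before appealing to non-degeneracy. All steps are genuine equivalences, so nothing further is needed.
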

\begin{remark} The geometrical meaning of the last condition in Lemma \ref{xx} appears in Proposition 16, \cite{an}. Precisely, for the generalized complex structure
 $\hat{J}_-:=\left(
                    \begin{array}{cc}
                      J & 0 \\
                      0 & -J^* \\
                    \end{array}
                  \right)
$,
the Nijenhuis tensor field of $\hat{J}_-$ with respect to $\nabla$, $N^\nabla_{\hat{J}_-}$, coincide with the Nijenhuis tensor field of $\hat{J}_-$ with respect to the Courant bracket in $TM\oplus T^* M$, $N^c_{\hat{J}_-}$, if and only if $J((\nabla_Y J)X)=(\nabla_{JY} J)X$, for any $X,Y\in C^{\infty}(TM)$.
\end{remark}

If $J$ is a metallic structure on $M$, then we have the following.

\begin{proposition} Let $(M,g)$ be a pseudo-Riemannian manifold, let $\nabla$ be the Levi-Civita connection of $g$ and let $J$ be a $g$-symmetric $(1,1)$-tensor field on $M$ such that $J^2=pJ+qI$, for some $p,q$ real numbers with $p^2+4p\neq 0$. Then $J((\nabla_XJ)Y)=(\nabla_X J)JY$, for any $X,Y\in C^{\infty}(TM)$ if and only if $\nabla J=0$.
\end{proposition}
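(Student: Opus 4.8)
The plan is to turn the identity into a pointwise statement about the endomorphism $A_X:=\nabla_X J$ (for each fixed $X$, a $(1,1)$-tensor field) and then exploit the quadratic relation $J^2=pJ+qI$ twice. One implication is immediate: if $\nabla J=0$ then $\nabla_X J=0$ for every $X$, so both $J((\nabla_X J)Y)$ and $(\nabla_X J)(JY)$ vanish and the asserted identity holds trivially.

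For the converse, I would first record that the hypothesis $J((\nabla_X J)Y)=(\nabla_X J)(JY)$, holding for all $Y$, is precisely the commutation relation $JA_X=A_XJ$. Next I would differentiate $J^2=pJ+qI$ covariantly along $X$: the Leibniz rule gives $\nabla_X(J^2)=(\nabla_X J)J+J(\nabla_X J)=A_XJ+JA_X$, while $\nabla_X(pJ+qI)=pA_X$ since $p,q$ are constant and $\nabla I=0$. Equating the two yields $A_XJ+JA_X=pA_X$, and inserting the commutation relation $A_XJ=JA_X$ collapses this to $2JA_X=pA_X$, that is $(2J-pI)A_X=0$.

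The decisive step is to invert $2J-pI$, and here the quadratic relation is used a second time: $(2J-pI)^2=4J^2-4pJ+p^2I=4(pJ+qI)-4pJ+p^2I=(p^2+4q)I$. The non-degeneracy of the discriminant, $p^2+4q\neq0$, therefore exhibits $2J-pI$ as invertible, with inverse $\frac{1}{p^2+4q}(2J-pI)$; applying it to $(2J-pI)A_X=0$ forces $A_X=0$ for every $X$, i.e. $\nabla J=0$, completing the proof. I expect this invertibility to be the only genuine obstacle: without it the argument merely shows that $\nabla_X J$ is valued in $\ker(2J-pI)$, so the whole statement hinges on the clean pointwise identity $(2J-pI)^2=(p^2+4q)I$, which reduces the problem to elementary linear algebra.
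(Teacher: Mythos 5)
Your proof is correct and takes essentially the same route as the paper: both arguments differentiate $J^2=pJ+qI$ and use the commutation hypothesis to reduce to $(2J-pI)\bigl((\nabla_XJ)Y\bigr)=0$, then invert $2J-pI$. Your identity $(2J-pI)^2=(p^2+4q)I$ makes the invertibility explicit (the paper instead argues that $\tfrac{p}{2}$ is not an eigenvalue of $J$) and it also silently corrects a typo in the statement: the hypothesis should read $p^2+4q\neq 0$, not $p^2+4p\neq 0$.
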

\begin{proof} We have:
$$(\nabla_X J)JY-J((\nabla_X J)Y)=\nabla_X J^2Y-J(\nabla_X JY)-J(\nabla_X JY-J(\nabla_XY))=$$
$$=\nabla_X J^2Y-2J((\nabla_X J)Y)-J^2(\nabla_X Y)=(\nabla_X J^2)Y-2J((\nabla_X J)Y)=$$
$$=p(\nabla_X J)Y-2J((\nabla_X J)Y)=(pI-2J)((\nabla_X J)Y).$$
In particular, if $\frac{p}{2}$ is not an eigenvalue of $J$, or equivalently, if $p^2+4p\neq 0$, then we get $\nabla J=0$.
\end{proof}

\bigskip

\textit{Adara M. Blaga}

\textit{Department of Mathematics}

\textit{West University of Timi\c{s}oara}

\textit{Bld. V. P\^{a}rvan nr. 4, 300223, Timi\c{s}oara, Rom\^{a}nia}

\textit{adarablaga@yahoo.com}

\bigskip

\textit{Antonella Nannicini}

\textit{Department of Mathematics and Informatics "U. Dini"}

\textit{University of Florence}

\textit{Viale Morgagni, 67/a, 50134, Firenze, Italy}

\textit{antonella.nannicini@unifi.it}

\end{document}